\CompileMatrices\SelectTips{cm}{12}
\theoremstyle{plain}
\newtheorem{Thm}{\sc Theorem}[section]
\newtheorem{Theorem}[Thm]{\sc Theorem}
\newtheorem{Corollary}[Thm]{\sc Corollary}
\newtheorem*{Corollary*}{\sc Corollary}
\newtheorem{Proposition}[Thm]{\sc Proposition}
\newtheorem*{Proposition*}{\sc Proposition}
\newtheorem{Lemma}[Thm]{\sc Lemma}
\theoremstyle{definition}
\newtheorem{Definition}[Thm]{Definition}
\theoremstyle{remark}
\newtheorem{Remark}[Thm]{Remark}
\newtheorem{Example}[Thm]{Example}
\newtheorem*{Example*}{Example}
\newtheorem*{Remark*}{Remark}
\newcommand{\Ad}{\mathop{\rm Ad}}
\newcommand{\id}{{\mathop{\rm id}}}
\newcommand{\ZZ}{{\mathbb Z}}
\newcommand{\Spec}{\mathop{\rm Spec}}
\renewcommand{\O}{{\cal O}}
\newcommand{\E}{{\cal E}}
\newcommand{\F}{{\cal F}}
\newcommand{\G}{{\cal G}}
\newcommand{\fg}{{\mathfrak g}}
\newcommand{\NN}{{\mathbb N}}
\newcommand{\X}{{\cal X}}
\newcommand{\PP}{{\mathbb P}}
\newcommand{\QQ}{{\mathbb Q}}
\newcommand{\CC}{{\mathbb C}}
\newcommand{\C}{{\cal C}}
\newcommand{\ch}{\mathop{\rm ch}}
\newcommand{\chr}{\mathop{\rm char}}
\newcommand{\td}{\mathop{\rm td}}
\newcommand{\cS}{{\cal S}}
\newcommand{\cT}{{\cal T}}
\newcommand{\ti}{\tilde}
\newcommand{\Hom}{{\mathop{{\rm Hom}}}}
\newcommand{\cHom}{{\mathop{{\cal H}om}}}
\newcommand{\End}{{\mathop{{\cal E}nd}}}
\newcommand{\Vect}{{\mathop{Vect} ^s_0}}
\newcommand{\Ext}{{\mathop{{\rm E}xt}}}
\newcommand{\cExt}{{\mathop{{\cal E}xt}}}
\newcommand{\ext}{{\mathop{{\rm ext}}}}
\newcommand{\ob}{\mathop{{ob}}}
\newcommand{\Num}{\mathop{\rm Num}}
\newcommand{\num}{\mathop{\rm num}}
\newcommand{\im}{\mathop{\rm im}}
\newcommand{\coker}{\mathop{\rm coker}}
\newcommand{\mod}{{\mathop{{\rm mod}}}}
\newcommand{\GL}{\mathop{\rm GL}}
\newcommand{\Pic}{\mathop{\rm Pic}}
\begin{document}

\markboth{\rm A.\ Langer} {\rm On the S-fundamental group scheme}

\title{On the S-fundamental group scheme}
\author{Adrian Langer}
\date{\today}

\maketitle


{\sc Address:}\\
1. Institute of Mathematics, Warsaw University,
ul.\ Banacha 2, 02-097 Warszawa, Poland,\\
2. Institute of Mathematics, Polish Academy of Sciences,
ul.~\'Sniadeckich 8, 00-956 Warszawa, Poland.\\
e-mail: {\tt alan@mimuw.edu.pl}

\begin{abstract} We introduce  a new fundamental group scheme
for varieties defined over an algebraically closed (or just perfect)
field of positive characteristic and we use it to study generalization
of C. Simpson's results to positive characteristic. We also
study the properties of this group and we prove Lefschetz type
theorems.
\end{abstract}

\section*{Introduction}

A. Grothendieck as a substitute of a topological fundamental group
introduced the \'etale fundamental group, which in the complex
case is just a profinite completion of the topological fundamental
group. The definition uses all finite \'etale covers and in
positive characteristic it does not take into account inseparable
covers. To remedy the situation M. Nori introduced the fundamental
group scheme which takes into account all principal bundles with
finite group scheme structure group. In characteristic zero this
recovers the \'etale fundamental group but in general it carries
more information about the topology of the manifold. Obviously,
over complex numbers the topological fundamental group carries
much more information than the \'etale fundamental group. To
improve this situation  C. Simpson introduced in \cite{Si} the
{universal complex pro-algebraic group} (or an algebraic envelope
of  the topological fundamental group in the language of
\cite[10.24]{De}). This group carries all the information about
finite dimensional representations of the topological fundamental
group. On this group Simpson introduced  a non-abelian Hodge
structure which gives rise to a non-abelian Hodge theory.

The main aim of this paper is to generalize some of his results to
positive characteristic. As a first step to this kind of
non-abelian Hodge theory we study the quotient of the universal
complex pro-algebraic group which, in the complex case,
corresponds to the Tannakian category of  holomorphic flat bundles
that are extensions of unitary flat bundles. Via the well known
correspondence started with the work of M. S. Narasimhan and C. S.
Seshadri, objects in this category correspond to semistable vector
bundles with vanishing Chern classes.

In positive characteristic we take this as a starting point of our
theory. In particular, in analogy to \cite[Theorem 2]{Si} we prove
that strongly semistable sheaves with vanishing Chern classes are
locally free. We use this to prove that such sheaves are
numerically flat (i.e., such nef locally free sheaves whose dual
is also nef). We also prove the converse: all numerically flat
sheaves are strongly semistable and they have vanishing Chern
classes (in complex case this equivalence follows from
\cite[Theorem 1.18]{DPS}).

This motivates our definition of the S-fundamental group scheme
(see Definition \ref{Def}). Namely, we define the S-fundamental
group scheme as Tannaka dual to the neutral Tannaka category of
numerically flat sheaves. Note that in this definition we do not
need neither smoothness nor projectivity of the variety for which
we define the S-fundamental group scheme.

However, considering reflexive sheaves with vanishing Chern
classes on smooth projective varieties is sometimes much more
useful. For example, notion of strong stability can be used to
formulate some interesting restriction theorems (see Section 4)
that are used in proofs of Lefschetz type theorems. It is also of
crucial importance in several other proofs.

The S-fundamental group scheme always allows to recover Nori's
fundamental group scheme. In fact, Nori in \cite{No} considered a
closely related category of degree $0$ vector bundles whose
pull-backs by birational maps from smooth curves are semistable.
Recently, the S-fundamental scheme group was defined in the curve
case in \cite[Definition 5.1]{BPS} (in this case there are no
problems caused, e.g., by non-locally free sheaves).

If  the cotangent sheaf of the variety does not contain any
subsheaves of non-negative slope (with respect to some fixed
polarization) then in the complex case the S-fundamental group
scheme is equal to  Simpson's universal complex pro-algebraic
group (note that the corresponding non-abelian Hodge structure is
in this case trivial). In positive characteristic, under the same
assumption, we prove that the S-fundamental group scheme allows us
to recover all known fundamental groups like Deligne-Shiho's
pro-unipotent completion of the fundamental group or dos Santos'
fundamental group scheme obtained by using all $\O_X$-coherent
${\cal D}_X$-modules (or stratified sheaves). Note that in this
case we also get projective (!) moduli space structure on the
non-abelian cohomology set $H^1(\pi_1^S(X,x), \GL _k(n))$,
corresponding to the Dolbeaut moduli space (this follows from
Theorem \ref{loc-free}).

A large part of the paper is devoted to study the properties of
the S-fundamental group scheme. It satisfies the same properties
as Nori's fundamental group scheme. Many of the properties are
quite easy to prove but some as in the case of Nori's fundamental
group scheme are quite difficult. For example, the behavior under
tensor products for Nori's fundamental group scheme was studied
only in \cite{MS}. The corresponding result for the S-fundamental
group scheme uses completely different techniques and it is
subject of the second part of this paper.

One of the main results of this paper are Lefschetz type theorems
for the S-fundamental group scheme. As a corollary get the
corresponding results for Nori's (and \'etale) fundamental groups.
This corollary was proved in \cite{BH} in a much more cumbersome
way using Grothendieck's Lefschetz theorems for the \'etale
fundamental group. Our proofs are quite quick and they depend on
some vanishing of cohomology proven using the techniques described
by Szpiro in \cite{Szp}.

Our proof of the Lefschetz type theorems for the S-fundamental
group scheme is quite delicate as we need to extend vector bundles
from ample divisors and this usually involves vanishing of
cohomology that even in characteristic zero we cannot hope for
(see the last part of Section \ref{Section-p^2}). A similar
problem occured in Grothendieck's proof of Lefschetz theorems for
Picard groups. In this case the Picard scheme of a smooth surface
in $\PP ^3$ is not isomorphic to $\ZZ$ (for example for a cubic
surface) and Lefschetz theorem for complete intersection surfaces
says that the component of the numerically trivial divisors in the
Picard scheme is trivial (see \cite[Expose XI, Th\`eor\'eme
1.8]{DK}). Our theorem gives information about the Picard scheme
not only in case of hypersurfaces in projective spaces but for
ample divisors in arbitrary projective varieties (also if the
Picard scheme of the ambient variety is non-reduced). One just
needs to notice that the component of the numerically trivial
divisors in the Picard scheme is equal to the group of characters
of the S-fundamental group scheme.

In the higher rank case there also appears another problem:
extension of a vector bundle on a divisor need not be a
vector bundle. This is taken care of by Theorem \ref{loc-free}
(which partially explains why we bother with semistable sheaves
and not just numerically flat vector bundles).

In the last section we use the lemma of Deligne and Illusie to
give a quick proof of Lefschetz type theorems for the
S-fundamental group scheme for varieties which admit a lifting
modulo $p^2$.

We should note that a strong version of boundedness of semistable
sheaves (see \cite{La1} and \cite{La2}) is frequently used in
proofs in this paper (although we could do without it in many but
not all places).

\medskip
To prevent the paper to grow out of a reasonable size we decided to
skip many interesting topics. In future we plan to treat
the (full) universal pro-algebraic fundamental group and a tame version of this
group for non-proper varieties. We also plan to add some applications
to the study of varieties with nef tangent bundle (for this purpose the
results of this paper are already sufficient).

\medskip

The structure of the paper is as follows. In Section 1 we recall a
few well known results. In particular, Subsection 1.3 motivates
the results of Section 4. In Section 2 we recall some boundedness
results used in later proofs. We also use them to prove some
results on deep Frobenius descent generalizing H. Brenner's and A.
Kaid's results \cite{HL}. These results are are of independent
interest and they are not used later in the paper. In Section 3 we
prove a restriction theorem for strongly stable sheaves with
vanishing discriminant. The results of this section are used in
Sections 4, 5 and 10. In Section 4 we prove the analogue of
Simpson's theorem in positive characteristic. In Section 5 we
prove that reflexive strongly semistable sheaves with vanishing
Chern classes are numerically flat locally free sheaves. In
Section 6 we finally define the S-fundamental group scheme and we
compare it to other fundamental group schemes. In Section 7 er
study numerically flat principal bundles and we state some results
generalizing the results on the monodromy group proved in
\cite{BPS}. In Section 8 we study basic properties of the
S-fundamental group scheme. In Section 9 we prove some vanishing
theorems for the first and second cohomology groups of sheaves
associated to twists of numerically flat sheaves. Finally, in
Section 10 we prove Lefschetz type theorems for the S-fundamental
group scheme.

\medskip

After this paper was written, there appeared preprint \cite{BP} of V.
Balaji and A.J. Parameswaran. In this paper the authors introduce another
graded Tannaka category of  vector bundles with filtrations whose
quotients are degree $0$ stable, strongly semistable vector
bundles. The zeroth graded piece of their construction corresponds
to our S-fundamental group scheme. However, unlike our group
scheme their group scheme depends on the choice of polarization.

After the author send this paper to V. B. Mehta, he obtained in
return another preprint \cite{Me}. In this paper Mehta also introduces the
S-fundamental group scheme (using numerically flat bundles and
calling it the ``big fundamental group scheme''). He proves that
if $G$ is semisimple then principal $G$-bundles whose pull backs
to all curves are semistable come from a representation of the
S-fundamental group scheme (see [Theorem 5.8, loc. cit.]). He also
shows that for a smooth projective variety defined over an
algebraic closure of a finite field the S-fundamental group scheme
is isomorphic to  Nori's fundamental group scheme (see [Remark
5.11, loc. cit.]).

\subsection{Notation and conventions}

For simplicity all varieties in the paper are defined over an
algebraically closed field $k$. We could also assume that $k$ is
just a perfect field but in this case our fundamental group,
similarly to Nori's fundamental group, is not a direct
generalization of Grothendieck's fundamental group as it ignores
the arithmetic part of the group. Let us also recall that if a
variety is defined over a non-algebraically closed field $k$, then
the notions of stability and semistability can be also defined
using subsheaves defined over $k$. In case of semistability this
is equivalent to geometric semistability (i.e., we can pass to the
algebraic closure and obtain the same notion), but this is no
longer the case for stability (see \cite[Corollary 1.3.8 and
Example 1.3.9]{HL}).

We will not need to distinguish between absolute and geometric
Frobenius morphisms.

Let $E$ be a rank $r$ torsion free sheaf on a smooth
$n$-dimensional projective variety $X$ with an ample line bundle
$L$. Then one can define the \emph{slope} of $E$ by $\mu
(E)=c_1E\cdot c_1 L ^{n-1}/r$. The \emph{discriminant} of $E$ is
defined by $\Delta (E)=2rc_2(E)-(r-1)c_1^2(E)$.

One can also define a generalized slope for pure sheaves for
singular varieties but the notation becomes more cumbersome and
for simplicity of notation we restrict only to the smooth case.

Semistability  will always mean slope semistability with respect
to the considered ample line bundle (or a collection of ample line
bundles). The slope of a maximal destabilizing subsheaf of $E$ is
denoted by $\mu _{\max}(E)$ and that of minimal destabilizing
quotient by $\mu _{\min} (E)$.

In the following we identify locally free sheaves and
corresponding vector bundles.

Let us recall that an affine $k$-scheme $\Spec A$ is called
\emph{algebraic} if $A$ is finitely generated as a $k$-algebra.

In this paper all representations of groups are continuous. In
other words, all groups in the paper are pro-algebraic so we have
a structure of a group scheme and the homomorphism defining the
representation is required to be a homomorphism of group schemes.

\section{Preliminaries}

In this section we gather a few auxiliary results.

\subsection{Numerical equivalence}

Let $X$ be a smooth complete $d$-dimensional variety defined over
an algebraically closed field $k$. Then an $e$-cycle $\alpha$ on
$X$ is \emph{numerically equivalent to zero} if and only if $\int
_X\alpha \beta=0$ for all $(d-e)$-cycles $\beta$ on $X$. Let $\Num
_*X$ be the subgroup of the group of cycles $Z_*X$ generated by
cycles numerically equivalent to $0$. Then $N_*X=Z_*X/\Num_*X$ is
a finitely generated free abelian group (see \cite[Examples 19.1.4
and 19.1.5]{Fu}).

In this paper, Chern classes of sheaves will be considered only as
elements of $N_*X$.

Similarly as above  one defines the \emph{numerical Grothendieck
group} $K(X)_{\num}$ as the Grothendieck group (ring) $K(X)$ of
coherent sheaves modulo numerical equivalence, i.e., modulo the
radical of the quadratic form given by the Euler characteristic
$(a,b)\to \chi (a\cdot b)=\int _X \ch (a)\ch (b)\td (X)$. Here
$\ch : K(X)_{\num}\otimes \QQ\to N_*(X)\otimes {\QQ}$ is the map
given by the Chern character. By $\ch _i$ we denote the degree $i$
part of this map.
\medskip

The following result is well known but the author was not able to
provide a reference to its proof and hence we give it below:

\begin{Lemma} \label{obvious}
If a family of isomorphism classes of sheaves on $X$ is bounded
then the set of Chern classes of corresponding sheaves is finite.
\end{Lemma}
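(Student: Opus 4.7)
The plan is to reduce the statement to the well-known fact that in a flat family of coherent sheaves parametrized by a connected base, the class of the fibre in $K(X)_{\num}$ is constant. Once this is in hand, Chern classes in $N_\ast X$ are constant as well, and boundedness gives only finitely many strata to consider.

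More precisely, I would proceed as follows. First, unpack the definition of boundedness: there exist a $k$-scheme $S$ of finite type and a coherent sheaf $\F$ on $X\times S$ such that every sheaf in the given family is isomorphic to $\F_s:=\F|_{X\times\{s\}}$ for some closed point $s\in S$. Next, I would apply generic flatness together with noetherian induction to stratify $S$ into finitely many locally closed subschemes $S_1,\dots,S_N$ such that $\F|_{X\times S_i}$ is flat over $S_i$. Since each $S_i$ is of finite type over $k$, it has finitely many connected components; refining the stratification I may assume each $S_i$ is connected.

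Now I would argue that on each $S_i$ the class $[\F_s]\in K(X)_{\num}$ is independent of the closed point $s\in S_i$. For this, note that for an arbitrary class $b\in K(X)$ represented by a locally free sheaf, the function
\[ s\mapsto \chi(X,\F_s\otimes b) \]
is locally constant on $S_i$ (by flatness and the standard semicontinuity/constancy of Euler characteristics in flat families), hence constant on the connected $S_i$. By definition of $K(X)_{\num}$ as the quotient of $K(X)$ by the radical of the Euler pairing, this says exactly that $[\F_s]\in K(X)_{\num}$ is constant on $S_i$.

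Finally, since the Chern character $\ch\colon K(X)_{\num}\otimes\QQ\to N_\ast(X)\otimes\QQ$ is well defined, the classes $\ch_i(\F_s)\in N_\ast(X)\otimes\QQ$, and therefore the Chern classes $c_i(\F_s)\in N_\ast(X)$ (which are torsion-free and polynomially expressible in terms of the components of the Chern character), are constant on each $S_i$. Since there are only finitely many $S_i$, only finitely many Chern classes appear, which is the required finiteness. The only place that requires a little care is the passage from the constancy of Euler characteristics to constancy in $K(X)_{\num}$; once one writes the definition of numerical equivalence in $K(X)$, this is essentially a tautology, so I do not anticipate a serious obstacle.
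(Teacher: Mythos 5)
Your proposal is correct and follows essentially the same route as the paper: stratify $S$ so that $\F$ becomes flat (the paper invokes the flattening stratification where you use generic flatness plus noetherian induction, which amounts to the same thing), use local constancy of $s\mapsto\chi(\F_s\otimes b)$ to get finitely many classes in $K(X)_{\num}$, and pass to $N_*(X)$ via the Chern character together with torsion-freeness of $N_*(X)$. No gaps.
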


\begin{proof}
By definition a family is bounded if there exists a $k$-scheme $S$
of finite type and a coherent $\O_{S\times X}$-module $\F$ such
that $\{ \F_{s\times X}\} _s\in S$ contains all members of this
family. Passing to the flattening stratification of $S$ for $\F$
(see, e.g., \cite[Theorem 2.15]{HL}) we can assume that $\F$ is
$S$-flat. Let $q: S\times X\to X$ be the projection. For a flat
family $\F$ the Euler characteristic $s\to \chi ((\F\otimes
q^*\alpha)_s)$ is locally constant for all classes $\alpha \in
K(X)$. This implies that there are only finitely many classes of
$[\F_s]$ in $K(X)_{\num}$. Since $\ch : K(X)_{\num}\otimes \QQ\to
N_*(X)\otimes {\QQ}$ is an isomorphism and $N_*(X)$ is torsion
free we get the required assertion.
\end{proof}

\medskip

\subsection{Nefness}

Let us recall that a locally free sheaf $E$ on a complete
$k$-scheme is called \emph{nef} if and only if $\O_ {\PP (E)}(1)$
is nef on the projectivization $\PP (E)$ of $E$. We say that $E$
is \emph{numerically flat} if both $E$ and $E^*$ are nef.

A locally free sheaf $E$ is nef if and only if for any finite
morphism $f: C\to X$ from a smooth projective curve $C$ we have
$\mu _{\min} (f^*E)\ge 0$ (see, e.g., \cite[Theorem 2.1 and
p.~437]{Ba}). Hence, quotients of a nef bundle are nef.

Let $f: X\to Y$ be a surjective morphism of complete
$k$-varieties. Then $E$ on $Y$ is nef if and only if $f^*E$ is
nef.  Similarly, since pull back commutes with dualization, $E$ is
numerically flat if and only if $f^*E$ is numerically flat.

\medskip

\subsection{Flatness and complex fundamental groups}

Let us recall that a {\sl flat bundle} on a complex manifold is a
$\C^{\infty}$ complex vector bundle together with a flat
connection. One can also look at it as a complex representation of
the topological fundamental group $\pi_1(X,x)$ or a bundle
associated to a local system of complex vector spaces. We say that
a flat bundle is {\sl unitary} if it is associated to a
representation that factors through the unitary group. For unitary
flat bundles (and extensions of unitary flat bundles) the
holomorphic structure is preserved in the identification of flat
bundles and Higgs bundles.

The following theorem was proven in the curve case by
Narasimhan--Seshadri, and then generalized by Donaldson,
Uhlenbeck--Yau and Mehta--Ramanathan to higher dimension:

\begin{Theorem} \textup{(see \cite[Theorem 5.1]{MR})}
Let $X$ be a smooth $d$-dimensional complex projective manifold
with an ample divisor $H$. Let $E$ be a vector bundle on $X$ with
$c_1(E)=0$ and $c_2(E)H^{d-2}=0$. Then $E$ comes from an
irreducible unitary representation of $\pi _1(X,x)$ if and only if
$E$ is slope $H$-stable.
\end{Theorem}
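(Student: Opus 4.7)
The plan is to establish the two directions separately, the harder one being that a slope $H$-stable bundle with vanishing Chern classes carries a flat unitary structure.

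For the ``only if'' direction, suppose $E$ comes from an irreducible unitary representation $\rho : \pi_1(X,x) \to U(n)$. Then $E$ acquires a flat Hermitian metric whose curvature vanishes identically. This implies $c_1(E) = 0$ and $c_2(E) = 0$ in de Rham cohomology, so in particular $c_2(E) H^{d-2} = 0$. Because a flat bundle is a direct sum of stable flat bundles corresponding to the isotypic components of $\rho$, and each summand has slope zero, $E$ is polystable of slope $0$. The irreducibility of $\rho$ is then equivalent to the non-existence of a proper flat subbundle, which in turn (using again that any coherent subsheaf of a unitary flat bundle of the same slope is a flat subbundle after saturation) is equivalent to slope $H$-stability of $E$.

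For the ``if'' direction, I would invoke the Donaldson--Uhlenbeck--Yau theorem (extended to higher dimension by Uhlenbeck--Yau and, in the projective setting, by Mehta--Ramanathan), which asserts that a slope $H$-stable bundle $E$ admits a Hermitian--Einstein metric $h$: that is, the curvature $F_h$ satisfies $\sqrt{-1}\Lambda F_h = \lambda \cdot \id_E$, where the Einstein constant $\lambda$ is a multiple of $\mu(E) = 0$. Hence the mean curvature vanishes. The next step is to use the well-known pointwise Bogomolov-type inequality for Hermitian--Einstein bundles,
\[
\int_X \bigl(2r c_2(E) - (r-1) c_1(E)^2\bigr) \wedge \omega^{d-2} \;\ge\; 0,
\]
with equality precisely when the traceless part of $F_h$ vanishes, i.e.\ when $(E,h)$ is projectively flat. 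By hypothesis $c_1(E) = 0$ and $c_2(E) H^{d-2} = 0$, so the left-hand side vanishes (after choosing $\omega$ in the class of $H$), forcing $(E,h)$ to be projectively flat. Combining projective flatness with the additional condition $c_1(E) = 0$ (which makes the induced connection on $\det E$ trivial after a unitary change of trivialization), we conclude that $(E,h)$ is genuinely flat and unitary, hence associated to a unitary representation of $\pi_1(X,x)$. Stability of $E$ forces this representation to be irreducible, for any invariant subspace would produce a flat, hence holomorphic, subbundle of the same slope.

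The main obstacle is the existence of the Hermitian--Einstein metric in the ``if'' direction: the Donaldson--Uhlenbeck--Yau theorem is a genuinely transcendental input, proved via the continuity method (or Yang--Mills heat flow) on the space of Hermitian metrics, and there is no purely algebraic substitute in characteristic zero. The remaining steps---the Bogomolov inequality for Hermitian--Einstein bundles and passage from projective flatness plus $c_1=0$ to a unitary representation---are standard curvature computations once the metric is in hand. An alternative route, closer to the Mehta--Ramanathan approach, would be to restrict $E$ to a general high-degree complete intersection curve where stability is preserved (by the Mehta--Ramanathan restriction theorem) and apply the original Narasimhan--Seshadri theorem, then glue the resulting unitary structures using the vanishing of $c_2(E) H^{d-2}$; but this gluing is precisely what the Hermitian--Einstein approach handles uniformly in one step.
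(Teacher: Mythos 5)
This theorem is not proved in the paper at all: it is quoted as background from Mehta--Ramanathan \cite[Theorem 5.1]{MR}, so there is no internal argument to compare against. Your sketch is the standard differential-geometric proof and is correct in outline. The one genuinely different feature worth flagging is that the cited source does \emph{not} argue via a Hermitian--Einstein metric on $X$ itself: Mehta and Ramanathan deduce the higher-dimensional statement from the curve case (Narasimhan--Seshadri) by means of their restriction theorem for stable sheaves on general complete intersection curves, together with an argument identifying $\pi_1$ of such a curve with $\pi_1(X)$ via the Lefschetz theorem --- essentially the ``alternative route'' you mention in your last paragraph. Your primary route (Donaldson--Uhlenbeck--Yau existence of a Hermitian--Einstein metric, then L\"ubke's inequality $\int_X (2rc_2-(r-1)c_1^2)\wedge\omega^{d-2}\ge 0$ with equality forcing projective flatness, then $c_1=0$ forcing honest flatness) is the other standard proof and buys a statement that does not pass through curves at all. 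Two small points to tighten: first, since the paper takes Chern classes in $N_*X$, one should note that $c_1(E)=0$ numerically implies $c_1(E)=0$ in $H^2(X,\mathbb{R})$ (numerical and real homological equivalence agree for divisor classes), which is what the curvature argument needs; second, in the last step the projective-flatness form $\alpha$ with $F_h=\alpha\otimes\id_E$ is a closed $(1,1)$-form satisfying $\Lambda\alpha=0$ and representing a multiple of $c_1(E)=0$, hence is primitive and harmonic and therefore vanishes --- this is cleaner than arguing on $\det E$. Neither point is a gap, only a matter of making the sketch precise.
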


Later C. Simpson generalized this statement to correspondence
between flat bundles and semistable Higgs bundles. As a special
case he obtained the following result:

\begin{Theorem} \label{Simp} \textup{(\cite[Corollary 3.10 and the following
remark]{Si})}  There exists an equivalence of categories between
the category of holomorphic flat bundles which are extensions of
unitary flat bundles and the category of $H$-semistable bundles
with $ch_1\cdot H^{d-1}=ch_2\cdot H^{d-2}=0$. In particular, the
latter category does not depend on the choice of ample divisor
$H$.
\end{Theorem}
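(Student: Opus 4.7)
The plan is to derive this theorem as a special case of Simpson's non-abelian Hodge correspondence \cite{Si}, combined with the Narasimhan--Seshadri--Donaldson--Uhlenbeck--Yau--Mehta--Ramanathan theorem stated above to handle the stable pieces. Simpson's general correspondence identifies the category of flat bundles on $X$ with the category of $H$-semistable Higgs bundles $(E,\theta)$ satisfying $ch_1(E)\cdot H^{d-1}=ch_2(E)\cdot H^{d-2}=0$. Under this correspondence, the full subcategory cut out by $\theta=0$ on the Higgs side is exactly the holomorphic bundles appearing in the statement, and the task is to identify the matching full subcategory on the flat side as the extensions of unitary flat bundles.

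For the forward direction, given a flat bundle $F$ which is an iterated extension of unitary flat bundles $U_1,\dots,U_m$, each $U_i$ underlies an $H$-polystable bundle of slope $0$ with $c_2\cdot H^{d-2}=0$ by the cited Narasimhan--Seshadri-type theorem. Since $H$-semistability is closed under extensions of same-slope objects, and $ch_1\cdot H^{d-1}$ and $ch_2\cdot H^{d-2}$ are additive on short exact sequences, the underlying holomorphic bundle of $F$ lies in the target category.

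For the converse, let $E$ be $H$-semistable with $ch_1(E)\cdot H^{d-1}=ch_2(E)\cdot H^{d-2}=0$, and take a Jordan--H\"older filtration with $H$-stable graded quotients $E_i$ of slope $0$. Combining the Bogomolov--Gieseker inequality $\Delta(E_i)\cdot H^{d-2}\ge 0$ for each semistable factor, the Hodge index theorem (which forces $c_1(E_i)^2\cdot H^{d-2}\le 0$ under the constraint $c_1(E_i)\cdot H^{d-1}=0$), additivity on the filtration, and the assumed vanishing of $ch_2(E)\cdot H^{d-2}$, one deduces that $c_1(E_i)$ is numerically trivial and $c_2(E_i)\cdot H^{d-2}=0$ for every $i$. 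The Narasimhan--Seshadri-type theorem then equips each $E_i$ with an irreducible unitary flat structure, and it remains to produce a compatible flat structure on the whole of $E$ exhibiting it as an iterated extension of the unitary $E_i$ inside the category of flat bundles.

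The main obstacle, which I would import from \cite{Si}, is precisely this lifting step. The natural forgetful functor from flat-bundle extensions to holomorphic extensions is neither injective nor surjective on isomorphism classes in general, so one needs a mechanism to pin down a specific flat extension of the unitary $E_i$ whose underlying holomorphic bundle reproduces $E$. Controlling this discrepancy requires the Hermitian--Einstein/Higgs-bundle machinery of Simpson: the existence of a harmonic metric on $E$ (viewed as a Higgs bundle with trivial Higgs field) provides a canonical flat connection extending the unitary structures on the graded pieces. The final independence statement then follows formally, since the source category of extensions of unitary flat bundles is defined without any reference to a polarization, forcing the target category to be $H$-independent as well.
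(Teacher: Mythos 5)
The paper offers no proof of this statement at all: it is quoted directly from Simpson (\cite[Corollary 3.10 and the following remark]{Si}), so there is no internal argument to compare yours against. Your outline is a faithful reconstruction of Simpson's own derivation --- restrict the nonabelian Hodge correspondence to the zero-Higgs-field locus, use Bogomolov plus the Hodge index theorem to force the Jordan--H\"older factors to have numerically trivial $c_1$ and vanishing $c_2\cdot H^{d-2}$, and invoke the Narasimhan--Seshadri--Donaldson--Uhlenbeck--Yau--Mehta--Ramanathan theorem on the stable pieces --- and, like the paper, it ultimately defers the hard step to \cite{Si}. One small imprecision: a strictly semistable, non-polystable bundle admits no Hermitian--Einstein (harmonic) metric, so Simpson's passage from polystable objects to their iterated extensions is not a direct harmonic-metric construction on $E$ itself but a categorical one (the equivalence on simple objects is shown to be compatible with extension classes, and every object of either category is an iterated extension of simples); since you explicitly import this step from \cite{Si}, this does not constitute a gap.
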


Let us fix a point $x\in X$. Then the above category of
$H$-semistable bundles $E$ with $ch_1(E)H^{d-1}=ch_2(E)H^{d-2}=0$
can be given the structure of a neutral Tannakian category (cf.
\cite[p. 70]{Si}) with a fibre functor defined by sending bundle
$E$ to its fibre $E(x)$.

\begin{Definition}
The affine group scheme over $\CC$ corresponding to the above
Tannakian category is called the \emph{S-fundamental group scheme}
and denoted by $\pi_1^S(X,x)$.
\end{Definition}

\medskip
In \cite[Section 5]{Si} Simpson defined the \emph{universal
complex pro-algebraic group} $\pi_1^a(X,x)$ as the inverse limit
of the directed system of representations $\rho : \pi_1(X,x)\to G$
for complex algebraic groups $G$, such that the image of $\rho$ is
Zariski dense in $G$ (in the language of \cite[10.24]{De}
$\pi_1^a(X,x)$ is an \emph{algebraic envelope} of  the topological
fundamental group). This group is Tannaka dual to the neutral
Tannaka category of semistable Higgs bundles with vanishing
(rational) Chern classes (and with the obvious fibre functor of
evaluation at $x$). Therefore by \cite[Proposition 2.21 (a)]{DM}
we get the following corollary which solves the problem posed in
\cite[Remark 5.2]{BPS}:

\begin{Corollary} \label{complex-surjection}
We have a surjection $\pi_1^a(X,x)\to \pi_1^S(X,x)$ of
pro-algebraic groups (or, more precisely, a faithfully flat
morphism of complex group schemes).
\end{Corollary}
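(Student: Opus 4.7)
The plan is to apply the Tannakian formalism, specifically \cite[Proposition 2.21(a)]{DM}, which asserts that an exact tensor functor between neutral Tannakian categories induces a faithfully flat homomorphism on Tannaka duals if and only if it is fully faithful, intertwines the fibre functors, and has image closed under taking subobjects. I would exhibit the Tannakian category $\C_S$ defining $\pi_1^S(X,x)$---namely $H$-semistable bundles with $\ch_1\cdot H^{d-1}=\ch_2\cdot H^{d-2}=0$---as such a subcategory of the Tannakian category $\C_a$ defining $\pi_1^a(X,x)$---namely semistable Higgs bundles with vanishing rational Chern classes---via the ``trivial Higgs field'' functor
\[
\omega\colon\C_S\longrightarrow\C_a,\qquad E\longmapsto (E,0).
\]

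First I would verify that $\omega$ is well defined and that it satisfies all the routine hypotheses. Semistability of $(E,0)$ as a Higgs bundle is immediate because every Higgs subsheaf $(F,\theta_F)\subseteq (E,0)$ has $\theta_F=0$, hence is an ordinary subsheaf of $E$, so the slope inequality reduces to the one for $E$. Vanishing of all rational Chern classes of $E$ (not merely of $\ch_1\cdot H^{d-1}$ and $\ch_2\cdot H^{d-2}$) follows from Theorem \ref{Simp}: $E$ corresponds to an extension of unitary flat bundles, whose rational Chern character is trivial. Full faithfulness, exactness, compatibility with tensor products, and compatibility with the fibre functors are then tautological: a Higgs morphism $(E_1,0)\to (E_2,0)$ is just a bundle map since the compatibility with $\theta$ is vacuous, $\omega$ clearly preserves kernels, cokernels and tensor products, and both fibre functors send $E$ to its fibre $E_x$ at $x$.

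The main obstacle is closure under subobjects. A subobject of $(E,0)$ in $\C_a$ has the form $(F,\theta_F)$ with $F\subseteq E$ saturated and $\theta_F=\theta|_F=0$, so it equals $(F,0)$; the condition of being a subobject of $\C_a$ forces $F$ to be semistable with vanishing rational Chern classes. The one subtlety is that $F$ must additionally be locally free in order to land in $\C_S$, and this again follows from Theorem \ref{Simp} applied to $(F,0)$, since the corresponding extension of unitary flat bundles is automatically locally free. Hence $(F,0)=\omega(F)$ with $F\in\C_S$, so $\omega$ verifies the hypotheses of \cite[Proposition 2.21(a)]{DM}, yielding the desired faithfully flat morphism $\pi_1^a(X,x)\to\pi_1^S(X,x)$.
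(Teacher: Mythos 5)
Your proposal is correct and is essentially the paper's argument: the paper likewise identifies $\pi_1^a(X,x)$ as the Tannaka dual of semistable Higgs bundles with vanishing rational Chern classes, views the category defining $\pi_1^S(X,x)$ as the full subcategory of Higgs bundles with zero Higgs field (closed under subobjects), and concludes by citing \cite[Proposition 2.21(a)]{DM}. You have merely spelled out the routine verifications that the paper leaves implicit.
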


In general, the surjection $\pi_1^a(X,x)\to \pi_1^S(X,x)$ is not
an isomorphism. For example, it is not an isomorphism for all
curves of genus $g\ge 2$ because $\O_C\oplus\omega_C$ with the
Higgs field given by the identity on $\omega_C$ is Higgs
semistable but not semistable (after twisting by an appropriate
line bundle this gives a representation of $\pi_1^a(X,x)$ not
coming from $\pi_1^S(X,x)$).

If $\mu_{\max} (\Omega _X)<0$ then $\pi_1^a(X,x)\to \pi_1^S(X,x)$
is an isomorphism. This follows from the fact that if $\mu_{\max}
(\Omega _X)<0$ then all (Higgs) semistable Higgs bundles have
vanishing Higgs field and they are semistable in the usual sense.
In fact, $\pi_1^a(X,x)$ and $\pi_1^S(X,x)$ are both zero by the
following lemma:

\begin{Lemma}
If $X$ is a complex manifold with $\mu _{\max} (\Omega_X)<0$ then
$\pi_1^a(X,x)=0$.
\end{Lemma}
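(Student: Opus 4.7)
The plan is to reduce the vanishing of $\pi_1^a(X,x)$ to the triviality of a Tannakian category of semistable vector bundles, and to deduce the latter by combining Hodge-theoretic vanishings with a rigidity argument that forbids stable bundles of higher rank with vanishing Chern classes. By Theorem \ref{Simp}, $\pi_1^a(X,x)$ is Tannaka dual to the category of semistable Higgs bundles on $X$ with vanishing rational Chern classes, and under the hypothesis $\mu_{\max}(\Omega_X)<0$ the argument from the paragraph preceding the lemma shows that every such Higgs bundle has vanishing Higgs field. Thus it suffices to prove that every semistable vector bundle $E$ on $X$ with $\ch_1(E)\cdot H^{d-1}=\ch_2(E)\cdot H^{d-2}=0$ is isomorphic to $\O_X^r$ where $r=\mathrm{rank}(E)$.

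The hypothesis supplies the Hodge-theoretic input. In characteristic zero, tensor products of semistable sheaves are semistable, so $\mu_{\max}(\Omega_X^{\otimes p})=p\mu_{\max}(\Omega_X)<0$ for all $p\ge1$, and the $p$-th exterior power $\Omega_X^p$, being a direct summand of $\Omega_X^{\otimes p}$ in characteristic zero, inherits the same strict bound on $\mu_{\max}$. Since $\O_X$ has slope zero, no nonzero morphism $\O_X\to\Omega_X^p$ can exist, hence $H^0(X,\Omega_X^p)=0$ for $p\ge1$; Hodge symmetry on the K\"ahler manifold $X$ then yields $H^p(X,\O_X)=0$ for all $p\ge1$. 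In particular $\Pic^0(X)=0$, so every line bundle with vanishing first Chern class is trivial.

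Granted in addition that no stable vector bundle of rank $\ge 2$ with vanishing Chern classes exists on $X$, the conclusion follows without further induction. Indeed, for a semistable $E$ as above, the Jordan--H\"older factors are stable of slope zero and (using Bogomolov's inequality combined with the Hodge index-type estimate $c_1(E_i)^2\cdot H^{d-2}\le 0$ for each factor $E_i$) themselves have vanishing Chern classes; by the assumed non-existence, each factor is of rank one, hence trivial via $\Pic^0(X)=0$. So $E$ is an iterated extension of copies of $\O_X$, and since $H^1(X,\O_X)=0$ all such extensions split, giving $E\cong\O_X^r$ as required.

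The main obstacle is the assumption just used: ruling out stable bundles of rank $\ge 2$ with vanishing Chern classes on $X$. To handle this I would invoke the Kobayashi--Hitchin correspondence: such an $E$ admits a Hermite--Einstein metric, and L\"ubke's inequality (an equality under our vanishing conditions) together with $c_1(E)=0$ forces the induced connection to be genuinely flat, presenting $E$ as the bundle associated to an irreducible unitary representation $\rho\colon\pi_1(X)\to U(r)$. One should then use the hypothesis $\mu_{\max}(\Omega_X)<0$ to establish rigidity of $\rho$ via Simpson's Hodge decomposition for unitary local systems together with the slope vanishings of the second paragraph, and ultimately a contradiction. This final step is the most delicate part of the proof and is where the strength of the hypothesis must be fully exploited.
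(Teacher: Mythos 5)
Your proposal has a genuine gap at exactly the point you flag as ``the most delicate part'': you never actually rule out stable bundles of rank $\ge 2$ with vanishing Chern classes, i.e.\ nontrivial irreducible unitary representations of $\pi_1(X,x)$. The rigidity you propose to extract from the hypothesis is real (for a unitary flat bundle $E$ one has $H^1(X,\End E)\cong H^{1,0}\oplus H^{0,1}$ and $H^0(X,\End E\otimes\Omega_X)=\Hom((\End E)^*,\Omega_X)=0$ since $\End E$ is semistable of slope $0$ and $\mu_{\max}(\Omega_X)<0$), but rigidity of $\rho$ is not a contradiction: rigid nontrivial unitary local systems exist on plenty of manifolds, so no contradiction follows from $H^1(X,\End E)=0$ alone. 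As written, the argument does not close.

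The paper closes this gap by a different and much shorter route, which also makes your reduction through semistable bundles unnecessary. From $\mu_{\max}(\Omega_X)<0$ one gets $h^0(X,\Omega_X^i)=0$ for $i>0$, hence by Hodge symmetry $h^i(X,\O_X)=0$ and $\chi(X,\O_X)=1$ (this much you have). Now for any finite \'etale cover $\pi\colon Y\to X$ one has $\Omega_Y=\pi^*\Omega_X$, so the same hypothesis holds on $Y$ and $\chi(Y,\O_Y)=1$; but $\chi(Y,\O_Y)=\deg\pi\cdot\chi(X,\O_X)$ forces $\deg\pi=1$. Thus the \'etale fundamental group is trivial, i.e.\ $\pi_1(X,x)$ has no nontrivial finite quotients. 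By Malcev's theorem a finitely generated linear group is residually finite, so any nontrivial representation $\pi_1(X,x)\to G$ into a complex algebraic group would produce a nontrivial finite quotient --- a contradiction. This kills \emph{all} algebraic representations at once, irreducible unitary or not, and is the step your proof is missing. If you want to salvage your approach, note that the image of your $\rho$ in $U(r)$ is a finitely generated linear group, and the same Malcev plus \'etale-triviality argument is what would finish it; but then the detour through the Kobayashi--Hitchin correspondence buys you nothing.
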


\begin{proof}
By assumption $h^i (X, \O_X)=h^0(X, \Omega_X^i)=0$ for $i>0$.
Therefore $\chi (X, \O_X)=1$. Let $\pi : Y\to X$ be an \'etale
cover. Then $\mu_{\max} (\Omega _Y)<0$ (because $\Omega _Y=\pi ^*
\Omega _X $) so $\chi (Y, \O_Y)=1$. But $\chi (Y, \O_Y)=\deg \pi
\cdot \chi (X, \O_X)$ so $\pi$ is an isomorphism. This implies
that the \'etale fundamental group of $X$ is trivial. But by
Malcev's theorem a finitely generated linear group is residually
finite so any non-trivial representation  $\pi _1(X,x)\to G$ in an
algebraic complex affine group gives rise to some non-trivial
representation in a finite group. Therefore $\pi_1^a(X,x)$ is also
trivial.
\end{proof}

\medskip

Note that assumption immediately implies that $H^0(X, \Omega
_X^{\otimes m})=0$ for $m>0$. There is a well-known Mumford's
conjecture (see, e.g., \cite[Chapter IV, Conjecture 3.8.1]{Ko})
saying that in this case $X$ should be rationally connected. Since
rationally connected complex manifolds are simply connected we
expect that all varieties in the lemma are simply connected.

\section{Deep Frobenius descent in higher dimensions}

The aim of this section is to recall some boundedness results used
later in several proofs, and to generalize some results of H.
Brenner and A. Kaid \cite{BK}  to higher dimensions.

\medskip

Let $f:\X\to S$ be a smooth projective morphism of relative dimension
$d\ge 1$ of schemes of finite type over a fixed noetherian ring $R$.
Let $\O _{\X/S}(1)$ be an $f$-very ample line bundle on $\X$.
Let $\cT (r,c_1,\Delta;\mu_{\max})$ be the family of isomorphism classes of sheaves $E$ such that
\begin{enumerate}
\item $E$ is a rank $r$ reflexive sheaf on a fibre $\X_s$ over some point $s\in S$.
\item Let $H_s$ be some divisor corresponding to the restriction of $\O_{\X /S}(1)$ to $\X_s$.
Then $c_1(E)H_s^{d-1}=c_1$ and $(\Delta (E)-(c_1(E)-r/2K_X ) ^2)
H_s^{d-2}\le \Delta$.
\item $\mu_{\max}(E)\le \mu_{max}$.
\end{enumerate}

\medskip

The following theorem is a special case of \cite[Theorem
3.4]{La2}. The only difference is that we allow mixed
characteristic. The proof of the theorem holds in this more
general case because the dependence on the characteristic is very
simple (see the proof of \cite[Theorem 4.4]{La1}).

\begin{Theorem} \label{refl-bound}
The family $\cT _{\X/S }(r,c_1,\Delta;\mu_{\max})$ is bounded. In
particular, the set of Hilbert polynomials of sheaves in
$\cT_{\X/S}(r,c_1,\Delta;\mu_{\max})$ is finite. Moreover, there
exist polynomials $P_{\X, S}$, $Q_{\X /S}$ and $R_{\X/S}$ such
that for any $E\in \cT_{\X/S}(r,c_1,\Delta;\mu_{\max})$ we have:
\item{(1)}
$E(m)$ is $m$-regular for $m\ge P_{\X/S}(r, c_1, \Delta,\mu_{\max})$,
\item{(2)}
$H^{1}(X, E(-m))=0$ for  $m\ge Q_{\X/S}(r, c_1,\Delta ,\mu_{\max})$,
\item{(3)}
$h^1 (X, E(m))\le R_{X/S}(r, c_1, \Delta ,\mu_{\max})$ for all $m$.
\end{Theorem}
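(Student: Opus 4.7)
The plan is to port the proof of \cite[Theorem 3.4]{La2} to the mixed-characteristic relative setting $f : \X \to S$, using the fact that the estimates in \cite{La1} and \cite{La2} depend on the residue characteristic in a controlled way. First I would apply Noetherian induction on $S$, stratifying so that on each stratum the residue characteristic and the numerical invariants of $\Omega_{\X/S}$ (in particular $\mu_{\max}(\Omega_{\X_s})$) are bounded; it then suffices to prove the statement over each such stratum and combine.

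Over a fixed stratum, the main input is Langer's Bogomolov-type inequality for reflexive sheaves in positive characteristic, which combined with the hypotheses on $\Delta(E)$ and $\mu_{\max}(E)$ forces $\mu_{\min}(E)$ to be bounded below in terms of $(r, c_1, \Delta, \mu_{\max})$ and the residue characteristic. Hence the slopes of the Harder--Narasimhan filtration of $E$ lie in a bounded interval. By Kleiman's boundedness criterion this yields an effective bound on $h^0(\X_s, E(m))$ and hence boundedness of $\cT_{\X/S}(r, c_1, \Delta; \mu_{\max})$; finiteness of the set of Hilbert polynomials then follows from Lemma \ref{obvious}.

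For the polynomial bounds $P$, $Q$, $R$, I would induct on the relative dimension $d$. A general complete intersection curve in $\X_s$ inherits controlled slopes via the Mehta--Ramanathan / Langer restriction theorem, so Mumford's regularity lemma reduces $m$-regularity (item 1) and the vanishing $H^1(\X_s, E(-m))=0$ (item 2) to the curve case, where a Castelnuovo-type bound produces explicit polynomial estimates in the slope data. Item (3) then follows because, by (1) and (2), $h^1(\X_s, E(m))$ can be non-zero only for $m$ in an explicitly bounded range, and on a bounded family the dimension of cohomology at any fixed twist is uniformly bounded (standard flatness / semicontinuity).

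The main obstacle is verifying that all the constants --- in the restriction theorem, in Bogomolov's inequality, and in the inductive passage from curves through surfaces to higher dimension --- depend polynomially on the invariants $(r, c_1, \Delta, \mu_{\max})$ and, crucially, uniformly as $s$ ranges over $S$. This is precisely the content of the remark referring to \cite[Theorem 4.4]{La1}: the $p$-dependence of the inequalities is sufficiently transparent that the equicharacteristic proof carries over verbatim, yielding polynomials $P_{\X/S}$, $Q_{\X/S}$, $R_{\X/S}$ valid over all of $S$.
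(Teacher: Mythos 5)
The paper does not actually prove this theorem: it simply declares it to be a special case of \cite[Theorem 3.4]{La2}, with the single remark that the proof carries over to mixed characteristic because ``the dependence on the characteristic is very simple (see the proof of \cite[Theorem 4.4]{La1}).'' Your proposal is essentially a reconstruction of Langer's cited proof (Bogomolov-type inequality to control the HN slopes, Kleiman's criterion via restriction to complete intersection curves, induction on dimension for the effective regularity bounds), and as such it is in the same spirit as what the paper relies on.

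One step in your write-up, however, would fail as literally stated: you propose stratifying $S$ so that ``the residue characteristic \dots\ [is] bounded'' on each stratum. Since $S$ is of finite type over a noetherian ring that may dominate $\Spec \ZZ$, its points can have arbitrarily large residue characteristic, and no finite stratification can bound it. The correct mechanism --- which your final paragraph does identify --- is that the characteristic enters Langer's estimates only through correction terms of the shape $C(r)/p$ or $1/(p-1)$ coming from the behaviour of slopes under Frobenius pull-back, and these are bounded \emph{uniformly} in $p$ (in fact they improve as $p$ grows, and vanish in characteristic zero). So the equicharacteristic argument applies verbatim fibre by fibre with constants independent of the residue characteristic; no stratification by characteristic is needed or available. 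Stratifying to control $\mu_{\max}(\Omega_{\X_s})$ and the numerical invariants of the fibres is fine and is indeed how one gets uniformity over $S$. With that correction, your outline matches the intended argument.
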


\begin{Example}
Let $C$ be a smooth projective curve of genus $g\ge 1$. Let
$p_1,p_2$ denote projections  of $C\times C$ on the corresponding
factors. Let us fix a point $x\in C$  and put $H=p_1^*x+p_2^*x$.
Let $\Delta\subset C\times C$ be the diagonal. Finally, set
$L_n=\O_{C\times C}(n(H-\Delta))$. Then $c_1(L_n)H=0$ and $\Delta
(L_n)=0$ but the family $\{L_n\} _{n\in \ZZ}$ is not bounded. This
shows that in the definition of the family $\cT
(r,c_1,\Delta;\mu_{\max})$ we cannot replace the bound on $(\Delta
(E)-(c_1(E)-r/2K_X ) ^2) H^{d-2}$ with the bound on $\Delta
(E)H^{n-2}$ as the family need not be bounded.
\end{Example}

The following corollary generalizes \cite[Lemma 3.2]{BK}:

\begin{Corollary} \label{const}
There exists some constant $c=c(\X /S, r,c_1, \Delta ;\mu_{\max})$
such that for any (possibly non-closed) point $s\in S$ the number
of reflexive sheaves $E$ of rank $r$ with fixed
$c_1(E)H^{d-1}=c_1$, $(\Delta (E) -(c_1(E)-r/2K_X ) ^2)H^{d-2}\le
\Delta$ and $\mu_{\max}(E)\le \mu_{max}$ is bounded from above by
$|k(s)|^c$.
\end{Corollary}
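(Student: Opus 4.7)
The plan is to parametrise the reflexive sheaves in $\cT_{\X/S}(r,c_1,\Delta;\mu_{\max})$ on the fibre $\X_s$ by $k(s)$-points of a finite collection of Quot schemes, and then count the points.

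By Theorem \ref{refl-bound} the family $\cT_{\X/S}(r,c_1,\Delta;\mu_{\max})$ is bounded, so only finitely many Hilbert polynomials $P_1,\ldots,P_t$ occur in it, and one can find an integer $m$, depending only on $(\X/S,r,c_1,\Delta,\mu_{\max})$, such that for every $E$ in the family $E(m)$ is globally generated with $h^0(\X_s,E(m))=P_i(m)=:N_i$, where $P_i$ is the Hilbert polynomial of $E$. Choosing a $k(s)$-basis of $H^0(\X_s,E(m))$ produces a surjection $\O_{\X_s}(-m)^{N_i}\twoheadrightarrow E$, giving a $k(s)$-point of the relative Quot scheme $Q_i:=\mathrm{Quot}_{\X/S}(\O_{\X/S}(-m)^{N_i},P_i)$ lying over $s$. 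Different choices of basis yield isomorphic sheaves but different Quot points, so the number of isomorphism classes in the family over $\X_s$ is at most $\sum_{i=1}^{t}|(Q_i)_s(k(s))|$.

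By Grothendieck's theorem each $Q_i$ is a projective $S$-scheme of finite type, hence embeds as a closed subscheme of some $\PP^{M_i}_S$ with $M_i$ depending only on $(\X/S,r,c_1,\Delta,\mu_{\max})$. Therefore $(Q_i)_s\hookrightarrow\PP^{M_i}_{k(s)}$, and if $k(s)$ is finite of cardinality $q$ the crude estimate
\[
|(Q_i)_s(k(s))|\le|\PP^{M_i}(k(s))|=\tfrac{q^{M_i+1}-1}{q-1}\le q^{M_i+1}
\]
applies; when $k(s)$ is infinite the asserted inequality $\le |k(s)|^c$ is vacuous. Summing over $i$ and taking $c:=\max_i(M_i+1)+\lceil\log_2 t\rceil$ (so that $t\cdot q^{\max_i(M_i+1)}\le q^c$ for $q\ge 2$) delivers the required bound.

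The only genuine difficulty in this argument is the uniformity required in the first step --- that a single integer $m$ and a single finite set of Hilbert polynomials $\{P_1,\ldots,P_t\}$ suffice for all $s\in S$ simultaneously, with the constants controlled solely by $(\X/S,r,c_1,\Delta,\mu_{\max})$. This is precisely what the strong boundedness statement of Theorem \ref{refl-bound} provides; once it is in hand, the remainder is a routine point count on projective schemes of finite type.
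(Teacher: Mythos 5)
Your argument is correct, but it proves the corollary by a genuinely different mechanism than the paper. You parametrise the sheaves by $k(s)$-points of finitely many relative Quot schemes $\mathrm{Quot}_{\X/S}(\O_{\X/S}(-m)^{N_i},P_i)$, invoke Grothendieck's projectivity theorem to embed each of these into some $\PP^{M_i}_S$ uniformly over $S$, and then count rational points of projective space. The paper instead follows the Brenner--Kaid argument it is generalizing: after the same first step (a uniform $m$ with $E(m)$ globally generated by $a=P(E)(m)$ sections), it applies Theorem \ref{refl-bound} a \emph{second} time, to the kernel $E'$ of $\O_{X_s}(-mH_s)^{a}\to E$ (whose Hilbert polynomial and $\mu_{\max}$ are again controlled), to present every $E$ as the cokernel of a map $\O_{X_s}(-m'H_s)^{b}\to \O_{X_s}(-mH_s)^{a}$; such maps are $a\times b$ matrices of sections of $\O_{X_s}((m'-m)H_s)$, giving the completely explicit constant $c=ab\chi(\O_X((m'-m)H))$. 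Your route avoids the second boundedness step and the resolution entirely, at the price of a less explicit constant: $M_i$ is in principle computable from the Grassmannian embedding in Grothendieck's construction, but you do not (and need not) make it effective, whereas the paper's two-step resolution yields $c$ directly in terms of Hilbert polynomial values. Both proofs hinge on the same uniformity input from Theorem \ref{refl-bound}, which you correctly identify as the only delicate point; the rest of your count (including the $\lceil\log_2 t\rceil$ correction for summing over the finitely many Hilbert polynomials, which the paper glosses over by fixing one polynomial) is sound.
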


\begin{proof}
By the above theorem there are only finitely many possibilities
for the Hilbert polynomial of $E$, so we can fix it throughout the
proof. Let us take $E$ as above on the fibre $X_s$ over a point
$s\in S$ with finite $k(s)$ (if $k(s)$ is infinite then our assertion
is trivially satisfied). By the above theorem if we take
$m=P_{\X/S}(r, c_1, \Delta,\mu_{\max})+1$ then $E(m)$ is globally
generated by $a=P(E)(m)$ sections. Let us define $E'$ using the
sequence
$$0\to E'\to \O _{X_s}(-mH_s)^{a}\to E\to 0.$$
Clearly, the Hilbert polynomial of $E'$ depends only on the
Hilbert polynomials of  $E$ and $H_s$. Since $\mu _{\max}(E')\le
\mu (\O _{X_s}(-mH_s))=-mH_s^d$ we can again use the above theorem
to find some explicit $m'$ such that $E'(m')$ is globally
generated by $b=P(E')(m')= a\chi (\O _{X_s}((m'-m)H))- P(E)(m')$
sections. Therefore $E$ is a cokernel of some map
$$\O _{X_s}(-m'H_s)^{b}\to \O _{X_s}(-mH_s)^{a}.$$
Then we can conclude similarly as in the proof of \cite[Lemma
3.2]{BK}. Namely, we can assume that the dimension of
$H^0(\O_{X_s}((m'-m)H_s))$ is computed by the Hilbert polynomial
of $\O_{X_s}$ (possibly we need to increase $m'$ but only by some
function depending on $\X /S$: for example we can apply the above
theorem to the rank $1$ case). Then the number of the sheaves that
we consider is bounded from the above by $|k(s)|^c$, where
$c=ab\chi(\O_X((m'-m)H))$.
\end{proof}

\medskip

Let $R$ be a $\ZZ$-domain of finite type containing $\ZZ$.
Let $f:\X\to S=\Spec R$ be a smooth projective morphism of relative dimension $d\ge 1$
and let $\O _{\X} (1)$ be an $f$-very ample line bundle.

Let $K$ be the quotient field of $R$. Let $\X_0=\X\times _S\Spec
K$ be the generic fibre of $f$. Let $\E$ be an $S$-flat family of
rank $r$ torsion free sheaves on the fibres of $f$. Let us choose
an embedding $K\subset \CC$.  Then for the restriction  $\E_0$ of
$\E$ to $\X_0$ we consider $\E_{\CC}=\E_0\otimes \CC$.

We say that $(s _n,e_n )_{n\in \NN}$, where $s _n \in S$ are
closed points and $e_n$ are positive integers, is a {\sl Frobenius
descent sequence} for $\E$ if there exist coherent sheaves $\F _n$
on the fibres $\X _{s _n}$ such that $\E _{\X _{s_n}}\simeq
(F^{e_n})^*\F _n$.

\medskip

The following theorem generalizes \cite[Theorem 3.4]{BK} to higher
dimensions and relates the notion of flatness in positive
characteristic to the one coming from complex geometry:

\begin{Theorem} Let us assume that there exists a Frobenius descent
sequence $(s_n,e_n )_{n\in \NN}$ for $\E$ with $(e_n-| k(s
_n)|^c)_{n\in \NN} \to \infty$, where $c$ is the constant from
Corollary \ref{const}. Then the restriction  $\E_0$ of $\E$ to the
generic fibre of $f$ is an extension of stable (with respect to an
arbitrary polarization) locally free sheaves with vanishing Chern
classes. Moreover, $\E_{\CC}$ is also an extension of slope stable
locally free sheaves with vanishing Chern classes (note that these
stable sheaves need not be extensions of sheaves defined over
$K$). In particular, $\E_{\CC}$ has structure of a holomorphic
flat bundle on $X_{\CC}$ which is an extension of unitary flat
bundles.
\end{Theorem}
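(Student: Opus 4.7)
The plan is to convert the growth condition $e_n-|k(s_n)|^c\to\infty$ into a pigeonhole argument on intermediate Frobenius descents, fed through Corollary~\ref{const}. For each $n$, replace $\F_n$ by its reflexive hull and introduce the intermediate descents $G_j:=(F^{e_n-j})^*\F_n$ on $\X_{s_n}$, $0\le j\le e_n$, so that $(F^j)^*G_j=\E_{\X_{s_n}}$, $G_0=\F_n$, and $G_{e_n}=\E_{\X_{s_n}}$. Because $\ch_i$ scales as $\ch_i((F^a)^*\F)=p^{ia}\ch_i(\F)$ while the intersection numbers $\ch_i(\E_{\X_s})\cdot H_s^{d-i}$ are constant in $s\in S$ by flatness of $\E$, reading $\ch_1(\E_{\X_{s_n}})\cdot H^{d-1}=p^{e_n}\,\ch_1(\F_n)\cdot H^{d-1}$ inside $\ZZ$, and the analogous identity for the $\ch_2$-combination entering Corollary~\ref{const}, forces $\ch_1(\E_0)\cdot H^{d-1}=0$ and vanishing of the relevant discriminant-type quantity of $\E_0$, as soon as $p^{e_n}$ exceeds the fixed integer invariants of $\E$.

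Next I would spread the Harder--Narasimhan filtration of $\E_0$ to an open $U\subset S$ and, passing to the subsequence of $n$ with $s_n\in U$, obtain a uniform upper bound $\mu_{\max}(\E_{\X_{s_n}})\le C$. Combined with $\mu_{\max}((F^j)^*G_j)\ge p^j\mu_{\max}(G_j)$ this gives $\mu_{\max}(G_j)\le C$ uniformly in $j$ and $n$, so all the $G_j$ lie in a single bounded family to which Corollary~\ref{const} applies with one constant $c$. Hence $\{G_0,\dots,G_{e_n}\}$ contains at most $|k(s_n)|^c$ distinct isomorphism classes, and since $e_n>|k(s_n)|^c$ eventually, pigeonhole produces indices $0\le j_1<j_2\le e_n$ with $G_{j_1}\cong G_{j_2}$, equivalently $G_{j_2}\cong (F^{j_2-j_1})^*G_{j_2}$.

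The key observation is that a reflexive sheaf $G$ satisfying $G\cong (F^m)^*G$ for some $m\ge 1$ has vanishing numerical Chern character, since $\ch_i(G)=p^{im}\ch_i(G)$ forces $\ch_i(G)=0$ for $i\ge 1$; in particular $\mu(G)=0$, and combining the self-isomorphism with $\mu_{\max}((F^m)^*G)\ge p^m\mu_{\max}(G)$ forces $\mu_{\max}(G)=0$. Running the same argument with $(F^k)^*G$ (which is again $F^m$-self-periodic) in place of $G$ shows $(F^k)^*G$ is semistable for every $k\ge 0$, so $G$ is strongly semistable. Consequently $G_{j_2}$, and hence $\E_{\X_{s_n}}=(F^{e_n-j_2})^*G_{j_2}$, is strongly semistable with vanishing Chern classes for all sufficiently large~$n$.

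Passage to the generic and complex fibres uses upper semicontinuity of $\mu_{\max}$ in flat families together with specialization invariance of intersection-theoretic Chern classes: $\E_0$ is itself strongly semistable with $\ch_1(\E_0)\cdot H^{d-1}=\ch_2(\E_0)\cdot H^{d-2}=0$. Theorem~\ref{loc-free} then gives local freeness of $\E_0$, Jordan--H\"older over $K$ supplies the filtration by stable locally free sheaves with vanishing Chern classes, and the polarization-independence of the resulting category (the analogue of Theorem~\ref{Simp}) covers ``arbitrary polarization''. Base-changing to $\CC$ and possibly refining the JH filtration gives the same statement for $\E_\CC$, and Theorem~\ref{Simp} then realises each stable piece as an irreducible unitary flat bundle. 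The main obstacle is establishing uniform-in-$j$ bounds on the invariants of the intermediate descents $G_j$ so that Corollary~\ref{const} can be applied with a single constant $c$ to all of them simultaneously; once this is in place, the pigeonhole plus Frobenius-self-periodicity conclusion is essentially forced.
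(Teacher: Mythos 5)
Your argument is essentially the paper's own proof: the author establishes vanishing of the Chern numbers exactly as you do (flatness of $\E$ plus $p$-power divisibility coming from $\E_{\X_{s_n}}\simeq (F^{e_n})^*\F_n$) and then invokes the Brenner--Kaid pigeonhole on the intermediate Frobenius descents, with Corollary \ref{const} supplying the count $|k(s_n)|^c$, so that Frobenius-periodicity yields strong semistability, after which Theorem \ref{loc-free}, openness in the family, and Simpson's results finish the proof --- precisely the chain you reconstruct. The one wobbly device is your bound on $\mu_{\max}(\E_{\X_{s_n}})$ via spreading out the Harder--Narasimhan filtration of $\E_0$ (the points $s_n$ need not meet the open set where that filtration spreads), but the uniform bound you need holds for all $s\in S$ simply because $\{\E_s\}_{s\in S}$ is a bounded family, so nothing essential is lost.
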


\begin{proof} Note that we can assume that $S$ is connected. Then by
$S$-flatness of $\E$ the numbers $c_i=c_i(\E _s)\cdot c_1(\O_{\X
_s}(1))^{d-i}$ are independent of $s\in S$. Since
$$c_i(\E _{s_n})\cdot c_1(\O_{\X _{s_n}}(1))^{d-i}=(\chr k(s_n))^{e_n}
c_i(\F _{n})\cdot c_1(\O_{\X _{s_n}}(1))^{d-i}$$ and $e_n\to
\infty$ we see that $c_i=0$. The rest of the proof is the same as
the proof of \cite[Theorem 3.4]{BK} using Corollary \ref{const}
instead of \cite[Lemma 3.2]{BK}. The final part of the theorem
follows from \cite[Theorem 2]{Si} and \cite[Lemma 3.5]{Si}.

Alternatively, we can use Theorem \ref{loc-free} as for large $n$
the sheaves $\E_{s_n}$ are strongly semistable as follows from the
proof. Hence by Theorem \ref{loc-free} $\E_{s_n}$ are locally free
for large $n$ which implies that $\E _0$ is locally free by
openness of local freeness. Then one can consider the
Jordan--H\"older filtration of $\E_0$, extend it to some
filtration over nearby fibres and use induction on the rank as in
the proof of Theorem \ref{loc-free}.
\end{proof}

\bigskip

\section{Restriction theorem for strongly stable sheaves
with vanishing discriminant}

In this section we prove the restriction theorem for strongly
stable sheaves. It is used in the next section and it also plays
an important role in proofs of the Lefschetz type theorems for the
S-fundamental group (see, e.g., proof of Theorem
\ref{Lefschetz1}).

\medskip

Let us consider $\PP ^2$ over an algebraically closed field of
characteristic $p>0$. In \cite{Br} H. Brenner showed that the
restriction of $\Omega _{\PP ^2}$ to a curve $x^d+y^d+z^d=0$,
where $p^{e}<d<3/2p^{e}$ for some integer $e$, is not strongly
stable. Hence the restriction of a strongly stable sheaf to a
smooth hypersurface of large degree need not be strongly stable.
But by \cite[Theorem 5.2]{La1} restriction of a strongly stable
sheaf with trivial discriminant to a hypersurface of large degree
is still strongly stable (the bound on the degree of this
hypersurface depends on the rank of the sheaf). However, in this
case we have the following stronger version of restriction theorem
(valid in arbitrary characteristic):

\begin{Theorem} \label{bogomolov}
Let $D_1,\dots ,D_{d-1}$ be a collection of ample divisors on $X$
of dimension $d\ge 2$. Let $E$ be a rank $r\ge 2$ torsion free
sheaf with $\Delta (E)D_2\dots D_{d-2}=0$. Assume that $E$ is
strongly $(D_1,\dots ,D_{d-1})$-stable. Let $D \in |D_1|$ be any
normal effective divisor such that $E_D$ has no torsion. Then
$E_D$ is strongly $(D_2,\dots ,D_{d-1})_D$-stable.
\end{Theorem}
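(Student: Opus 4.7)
The plan is a proof by contradiction combined with Frobenius pullback and Bogomolov's inequality.

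First I would reduce strong stability of $E_D$ to ordinary stability. All hypotheses are preserved under Frobenius pullback: if $E$ is strongly $(D_1,\dots,D_{d-1})$-stable then so is $(F^k)^{*}E$, and $\Delta((F^k)^{*}E)\cdot D_2\cdots D_{d-2}=p^{2k}\Delta(E)\cdot D_2\cdots D_{d-2}=0$. Since Frobenius commutes with restriction, $(F^k)^{*}(E_D)=((F^k)^{*}E)_D$, so it suffices to prove: under the hypotheses, $E_D$ is $(D_2|_D,\dots,D_{d-1}|_D)$-stable. Applying this to every $(F^k)^{*}E$ then yields strong stability of $E_D$.

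Next I would extract the crucial numerical consequence of the hypothesis. The assumption $\Delta(E)\cdot D_2\cdots D_{d-2}=0$ is the vanishing of a $1$-cycle class in $N_1(X)\otimes\QQ$, so its intersection with any divisor is zero. In particular,
$$\Delta(E)\cdot D_1\cdot D_2\cdots D_{d-2}=0,$$
and by the projection formula this equals the discriminant number $\Delta(E_D)\cdot (D_2|_D)\cdots (D_{d-2}|_D)$ computed on the $(d-1)$-dimensional normal variety $D$. Semistability of $E_D$ then follows from a direct Bogomolov-type argument: if $E_D$ were not $(D_2|_D,\dots,D_{d-1}|_D)$-semistable, its Harder--Narasimhan filtration would have positive discrepancy, and Langer's sharp version of Bogomolov's inequality on $D$ would give
$$\Delta(E_D)\cdot (D_2|_D)\cdots (D_{d-2}|_D)\;>\;0,$$
contradicting the vanishing established above.

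The remaining step -- upgrading semistability to stability of $E_D$ -- is the main obstacle. Suppose $E_D$ is strictly semistable, with a proper saturated subsheaf $F\subset E_D$ of the same slope. Applying the semistability argument to each $(F^k)^{*}E$ shows that this strict semistability persists through Frobenius pullback. To obtain a contradiction with the strong stability of $E$ on $X$, I would try to produce, from $F$, a destabilizing subsheaf of $E$ (or of some Frobenius twist) on $X$. One approach is to study the elementary modification $0\to K\to E\to E_D/F\to 0$, where $K$ fits into $0\to E(-D)\to K\to F\to 0$, and combine the equal-slope condition $\mu(F)=\mu(E_D)=\mu(E)$ with the vanishing of the mixed discriminant numbers to exhibit a subsheaf of $E$ (after a suitable twist) violating stability. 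Equivalently, one may try to extend $F$ to a subsheaf of $E$ across an infinitesimal neighborhood of $D$, showing that the relevant $\Ext^1$ obstruction vanishes by exploiting ampleness of $D_1$ and the vanishing of $\Delta(E)\cdot D_1\cdots D_{d-2}$.

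The principal difficulty is this last step: the Bogomolov inequality is insensitive to the distinction between semistability and strict stability (both give non-strict inequalities once $\Delta=0$), so one needs genuinely finer input coming from the strong stability of $E$ together with the vanishing of the mixed discriminant. I expect the argument to hinge on a deformation- or extension-theoretic lifting of Jordan--H\"older data from $D$ to $X$, using the fact that strong stability of $E$ rules out exactly the kind of filtration that strict semistability of $E_D$ would produce.
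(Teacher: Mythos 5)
There is a genuine gap, in fact two. First, your derivation of \emph{semistability} of $E_D$ is based on a false implication: the vanishing $\Delta(E_D)\cdot(D_2|_D)\cdots =0$ does not force $E_D$ to be semistable, because unstable sheaves can perfectly well have zero (or negative) discriminant. For instance $T_{\PP^2}(-1)\oplus\O_{\PP^2}(-1)$ has $c_1=0$, $c_2=0$, hence $\Delta=0$, and is unstable. Langer's refined Bogomolov inequality reads $H^d\cdot\Delta(E)H^{d-2}+r^2(L_{\max}(E)-\mu(E))(\mu(E)-L_{\min}(E))\ge 0$; for an unstable sheaf it gives a \emph{negative} lower bound on $\Delta$, not the strict positivity you invoke. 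So the first half of your argument does not establish semistability of the restriction, and your second half (upgrading to stability) is explicitly left open. Your closing remark that Bogomolov-type inequalities cannot see the difference between strict stability and semistability is a correct diagnosis of why your two-step plan (first semistability, then stability) cannot be completed as stated.

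The elementary modification you mention in passing is in fact the engine of the actual proof, but the point you are missing is \emph{where} the strict inequality comes from. Suppose $S\subset \ti E_D$ (with $\ti E=(F^{k_0})^*E$) is a saturated subsheaf of rank $\rho$ with $\mu(S)\ge\mu(\ti E_D)$ --- this covers instability and strict semistability in one stroke --- and let $G=\ker(\ti E\to T)$ with $T=\ti E_D/S$. A direct Chern class computation gives
$$\Delta(G)H^{d-2}=-\rho(r-\rho)H^d+2\bigl(rc_1(T)-(r-\rho)Dc_1(\ti E)\bigr)H^{d-2}\le -\rho(r-\rho)H^d<0,$$
i.e.\ the modification itself contributes a strictly negative self-intersection term even when $S$ has exactly the same slope. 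One then applies the refined Bogomolov inequality not on $D$ but to $(F^l)^*G$ on $X$: since $G$ is sandwiched between $\ti E(-D)$ and $\ti E$, both strongly stable, the quantities $L_{\max}((F^l)^*G)-\mu$ and $\mu-L_{\min}((F^l)^*G)$ are bounded by $\frac{r-\rho}{r}p^lH^d-\frac{1}{r(r-1)}$ and $\frac{\rho}{r}p^lH^d-\frac{1}{r(r-1)}$ respectively, and the inequality $H^d\Delta((F^l)^*G)H^{d-2}+r^2(L_{\max}-\mu)(\mu-L_{\min})\ge 0$ then forces $\frac{r}{r-1}p^lH^d\le\frac{1}{(r-1)^2}$, a contradiction for large $l$. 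This is the ``genuinely finer input'' you were looking for: strong stability of $E$ enters through the bounds on $L_{\max}$ and $L_{\min}$ of the modification, not through lifting a Jordan--H\"older filtration from $D$ to $X$.
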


\begin{proof}
For simplicity of notation we proof  the result in case when all
the divisors $D_1,\dots ,D_{d-1}$ are equal to one ample divisor
denoted by $H$. The general proof is exactly the same.

Let $\Delta (E)H^{d-2}=0$ and assume that $E$ is strongly
$H$-stable. Let $D \in |H|$ be any normal effective divisor such
that $E_D$ has no torsion. We need to prove that $E_D$ is strongly
$H_D$-stable. Suppose that there exists a non-negative integer
$k_0$ such that the restriction  of ${\ti E}=(F^{k_0})^* E$ to $D$
is not stable. Let $S$ be a rank $\rho$ saturated destabilizing
subsheaf of ${\ti E}_D$. Set $T=(\ti E_D)/S$. Let $G$ be the
kernel of the composition $\ti E\to \ti E_D\to T$. From the
definition of $G$ we get a short exact sequence:
$$0\to G\to \ti E \to T\to 0.$$
Applying the snake lemma to the diagram
$$ \xymatrix{
&&0\ar[r]\ar[d]&0\ar[r]\ar[d]&S\ar[d]&\\
&0\ar[r]& \ti E (-D)\ar[r]\ar[d]& \ti E \ar[d]\ar[r]& \ti E_D\ar[d]\ar[r]& 0\\
&0\ar[r]& G\ar[r]&\ti E \ar[r]& T\ar[r]& 0\\
}$$ we also get the following exact sequence:
$$0\to \ti E(-D)\to G \to S\to 0.$$

 Computing $\Delta (G)$ we get
$$\Delta (G)H^{d-2}=-\rho (r-\rho) H^{d} +2 (r c_1(T)-(r-\rho)Dc_1(\ti E)) H^{d-2}. $$
By assumption $(r c_1(T)-(r-\rho)Dc_1(\ti E))H^{d-2}\le 0$, so
$$\Delta (G) H^{d-2} \le -\rho (r-\rho) H^{d}.$$
By  \cite[Theorem 2.7]{La1}, for large $l$ we have $\mu _{\max}
((F^l)^*G)=L_{\max} ((F^l)^*G)$ and similarly for $\mu _{\min}$.
Using strong $H$-stability of $\ti E$ and $\ti E(-D)$ we get for
large integers $l$
$$L _{\max} ((F^l)^*G)-\mu ((F^l)^*G)=\mu _{\max} ((F^l)^*G)
-\mu ((F^l)^*\ti E)+\frac{r-\rho}{r}p^l H^{d}\le {r-\rho \over
r}p^lH^d -{1\over r(r-1)}$$ and
$$\mu ((F^l)^*G)-L _{\min} ((F^l)^*G)=\mu ((F^l)^*\ti E(-D))-
\mu _{\min} ((F^l)^*G)+\frac{\rho}{r}p^lH^{d}\le
\frac{\rho}{r}p^lH^d -\frac{1}{r(r-1)}.$$ Hence, applying
\cite[Theorem 5.1]{La1} to $(F^l)^*G$ gives
\begin{eqnarray*}
0&\le H^d\cdot \Delta ((F^l)^*G)H^{d-2} +r^2(L_{\max} ((F^l)^*G)
-\mu
(F^l)^*G))(\mu ((F^l)^*G)-L_{\min}((F^l)^*G)) \\
&\le -\rho (r-\rho) p^{2l} (H^d)^2 +r^2 \left({r-\rho \over
r}p^lH^d -\frac {1}{r(r-1)}\right) \left(\frac{\rho}{r}p^lH^d
-{1\over r(r-1)}\right).
\end{eqnarray*} Therefore
$$\frac{r}{r-1}p^lH^d\le \frac{1}{(r-1)^2},$$
which gives a contradiction.
\end{proof}

Later we show a much stronger restriction theorem (see Corollary
\ref{strong-bogomolov}) but we need this weaker result to
establish Theorem \ref{loc-free} used in the proof of this
stronger result.

\section{Strongly semistable sheaves with vanishing Chern classes}

In this section we show that strongly semistable torsion free
sheaves with vanishing Chern classes are locally free and that
they are strongly semistable with respect to all polarizations.

\medskip

The following theorem is an analogue of \cite[Theorem 2]{Si} in
positive characteristic. However, we need a different proof as
Simpson's proof uses Lefschetz hyperplane theorem for topological
fundamental groups and the correspondence between flat (complex)
bundles and semistable Higgs bundles with vanishing Chern classes
(see \cite[Lemma 3.5]{Si}). We reverse his ideas and we use this
result to prove Lefschetz type theorems for \'etale, Nori and
S-fundamental groups.

\begin{Theorem} \label{loc-free}
Let $X$ be a smooth $d$-dimensional projective variety over an
algebraically closed field $k$ of characteristic $p>0$ and let $H$
be an ample divisor on $X$. Let $E$ be a strongly $H$-semistable
torsion free sheaf  on $X$ with $\ch _1(E)\cdot H^{d-1}=0$ and
$\ch _2 (E)\cdot H^{d-2}=0$. Assume that either $E$ is reflexive
or the reduced Hilbert polynomial of $E$ is equal to the Hilbert
polynomial of $\O_X$. Then $E$ is an extension of stable and
strongly semistable locally free sheaves with vanishing  Chern
classes. Moreover, there exists $n$ such that $(F^n)^*E$ is an
extension of strongly stable locally free sheaves with vanishing
Chern classes.
\end{Theorem}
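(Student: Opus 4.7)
The plan is to prove, by induction on $d = \dim X$, that $E$ is locally free; the refined filtration statement then follows from a Jordan--H\"older argument combined with sufficient Frobenius pull-backs. The base case $d = 1$ is immediate: any torsion-free sheaf on a smooth projective curve is locally free, the condition $\ch_1(E)\cdot H^{d-1} = 0$ forces $\deg E = 0$, and after sufficiently many Frobenius pull-backs a Jordan--H\"older filtration of $E$ has strongly stable pieces with vanishing Chern classes.

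For the inductive step, fix $m \gg 0$ and pick a sufficiently general $D \in |mH|$; by Bertini $D$ is smooth, and the choice can be arranged so that $E|_D$ is torsion-free (using reflexivity, which puts the non-locally-free locus of $E$ in codimension $\geq 3$, or directly from the reduced Hilbert polynomial hypothesis). The numerical conditions restrict well, since $\ch_1(E|_D)\cdot H_D^{d-2} = m\,\ch_1(E)\cdot H^{d-1} = 0$ and $\ch_2(E|_D)\cdot H_D^{d-3} = m\,\ch_2(E)\cdot H^{d-2} = 0$. Applying Theorem \ref{bogomolov} to the strongly stable pieces of a Jordan--H\"older filtration of a suitable Frobenius pull-back of $E$ (whose stable quotients have slope zero and vanishing $\Delta\cdot H^{d-2}$ by the Hodge index inequality combined with the Bogomolov inequality) yields that $E|_D$ is strongly semistable. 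If the reduced Hilbert polynomial of $E$ is $P_{\O_X}$, the same holds for $E|_D$ by computing $\chi(E|_D(kH_D))$ from the sequence $0 \to E(-D) \to E \to E|_D \to 0$, and the inductive hypothesis applies to give $E|_D$ locally free.

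To conclude that $E$ itself is locally free, I would argue pointwise: the condition $p \in D$ is a hyperplane in $|mH|$, which meets any dense open subset, so $D$ can be chosen to pass through any prescribed $p \in X$ while retaining all generic properties above. The sequence $0 \to E(-D) \to E \to E|_D \to 0$ is exact at $p$ because the defining section of $D$ is a non-zero-divisor on the torsion-free sheaf $E$. A local frame of $E|_D$ near $p$ lifts by Nakayama to $r$ generators of $E_p$, and torsion-freeness forces these generators to be independent, so $E$ is locally free at $p$. Once $E$ is a vector bundle, the structural statement follows by taking a Jordan--H\"older filtration and iterating Frobenius pull-backs until each quotient becomes strongly stable; the vanishing of all Chern classes of each piece is then verified by restriction to a sufficiently positive complete intersection curve, on which strongly stable degree-zero bundles automatically have trivial Chern polynomial.

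The main obstacle I expect is the reflexive case: ensuring the inductive hypothesis genuinely applies to $E|_D$ without assuming the reduced Hilbert polynomial condition on $E$ is delicate, since $E|_D$ need not be reflexive once $\dim D \geq 3$. The resolution is either a separate argument keeping $E|_D$ reflexive on $D$ (exploiting that the non-locally-free locus of a reflexive $E$ is already small enough for generic $D$ to intersect it in codimension $\geq 3$ in $D$), or a reduction to the Hilbert polynomial case using that $\ch_1(E)\cdot H^{d-1} = 0$ and Hodge index together with Bogomolov force $\ch_1(E)$ to be numerically trivial, combined with restriction to curves and the boundedness of Theorem \ref{refl-bound} to propagate vanishing to all higher Chern classes.
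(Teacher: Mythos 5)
Your overall strategy --- induction on dimension via the restriction theorem (Theorem \ref{bogomolov}), Bertini plus Nakayama to pass from local freeness of $E_D$ to local freeness of $E$, and a Jordan--H\"older/Frobenius argument for the structural statement --- is the same as the paper's. But there are two genuine gaps. The first, which you flag but do not resolve, is the load-bearing step of the whole induction: to apply the inductive hypothesis to $E_D$ you must put $E_D$ into one of the two allowed cases, and since $E_D$ is in general not reflexive (already for $d\ge 4$) you are forced into the Hilbert-polynomial case; this requires knowing that \emph{all} Chern classes of $E$ vanish, not merely $\ch _1(E)\cdot H^{d-1}=\ch _2(E)\cdot H^{d-2}=0$. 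Your primary suggestion --- verifying vanishing ``by restriction to a sufficiently positive complete intersection curve'' --- cannot work: a curve detects only $c_1$ against one class and sees nothing of $c_2,\dots,c_d$. The argument that does work, and which the paper uses, is that Bogomolov's inequality plus the Hodge index theorem give $\Delta(E)H^{d-2}=0$, so $\{(F^n)^*E\}_{n}$ lies in the bounded family $\cT _{X/k}(r,0,0;0)$ of Theorem \ref{refl-bound}; then Lemma \ref{obvious} says the classes $c_i((F^n)^*E)=p^{ni}c_i(E)$ range over a finite subset of the finitely generated torsion-free group $N_*X$, which forces $c_i(E)=0$. You mention this boundedness only as a fallback in your last paragraph; it must be carried out up front, before the restriction step, since it is exactly what makes the reduced Hilbert polynomial of $E_D$ equal to that of $\O _D$.

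The second gap is in the pointwise Nakayama step for non-reflexive $E$. If $E$ is torsion free but not locally free at $p$, then for \emph{every} divisor $D$ through $p$ one has $E_D\otimes k(p)=E\otimes k(p)$ of dimension $>r$, so $E_D$ is not torsion free at $p$ and the inductive hypothesis never applies to it: the argument is circular precisely at the points you need to control (your base case $d=1$ cannot feed a restriction argument in dimension $2$ for the same reason). The paper avoids this by treating $d=2$ separately ($E^{**}$ is locally free and strongly semistable, $0\le\Delta(E^{**})\le\Delta(E)=0$ forces $c_2(E^{**}/E)=0$, hence $E=E^{**}$) and, in higher dimension, by proving the reflexive case first and only then observing that $E^{**}/E$ has trivial Hilbert polynomial, so $E$ was reflexive to begin with. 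For reflexive $E$ the restriction $E_D$ is torsion free for every effective divisor --- a depth count, not a genericity or codimension statement --- and your Nakayama argument is then correct. Finally, note that Theorem \ref{bogomolov} applies only to strongly stable sheaves, so the semistable case also needs the rank induction on the Jordan--H\"older quotients (with Grothendieck's lemma supplying boundedness of the quotients) rather than a single appeal to the dimension induction.
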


\begin{proof}  Before starting the proof
of the theorem let us prove the following lemma:

\begin{Lemma}
Let $E$ be a strongly $H$-semistable torsion free sheaf  on $X$
with $\ch _1(E)\cdot H^{d-1}=0$ and $\ch _2 (E)\cdot H^{d-2}=0$.
Then the $1$-cycle $c_1(E)H^{d-2}$ is numerically trivial and
$\Delta (E)H^{d-2}=0$.
\end{Lemma}

\begin{proof}
By \cite[Theorem 3.2]{La1} we have $\Delta (E)H^{d-2} \ge 0$.
Therefore by the Hodge index theorem
$$0=2r( {\ch } _2 (E)H^{d-2})=(c_1(E)^2- \Delta (E))H^{d-2}\le c_1(E)^2 H^{d-2} \le \frac{(c_1(E)H^{d-1})^2}{H^d}=0,$$
which implies the required  assertions.
\end{proof}

In case of curves the theorem follows from the existence of the
Jordan--H\"older filtration. The proof is by induction on the
dimension starting with dimension $2$.

If $X$ is a surface then we prove that a strongly semistable
torsion free sheaf $E$ on $X$ with $\ch _1(E)\cdot H=0$ and $\ch
_2 (E)=0$ is an extension of stable and strongly semistable
locally free sheaves with vanishing Chern classes. This part of
the proof is well known and analogous to the proof of
\cite[Theorem 2]{Si}. Namely, the reflexivization $E^{**}$ is
locally free and strongly semistable. Hence by \cite[Theorem
3.2]{La1} $\Delta (E^{**})\ge 0$. Since $\Delta (E^{**})\le \Delta
(E)$ and by the above lemma $\Delta (E)=0$, we have
$c_2(E^{**}/E)=0$. This implies that $E^{**}/E$ is trivial and $E$
is locally free. The required assertion follows easily from this
fact (it will also follow from the proof below).

Now fix $d\ge 3$ and assume that the theorem holds in dimensions
less than $d$. Let $E$ be a strongly stable reflexive sheaf on
$d$-dimensional $X$ with $\ch _1(E)\cdot H^{d-1}=0$ and $\ch _2
(E)\cdot H^{d-2}=0$. Then by the above lemma all the sheaves
$\{(F^n)^*E\} _{n\in \NN}$ are in the family $\cT _{X/k }(r,0,0;
0)$. This family is bounded by Theorem \ref{refl-bound}.
Therefore, since by Lemma \ref{obvious} there are only finitely
many classes among $c_i((F^n)^*E)=p^n c_i(E)$, we see that the
Chern classes of $E$ vanish. In particular, for any smooth divisor
$D$ on $X$ the reduced Hilbert polynomial of $E_D$ is equal to the
Hilbert polynomial of $\O_D$. Let us also remark that $E_D$ is
torsion free (see, e.g., \cite[Corollary 1.1.14]{HL}).

Let us first assume that $E$ is strongly stable. By Theorem
\ref{bogomolov} the restriction $E_D$ is also strongly stable for
all smooth divisors $D\in |mH|$ and all $m\ge 1$. In particular,
$E_D$ is locally free by the induction assumption. Note that if
$x\in D$ then $E\otimes k(x)\simeq E_D\otimes k(x)$ is an
$r$-dimensional vector space over $k(x)\simeq k$. Therefore by
Nakayama's lemma $E$ is locally free at $x$. By Bertini's theorem
(see, e.g., \cite[Theorem 3.1]{DH}) for any closed point $x\in X$
there exists for large $m$ a smooth hypersurface $D\in |mH|$
containing $x$. Therefore $E$ is locally free at every point of
$X$, i.e., it is locally free.

Now let us consider the general case. Let us choose $m$ such that
all quotients in a Jordan-H\"older filtration of $(F^m)^*E$ are
strongly stable (clearly such $m$ exists). Then we can prove the
result by induction on the rank $r$. Namely, if
$$0=E_0\subset E_1\subset \dots \subset E_l=(F^m)^*E$$
is the Jordan-H\"older filtration then $E_1$ is reflexive with $c
_1(E_1)H^{d-1}=0$ and $\Delta (E_1) H^{d-2}=0$. The last equality
follows from Bogomolov's inequality for strongly semistable
sheaves (see [La1, Theorem 3.2]) and from the inequality $\Delta
(E_1) H^{d-2}\le \Delta (E) H^{d-2}$ obtained from the Hodge index
theorem (see, e.g., \cite[Corollary 7.3.2]{HL}). So by the above
we know that $E_1$ is locally free with vanishing Chern classes.
Note that $\{(F^n)^*(((F^m)^*E)/E_1)\} _{n\in \NN}$ are semistable
torsion free quotients of the sheaves from a bounded family.
Therefore by Grothendieck's lemma (see \cite[Lemma 1.7.9]{HL})
they also form a bounded family and by the previous argument they
have vanishing Chern classes. Hence the reduced Hilbert polynomial
of $((F^m)^*E)/E_1$ is equal to the Hilbert polynomial of $\O_X$
and we can apply the induction assumption to conclude that
$((F^m)^*E)/E_1$ is locally free. This implies that all the
quotients in the Jordan-H\"older filtration of $(F^m)^*E$ are
locally free, which proves the last assertion of the theorem. Then
the first assertion follows just by taking any Jordan-H\"older
filtration of $E$.

Now we assume that the reduced Hilbert polynomial of $E$ is equal
to the Hilbert polynomial of $\O_X$ but we do not assume that $E$
is reflexive. Then the reflexivization $E^{**}$ of $E$ satisfies
the previous assumptions and hence it is locally free with
vanishing Chern classes. Therefore the reduced Hilbert polynomial
of $E^{**}$ is also equal to the Hilbert polynomial of $\O_X$. In
particular, the Hilbert polynomial of the quotient $T=E^{**}/E$ is
trivial and hence $T=0$ and $E$ is reflexive. So we reduced the
assertion to the previous case (without changing the rank which is
important because of the induction step).
\end{proof}

\medskip
Note that the theorem fails if $d\ge 3$ and we do not make any
additional assumptions on the Hilbert polynomial or reflexivity of
$E$. For example one can take the ideal sheaf of a codimension
$\ge 3$ subscheme. This sheaf is strongly stable and torsion free
with $\ch _1(E)\cdot H^{d-1}=0$ and $\ch _2 (E)\cdot H^{d-2}=0$
but it is not locally free.

\medskip

\begin{Corollary} \label{bogomolov-cor}
Let $E$ be a locally free sheaf with $\ch _1(E)\cdot H^{d-1}=0$
and $\ch _2 (E)\cdot H^{d-2}=0$. Let $D \in |H|$ be any normal
effective divisor. If $E$ is strongly semistable then $E_D$ is
strongly semistable.
\end{Corollary}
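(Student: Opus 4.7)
The plan is to reduce to the strongly stable case handled by Theorem \ref{bogomolov} and then reassemble. First I would apply Theorem \ref{loc-free} to $E$: by hypothesis $E$ is strongly $H$-semistable and locally free with $\ch_1(E)H^{d-1}=\ch_2(E)H^{d-2}=0$, so by the ``moreover'' clause there exists an integer $N$ such that $(F^N)^*E$ admits a filtration $0=W_0\subset W_1\subset\dots\subset W_l=(F^N)^*E$ whose successive quotients $G_i=W_i/W_{i-1}$ are strongly $H$-stable locally free sheaves with vanishing Chern classes. In particular each $G_i$ satisfies $\Delta(G_i)H^{d-2}=0$ (the $c_1^2$ term vanishes numerically because $c_1(G_i)=0$ in $N_*(X)$), so every $G_i$ satisfies the hypotheses of Theorem \ref{bogomolov}.

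Next I would reassemble. Since $D\in|H|$ is normal and each $G_i$ is locally free, $(G_i)_D$ is automatically torsion free, so Theorem \ref{bogomolov} gives that each $(G_i)_D$ is strongly $H_D$-stable of slope $0$. Restricting the filtration $\{W_i\}$ to $D$ is exact because the short exact sequences $0\to W_{i-1}\to W_i\to G_i\to 0$ are locally split, producing a filtration of $((F^N)^*E)_D=(F^N)^*(E_D)$ whose successive quotients $(G_i)_D$ are strongly $H_D$-semistable of the same slope $0$. Since Frobenius pullback commutes with restriction to $D$ and with extensions, applying $(F^k)^*$ yields a filtration of $(F^{k+N})^*(E_D)$ whose factors are semistable of slope $0$; as extensions of semistable sheaves of the same slope are semistable, $(F^{k+N})^*(E_D)$ is semistable for every $k\ge 0$.

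It remains to propagate semistability across the $N$ initial Frobenius twists, i.e.\ to verify that $(F^j)^*(E_D)$ is semistable for $0\le j<N$. This is the standard fact that on a normal projective variety semistability descends under Frobenius pullback: Frobenius multiplies slopes by $p$, and on the regular locus of $D$ (whose complement has codimension $\ge 2$ and so is irrelevant for first Chern class computations) the Frobenius is flat, so a saturated destabilizing subsheaf of $(F^j)^*(E_D)$ would pull back under $(F^{N-j})^*$ to a destabilizing subsheaf of $(F^N)^*(E_D)$, contradicting what we just proved. Hence $(F^j)^*(E_D)$ is semistable for every $j\ge 0$, which is the definition of strong $H_D$-semistability. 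The only really delicate point is the need for the ``moreover'' clause of Theorem \ref{loc-free}: a Jordan--H\"older decomposition into merely stable, strongly semistable factors would not be enough to apply Theorem \ref{bogomolov}, and it is precisely the production of \emph{strongly} stable factors after a single Frobenius twist that unlocks the argument.
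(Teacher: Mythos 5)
Your proposal is correct and follows essentially the same route as the paper: invoke the ``moreover'' clause of Theorem \ref{loc-free} to obtain a Frobenius pullback $(F^N)^*E$ filtered by strongly stable locally free sheaves with vanishing Chern classes, restrict each factor via Theorem \ref{bogomolov}, and reassemble. The paper states the reassembly in one line; your extra care about restricting the filtration exactly and descending semistability from $(F^N)^*(E_D)$ back to $E_D$ just makes explicit what the paper leaves implicit.
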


\begin{proof}
By the above theorem we can choose $m$ such that all quotients in
a Jordan-H\"older filtration of $(F^m)^*E$ are locally free and
strongly stable. Then by Theorem \ref{bogomolov} the restriction
of each quotient is strongly stable which proves the corollary.
\end{proof}

\begin{Remark}
Let us remark that in general a strongly semistable locally free
sheaf on a smooth projective variety does not restrict to a
semistable sheaf on a general smooth hypersurface of large degree
passing through a fixed point (not even in characteristic $0$).

For example one can take a non-trivial extension $E$ of $m_x$ by
$\O_{\PP ^2}$ for some $x\in \PP^2$. Then $E$ is a strongly
semistable locally free sheaf  but the restriction of $E$ to any
curve passing through $x$ is not semistable.

This shows that one cannot generalize the proof of
Mehta--Ramanathan's theorem to prove stability of the restriction
of a stable sheaf to a general hyperplane passing through some
fixed points (the proof for restriction of stable sheaves uses
restriction of semistable sheaves).
\end{Remark}

\medskip
The following theorem says that strong semistability for locally
free sheaves with vanishing Chern classes does not depend on the
choice of polarization:

\begin{Proposition} \label{independence}
Let $D_1,\dots, D_{d-1}$ be ample divisors on $X$. Let $E$ be a
strongly $(D_1,\dots, D_{d-1})$-semistable reflexive sheaf on $X$
with $\ch _1(E)\cdot D_1\dots D_{d-1}=0$ and $\ch _2 (E)\cdot
D_2\dots D_{d-1}=0$. Then it is locally free with vanishing Chern
classes and it is strongly semistable with respect to an arbitrary
collection of ample divisors.
\end{Proposition}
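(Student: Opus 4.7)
The plan is to follow the template of the proof of Theorem~\ref{loc-free}, but adapted to the multi-polarized setting, and to conclude via numerical flatness (which is polarization-independent).

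First I would establish the analog of the preparatory lemma used inside the proof of Theorem~\ref{loc-free}: namely that $\Delta(E)\cdot D_2\cdots D_{d-1}=0$ and that the $1$-cycle $c_1(E)\cdot D_2\cdots D_{d-1}$ is numerically trivial. One combines the multi-polarized Bogomolov-type inequality $\Delta(E)\cdot D_2\cdots D_{d-1}\ge 0$ valid for strongly $(D_1,\dots,D_{d-1})$-semistable reflexive sheaves (the appropriate variant of \cite[Theorem~3.2]{La1}) with the Hodge index theorem for several nef divisors, which gives
\[
(c_1(E)\cdot D_1\cdots D_{d-1})^2 \;\ge\; (c_1(E)^2\cdot D_2\cdots D_{d-1})\,(D_1^2\cdot D_2\cdots D_{d-1}).
\]
Using $2r\,\ch_2(E)=c_1(E)^2-\Delta(E)$ and the two vanishings in the hypothesis, this forces both $\Delta(E)\cdot D_2\cdots D_{d-1}=0$ and $c_1(E)^2\cdot D_2\cdots D_{d-1}=0$, and hence the numerical triviality of the $1$-cycle $c_1(E)\cdot D_2\cdots D_{d-1}$.

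Next I would deduce that all numerical Chern classes of $E$ vanish. The family $\{(F^n)^*E\}_{n\ge 0}$ consists of rank-$r$ reflexive sheaves, all strongly $(D_1,\dots,D_{d-1})$-semistable of slope $0$, with $c_1(\,\cdot\,)\cdot D_1\cdots D_{d-1}=0$ and $\Delta(\,\cdot\,)\cdot D_2\cdots D_{d-1}=0$ by the previous step. This puts the family into the multi-polarized analog of $\cT(r,0,0;0)$, which is bounded by the full form of \cite[Theorem~3.4]{La2}. By Lemma~\ref{obvious} only finitely many numerical classes occur among the $c_i((F^n)^*E)=p^{ni}c_i(E)$, so $c_i(E)=0$ in $N_*(X)\otimes\QQ$ for $i=1,2$.

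Once all Chern classes vanish, for every ample $H$ we automatically have $c_1(E)\cdot H^{d-1}=\ch_2(E)\cdot H^{d-2}=0$. To conclude it remains to promote strong $(D_1,\dots,D_{d-1})$-semistability to strong semistability with respect to arbitrary polarizations, and to establish local freeness. Choose $m$ large enough that every Jordan--H\"older factor $Q_i$ of $(F^m)^*E$ relative to $(D_1,\dots,D_{d-1})$ is strongly stable; each $Q_i$ inherits reflexivity and the vanishings $c_1(Q_i)\cdot D_1\cdots D_{d-1}=0$, $\Delta(Q_i)\cdot D_2\cdots D_{d-1}=0$ from the Bogomolov--Hodge argument used in the proof of Theorem~\ref{loc-free}. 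Iterated application of Theorem~\ref{bogomolov} restricts each $Q_i$ to a general smooth complete-intersection surface $S\subset X$ (cut out by members of the $|n D_j|$), preserving strong stability and the relevant vanishings. The surface case of Theorem~\ref{loc-free} gives $Q_i|_S$ locally free with vanishing Chern classes, so by Bertini (varying $S$ through every point of $X$) each $Q_i$ is locally free on $X$; restricting once more to curves yields that $Q_i$ is numerically flat. Numerical flatness is preserved under extensions and is independent of polarization, so $E$ is numerically flat, locally free, and strongly semistable with respect to every collection of ample divisors.

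The main obstacle is this last step: vanishing of Chern numbers with respect to one polarization does not directly imply strong semistability with respect to any other polarization, and the natural (and essentially necessary) bridge is numerical flatness. Verifying numerical flatness requires controlling the behaviour of $E$ on every smooth projective curve mapped to $X$, not merely on iterated hyperplane sections of the given $D_i$'s, so the delicate work lies in combining Theorem~\ref{bogomolov}, the surface case of Theorem~\ref{loc-free}, and openness/Bertini arguments to reduce the curve-semistability question to one that is already under control.
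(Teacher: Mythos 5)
Your first half (deducing $\Delta(E)\cdot D_2\cdots D_{d-1}=0$ via Bogomolov plus Hodge index, then killing all Chern classes by boundedness of $\{(F^n)^*E\}$ and Lemma \ref{obvious}) matches what the paper does. For the polarization-independence you take a genuinely different route from the paper: the paper reduces to a surface by Theorem \ref{bogomolov}, assumes $E$ strongly $H$-stable but not strongly $A$-semistable, and runs a wall-crossing argument along $H_t=(1-t)H+tA$, producing at the critical $t$ a subsheaf and quotient that are strongly $H_t$-semistable with $\Delta=0$, hence (by Theorem \ref{loc-free}) have vanishing Chern classes, contradicting stability. Your route instead goes through numerical flatness, which is indeed polarization-independent; this is essentially the alternative proof the paper itself sketches in the remark after Proposition \ref{num-nef}.

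However, as written your argument has a genuine gap exactly at the step you yourself flag as delicate. You assert that ``restricting once more to curves yields that $Q_i$ is numerically flat,'' offering Theorem \ref{bogomolov}, the surface case, and Bertini as the tools. These control restrictions of $Q_i$ to divisors and complete intersections inside the fixed linear systems $|nD_j|$; nefness of $Q_i$ and $Q_i^*$ requires $\mu_{\min}(f^*Q_i)\ge 0$ for \emph{every} finite morphism $f:C\to X$ from a smooth projective curve, including maps whose images are not complete intersections, and no combination of restriction theorems and Bertini reaches those. The missing idea is the one used in the proof of Proposition \ref{num-nef} (1 $\Rightarrow$ 2): since the family $\{(F^n)^*Q_i\}_n$ is bounded, a single ample line bundle $L$ makes all $(F^n)^*Q_i\otimes L$ globally generated, so $\mu_{\min}\bigl(f^*((F^n)^*Q_i)\bigr)\ge -\deg f^*L$ for every such $f$, and dividing by $p^n$ and letting $n\to\infty$ gives nefness; the same applies to the duals. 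Without this (or an equivalent), the conclusion that $E$ is numerically flat, and hence strongly semistable for every polarization, does not follow. A secondary caution: the paper explicitly warns that boundedness with respect to a collection of polarizations is technically problematic, which is why it first cuts down to a surface before invoking any boundedness; your direct appeal to a ``multi-polarized analog of $\cT(r,0,0;0)$'' should either be justified from \cite{La2} or replaced by the same reduction to the surface case.
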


\begin{proof}
The first assertion can be proven as in the above theorem.  So it
is sufficient to prove that for any ample divisor $A$ the sheaf
$E$ is strongly $(A,D_2,\dots, D_{d-1})$-semistable. We can assume
that $D_2,\dots D_{d-1}$ are very ample. Taking a general complete
intersection of divisors in $|D_2|,\dots, |D_{d-1}|$ and using
Theorem \ref{bogomolov} we see that it is sufficient to prove the
assertion in  the surface case. In the following we assume that
$d=2$ and set $H=D_1$. Taking the Jordan--H\"older filtration of
some Frobenius pull-back of $E$ we can also assume that $E$ is in
fact strongly $H$-stable.

Let us consider the family $\F $ of all sheaves $E'$ such that
$\mu _A(E')>0$ and there exists a non-negative integer $n$ such
that $E'\subset (F^n)^*E$ and the quotient $(F^n)^*E/E'$ is
torsion free. Let us set $H_t=(1-t)H+tA$ for $t\in [0,1]$. Since
the family $\{(F^n)^*E\} _n$ is bounded, the family  $\F$ is also
bounded by Grothendieck's lemma \cite[Lemma 1.7.9]{HL}. Therefore
there exists the largest rational number $0<t<1$ such that for all
sheaves $E'\in \F$ we have $\mu_{H_t} (E')\le 0$ (note that by
assumption $\mu_H(E')<0$). Then there exists a sheaf $E'\in \F$
such that $\mu _{H_t}(E')=0$.

If $E'$ is not strongly $H_t$-semistable then there exist an
integer $l$ and a saturated subsheaf $E_1'\subset (F^l)^*E'$ such
that $\mu _{H_t} (E'_1)> \mu _{H_t} ((F^l)^* E')=0$. But $E'_1\in
\F$ so we have a contradiction with our choice of $t$. Therefore
$E'$ is strongly $H_t$-semistable.

Let us take integer $n_0$ such that $E'\subset (F^{n_0})^*E$.
Similarly as above we can prove that the quotient
$E''=(F^{n_0})^*E/E'$ is strongly $H_t$-semistable. Namely, if
$E''$ is not strongly $H_t$-semistable then there exist an integer
$l$ and a quotient sheaf $(F^l)^*E''\to E''_1$ such that $\mu
_{H_t} (E''_1)< \mu _{H_t} ((F^l)^* E'')=0$. But then the kernel
of $(F^{l+n_0})^* E\to E''_1$ is in $\F$ and it has positive slope
with respect to $H_t$ which contradicts our choice of $t$.

Therefore all the sheaves in the following exact sequence
$$0\to E'\to (F^{n_0})^*E \to E''\to 0$$
are strongly $H_t$-semistable with $H_t$-slope equal to $0$. Now
let us recall that by the Hodge index theorem we have
\begin{eqnarray*}
0={\Delta ((F^{n_0})^* E)\over r}&=& \frac{\Delta (E')}{r'}
+\frac{\Delta (E'')}{r''} -\frac{r'r''}{r}\left( \frac{c_1E'}
{r'}-\frac{c_1E''}{r''}\right) ^2\\
&\ge& \frac{\Delta (E')}{r'} +\frac{\Delta (E'')}{r''}
-\frac{r'r''}{rH_t^2}(\mu _{H_t}(E')
-\mu _{H_t}(E'')) .\\
\end{eqnarray*}
But by \cite[Theorem 3.2]{La1} we have $\Delta (E')\ge 0$, $\Delta
(E'')\ge 0$. Since $\mu _{H_t}(E')=\mu _{H_t}(E'')=0$ we see that
both $\Delta (E')$ and $\Delta (E'')$ are equal to $0$. Therefore
by Theorem \ref{loc-free}  both $E'$ and $E''$ have vanishing
Chern classes which contradicts strong $H$-stability of $E$.
\end{proof}

\medskip
\begin{Remark}
Note that nefness of $D_1$ is not sufficient to get the assertion
of the theorem. For example, if $X$ is a surface and $D_1$ is a
numerically non-trivial nef divisor with $D_1^2=0$ (e.g., a fibre
of a morphism of $X$ onto a curve) then the family
$\{\O_X(nD_1)\oplus \O_X(-nD_1)\} _{n}$ is not bounded although it
consists of strongly $D_1$-semistable locally free sheaves with
$\ch _1 \cdot D_1=0$ and $\ch _2 =0$.
\end{Remark}
\medskip

\section{Comparison with numerically flat bundles}

In this section we compare strongly semistable vector bundles with vanishing Chern classes
with  numerically flat vector bundles and we show that they
can be used to define a Tannaka category.

\medskip

Let $\Vect (X)$ denotes the full subcategory of the category of
coherent sheaves on $X$, having as objects all strongly
$H$-semistable reflexive sheaves with  $\ch _1(E)\cdot H^{d-1}=0$
and $\ch _2 (E)\cdot H^{d-2}=0$. By Proposition
\ref{independence}, $\Vect (X)$ does not depend on the choice of
$H$ so we do not include it into notation.

Let us mention that in the complex case the above category can be
identified with the category of numerically flat vector bundles
(see Theorem \ref{Simp} and \cite[Theorem 1.18]{DPS}). The author
does not know a direct purely algebraic proof of this equivalence
(over $\CC$). A similar characterization holds also in positive
characteristic:

\begin{Proposition} \label{num-nef}
Let $X$ be a smooth projective $k$-variety. Let $E$ be a coherent
sheaf on $X$. Then the following conditions are equivalent:
\begin{enumerate}
\item $E\in \Vect (X)$,
\item $E$ is numerically flat  (in particular, it is locally free),
\item $E$ is nef of degree $0$ with respect to some ample divisor
(in particular, it is locally free).
\end{enumerate}
\end{Proposition}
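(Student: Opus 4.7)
I would prove the cyclic implications $(2)\Rightarrow(3)\Rightarrow(1)\Rightarrow(2)$. The first is immediate: numerical flatness asserts that $E$ and $E^\ast$ are both nef, and the two inequalities $c_1(E)\cdot H^{d-1}\ge 0$ and $c_1(E^\ast)\cdot H^{d-1}=-c_1(E)\cdot H^{d-1}\ge 0$ together force $\mu_H(E)=0$.

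For $(3)\Rightarrow(1)$, a torsion-free quotient of a nef bundle is nef (Subsection 1.2), hence of nonnegative slope; this rules out slope-destabilizing quotients of $E$, giving $H$-semistability. Since pullback by the surjective Frobenius preserves nefness, each $(F^n)^\ast E$ is still nef of degree zero, hence semistable, and $E$ is strongly $H$-semistable. The remaining condition $\ch _2(E)\cdot H^{d-2}=0$ I would obtain by combining the Fulton--Lazarsfeld positivity of the Schur classes $s_{(1,1)}(c(E))=c_2(E)$ and $s_{(2)}(c(E))=c_1(E)^2-c_2(E)$ of the nef bundle $E$ --- which yields $c_2(E)\cdot H^{d-2}\ge 0$ and $(c_1(E)^2-c_2(E))\cdot H^{d-2}\ge 0$ --- with the Hodge index inequality $c_1(E)^2\cdot H^{d-2}\le 0$ (available because $c_1(E)\cdot H^{d-1}=0$); these three force $c_1(E)^2\cdot H^{d-2}=c_2(E)\cdot H^{d-2}=0$.

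The substantive step, and the main obstacle, is $(1)\Rightarrow(2)$. By Theorem~\ref{loc-free}, $E$ is locally free with $c_1(E)=0$ and $\Delta(E)=0$ numerically, so the whole orbit $\{(F^n)^\ast E\}_{n\in\NN}$ lies inside a single family of the form $\cT_{X/k}(r,0,\Delta_0;0)$ and is bounded by Theorem~\ref{refl-bound}. To verify the curve criterion for nefness of $E$, take any $f:C\to X$ from a smooth projective curve. Functoriality of Frobenius gives $(F_C^n)^\ast f^\ast E=f^\ast(F_X^n)^\ast E$, and pulling the bounded family back along $f$ produces a bounded family on $C$, so by Kleiman's criterion $\mu _{\max}((F_C^n)^\ast f^\ast E)$ stays uniformly bounded in $n$. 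On the other hand Langer's asymptotic formula \cite[Theorem 2.7]{La1} gives $\mu _{\max}((F_C^n)^\ast f^\ast E)=p^n L_{\max}(f^\ast E)$ for all large $n$, so boundedness forces $L_{\max}(f^\ast E)\le 0$; combined with $L_{\max}(f^\ast E)\ge\mu(f^\ast E)=0$, this yields $L_{\max}(f^\ast E)=0$ and hence $\mu _{\max}(f^\ast E)=\mu(f^\ast E)=0$. Thus $f^\ast E$ is semistable of degree zero on $C$, so nef. Since $E^\ast$ again belongs to $\Vect(X)$ --- duality preserves strong semistability and only reverses signs of Chern classes, leaving the vanishing conditions intact --- the identical argument shows $E^\ast$ is nef. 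Therefore $E$ is numerically flat.

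The main obstacle is precisely this bridging of the algebraic Chern-class vanishing in (1) to the pointwise geometric positivity on every curve $f:C\to X$ required by (2). The strong boundedness of Theorem~\ref{refl-bound}, transported along $f$, is incompatible with the exponential $\mu _{\max}$-growth under Frobenius that Langer's theory attaches to non-semistable sheaves on $C$, and this rigidity is exactly what forces nefness of $f^\ast E$ (and of $f^\ast E^\ast$).
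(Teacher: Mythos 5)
Your proposal is correct and follows essentially the same route as the paper: the implications $(2)\Rightarrow(3)$ and $(3)\Rightarrow(1)$ are handled exactly as in the text (Fulton--Lazarsfeld positivity of $c_2$ and $c_1^2-c_2$ for nef bundles plus the Hodge index inequality), and the key step $(1)\Rightarrow(2)$ rests on the same idea, namely that boundedness of the Frobenius orbit $\{(F^n)^*E\}$ is incompatible with the exponential slope growth that would occur if some $f^*E$ failed to be semistable of degree zero. The only cosmetic difference is that the paper converts boundedness into uniform global generation of $(F^n)^*E\otimes L$ and bounds $\mu_{\min}(f^*(F^n)^*E)$ from below by $-\deg f^*L$ before dividing by $p^n$, whereas you bound $\mu_{\max}$ of the pulled-back family from above and invoke \cite[Theorem 2.7]{La1}; both versions work.
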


\begin{proof}
First we prove that 1 implies 2. If $E\in \Vect (X)$ then the
family $\{(F^n)^*E\} _n$ is bounded, so there exists an ample line
bundle $L$ on $X$ such that $(F^n)^*E\otimes L$ is globally
generated for $n=0,1,\dots$ Therefore for any smooth projective
curve $C$ and a finite morphism $f:C\to X$ the bundles
$f^*((F^n)^*E\otimes L)$ are globally generated. In particular,
$\mu_{\min}(f^*((F^n)^*E\otimes L))\ge 0$. Therefore for all $n\ge
0$
$$ -\deg f^*L\le \mu_{\min}(f^*((F^n)^*E))\le p^n \mu_{\min} (f^*E).$$
Dividing by $p^n$ and passing with $n$ to infinity we get $\mu
_{\min } (f^*E)\ge 0$. Therefore $E$ is nef. Since $E^*\in \Vect
(X)$, $E^*$ is also nef.

To prove that 2 implies 3 we take $E$ such that both $E$ and $E^*$
are nef.  Let us fix some ample divisor $H$ on $X$. Let us remark
that if some polynomial in Chern classes of ample vector bundle is
positive (see \cite[p.~35]{FL} for the definition) then it is also
non-negative for nef vector bundles. Therefore by \cite[Theorem
I]{FL} $c_1 \cdot H^{d-1}, c_2\cdot H^{d-2}, (c_1^2-c_2)\cdot
H^{d-2}$ are non-negative for all nef vector bundles. In
particular, from $c_1(E)H^{d-1}\ge 0$ and $c_1(E^*)H^{d-1}\ge 0$
we get $c_1(E)H^{d-1}= 0$.

To prove that 3 implies 1 note that $E$ is strongly semistable
with respect to all polarizations. By assumption and the Hodge
index theorem we have
$$0\le c_1^2(E)H^{d-2}\cdot H^d\le (c_1(E)H^{d-1})^2=0.$$
Hence from non-negativity of $c_2\cdot H^{d-2}, (c_1^2-c_2)\cdot
H^{d-2}$ we see that $c_2(E) H^{d-2}$ is equal to $0$. Therefore
by definition $E\in \Vect (X)$.
\end{proof}

\medskip

\begin{Remark}
Note that the condition that a locally free sheaf $E$ is
numerically flat is equivalent to the condition that for all
smooth curves $C$ and all maps $f:C\to X$ the pull back $f^*E$ is
semistable of degree $0$. In this case one sometimes says that $E$
is \emph{Nori semistable} (see, e.g., \cite{Me}). Obviously, this
is completely fair as Nori made huge contributions in the subject
but it should be noted that in \cite{No} Nori considered a
slightly different condition. Namely, he considered locally free
sheaves $E$ such that for all smooth curves $C$ and all birational
maps $f:C\to X$ onto its image, the pull back $f^*E$ is semistable
of degree $0$ (see \cite[p.~81, Definition]{No}). In positive
characteristic such sheaves do not form a tensor category.
\end{Remark}

\medskip

Note that the proof of the above proposition gives another proof
of Proposition \ref{independence}. As in the proof of Proposition
\ref{independence} we can restrict to the surface case so that we
deal with only one ample divisor when the above proof shows the
assertion (in general however, there are technical problems with
boundedness with respect to collection of polarizations).

\medskip

Proposition \ref{num-nef} allows us to define $\Vect (X)$ for
complete $k$-schemes. Then $\Vect (X)$ denotes the full
subcategory of the category of coherent sheaves on $X$, which as
objects contains all numerically flat locally free sheaves. If $X$
is smooth and projective then by Proposition \ref{num-nef} this
gives the same category as before.

The following corollary is a generalization of Theorem
\ref{bogomolov}:

\begin{Corollary} \label{strong-bogomolov}
\textup{(Very strong restriction theorem)} Let $X$ be a complete
$k$-scheme and let $E\in \Vect (X)$. Then for any closed subscheme
$Y\subset X$ the restriction $E_Y$ is in $ \Vect (Y)$.
\end{Corollary}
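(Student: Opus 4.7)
The plan is to reduce the statement to the characterization of $\Vect$ via numerical flatness. For complete $k$-schemes the paper defines $\Vect(X)$ as the full subcategory of coherent sheaves whose objects are numerically flat locally free sheaves; hence I need to show that if $E$ on $X$ is numerically flat and locally free, then $E_Y = i^*E$ is numerically flat and locally free on $Y$, where $i: Y\hookrightarrow X$ is the closed immersion.

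Local freeness of $E_Y$ is immediate from local freeness of $E$, since pullback of a locally free sheaf along any morphism (in particular a closed immersion) is locally free. Moreover, pullback commutes with dualization for locally free sheaves, so $(E_Y)^* = (E^*)_Y$. It therefore suffices to prove that if a locally free sheaf $F$ on $X$ is nef, then $i^*F$ is nef on $Y$: applying this to both $E$ and $E^*$ then yields the numerical flatness of $E_Y$.

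To establish this, I would use the definition directly. Saying $F$ is nef means $\O_{\PP(F)}(1)$ is nef on $\PP(F)$. Formation of projective bundles commutes with base change, so $\PP(F_Y) = \PP(F)\times_X Y$ and the induced morphism $\tilde{\imath}: \PP(F_Y)\to \PP(F)$ satisfies $\tilde{\imath}^*\O_{\PP(F)}(1) = \O_{\PP(F_Y)}(1)$. Hence it is enough to observe that the pullback of a nef line bundle along any morphism of complete $k$-schemes is nef: for any morphism $g: C\to \PP(F_Y)$ from a smooth projective curve, the composition $\tilde{\imath}\circ g : C\to \PP(F)$ gives $\deg_C \tilde{\imath}^*\O_{\PP(F)}(1) = \deg_C (\tilde{\imath}\circ g)^*\O_{\PP(F)}(1)\ge 0$ by nefness of $\O_{\PP(F)}(1)$.

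The (only) point to be careful about is that the Nefness subsection quotes preservation of nefness under pullback only for \emph{surjective} morphisms; what is needed here is the one direction (pullback of nef is nef) for arbitrary morphisms, which is the easy half and follows at once from the curve criterion above. Once that is noted, the corollary is immediate and no further use of the deeper results (Theorems \ref{bogomolov}, \ref{loc-free}, Proposition \ref{num-nef}) is required beyond invoking the equivalent characterization of $\Vect(X)$.
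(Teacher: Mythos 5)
Your proof is correct and follows exactly the route the paper intends: the paper offers no written proof because, once $\Vect(X)$ is defined via numerical flatness (Proposition \ref{num-nef}), the statement is immediate from the fact that restriction of a nef bundle to a closed subscheme is nef. Your careful note that one only needs the easy direction of pullback-preservation of nefness (for arbitrary morphisms, via the curve criterion) is exactly the right observation, so nothing is missing.
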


Clearly, $E\in \Vect (X)$ if and only if the restriction of $E$ to
every curve $C$  belongs to the category  $\Vect (C)$. This gives
relation with the category considered by Nori in \cite{No}.

By \cite[Proposition 3.5]{Ba} tensor product of two nef sheaves is
nef. Therefore we have the following corollary:

\begin{Corollary}
Let $X$ be a complete $k$-scheme. If $E_1,E_2\in \Vect (X)$ then
$E_1\otimes E_2\in \Vect (X)$.
\end{Corollary}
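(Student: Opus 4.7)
The plan is to unpack the definition of $\Vect(X)$ and reduce the statement to the nefness-preserving property of tensor product cited just before the corollary. By the definition valid on a general complete $k$-scheme, $E \in \Vect(X)$ exactly when $E$ is a locally free sheaf such that both $E$ and $E^*$ are nef. So given $E_1, E_2 \in \Vect(X)$, I would first note that local freeness is preserved by tensor product, so $E_1 \otimes E_2$ is automatically locally free, reducing the problem to verifying numerical flatness.

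Next I would directly apply the Barton-type statement recalled in the paragraph before the corollary, namely \cite[Proposition 3.5]{Ba}: the tensor product of two nef locally free sheaves is nef. Since $E_1$ and $E_2$ are both nef, this yields that $E_1 \otimes E_2$ is nef.

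For the dual side, I would use the natural isomorphism $(E_1 \otimes E_2)^* \simeq E_1^* \otimes E_2^*$, which holds since $E_1$ and $E_2$ are locally free. Because $E_1^*$ and $E_2^*$ are nef by hypothesis, the same citation gives that $E_1^* \otimes E_2^*$ is nef, and hence so is $(E_1 \otimes E_2)^*$. Combining the two nefness statements, $E_1 \otimes E_2$ is numerically flat, i.e., lies in $\Vect(X)$.

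There is essentially no obstacle here beyond invoking the cited result; the genuine content sits in \cite[Proposition 3.5]{Ba} and in Proposition \ref{num-nef}, which recast $\Vect(X)$ in terms of a property (nefness) that is itself preserved by tensor product. Once those two inputs are in place, the argument is a one-line combination, which is exactly why the corollary was stated as a consequence rather than as a self-standing theorem.
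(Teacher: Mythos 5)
Your proof is correct and follows exactly the route the paper intends: the corollary is stated as an immediate consequence of Barton's result that the tensor product of nef locally free sheaves is nef, applied to both $E_1\otimes E_2$ and $(E_1\otimes E_2)^*\simeq E_1^*\otimes E_2^*$, using the characterization of $\Vect(X)$ as the numerically flat locally free sheaves. Nothing is missing.
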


\begin{Proposition} \label{Tannaka}
Let $X$ be a complete connected reduced $k$-scheme. Then $\Vect
(X)$ is a rigid $k$-linear abelian tensor category.
\end{Proposition}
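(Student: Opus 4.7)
My plan is to check separately the $k$-linearity, the rigid tensor structure, and the abelian structure, the last being the heart of the matter. The $k$-linearity is automatic since $\Vect (X)$ is a full subcategory of the category of coherent $\O_X$-modules. The structure sheaf $\O_X$ lies in $\Vect (X)$ and serves as the unit object. Tensor products remain in $\Vect (X)$ by the preceding corollary, and duals lie in $\Vect (X)$ by the very definition of numerical flatness. The rigid structure is then provided by the standard evaluation $E\otimes E^{*}\to \O_X$ and coevaluation $\O_X\to E^{*}\otimes E$ for locally free sheaves.

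The main task is to show that $\Vect (X)$ is abelian as a subcategory of coherent sheaves, i.e.\ that for every morphism $f:E_1\to E_2$ in $\Vect (X)$, the kernel, image, and cokernel lie in $\Vect (X)$. I would first settle this on a smooth projective curve $C$: the objects of $\Vect (C)$ are locally free semistable sheaves of degree $0$, and a standard slope argument shows that $\im f$ is a torsion free, saturated subsheaf of $E_2$ of slope zero; consequently $\ker f$, $\im f$, and $\coker f$ are all locally free and semistable of degree $0$, hence in $\Vect (C)$.

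For general $X$ my strategy is to show that the rank of $f$ at closed points is constant. For any smooth projective curve and any morphism $\phi :C\to X$, Corollary \ref{strong-bogomolov} yields $\phi^{*}E_i\in \Vect (C)$, so by the curve case $\phi^{*}f$ has constant rank on $C$. Since $X$ is complete and connected, any two closed points can be joined by a chain of irreducible curves (passing to smooth projective normalizations, and applying Chow's lemma if $X$ is not already projective), which forces the rank of $f$ to be constant, say equal to $r$. Reducedness of $X$ then implies that all $(r+1)\times (r+1)$ minors of $f$ vanish identically while some $r\times r$ minor is a unit in a neighborhood of each closed point; local Gaussian elimination puts $f$ into the standard rank $r$ normal form, so $\ker f$, $\im f$, $\coker f$ are locally free. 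To see they are numerically flat it suffices to check this after restriction to any smooth projective curve $\phi :C\to X$: the short exact sequences coming from $\ker f$, $\im f$, $\coker f$ consist of locally free sheaves and so remain exact under $\phi^{*}$, identifying the pullbacks with $\ker(\phi^{*}f)$, $\im (\phi^{*}f)$, $\coker (\phi^{*}f)$ in $\Vect (C)$; since numerical flatness can be tested on curves, this gives the required membership in $\Vect (X)$.

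The most delicate step I anticipate is the chain of curves argument connecting closed points of $X$ when $X$ is complete but not projective; this is handled by invoking Chow's lemma to reduce to the projective case, where a standard Bertini type argument applies.
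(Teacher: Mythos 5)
Your proposal is correct and takes essentially the same route as the paper, which likewise reduces local freeness of $\ker\varphi$, $\im\varphi$, $\coker\varphi$ to the constant-rank argument on curves (citing the proof of Nori's Lemma 3.6) and then obtains numerical flatness of $\im\varphi$ from the surjections $E_1\to\im\varphi$ and $E_2^{*}\to(\im\varphi)^{*}$ together with the fact that quotients of nef bundles are nef. The only slip is that in positive characteristic $\Vect (C)$ for a smooth projective curve $C$ consists of the \emph{strongly} semistable degree-zero bundles rather than the merely semistable ones; since kernels, images and cokernels of maps of locally free sheaves commute with Frobenius pullback, your slope argument applies to all Frobenius pullbacks and nothing is lost.
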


\begin{proof}
By the above corollary $\Vect (X)$ is a tensor category. To check
that it is abelian, it is sufficient to check that for any
homomorphism $\varphi : E_1\to E_2$ between objects $E_1$ and
$E_2$ of $\Vect (X)$ its kernel and cokernel is still in the same
category. Restricting to curves it is easy to see that $\ker
\varphi$, $\im \varphi $ and $\coker \varphi$ are locally free
(see, e.g., \cite[proof of Lemma 3.6]{No}). Since quotients of nef
bundles are nef and since we have surjections $E_1\to \im \varphi$
and $E_2^*\to (\im \varphi )^*$, $\im \varphi$ is numerically
flat. This  implies that $\ker \varphi$ and $\coker \varphi$ are
also numerically flat.
\end{proof}

For definitions and basic properties of rigid, tensor and
Tannakian categories we refer the reader to \cite{DM}.

\section{Fundamental groups in positive characteristic}

In this section we generalize the notion of S-fundamental group
scheme, defined in the curve case by Biswas, Parameswaran and
Subramanian in \cite[Section 5]{BPS}, and we compare it with other
known fundamental group schemes.

\medskip

Let $X$ be a complete connected reduced $k$-scheme and let $x\in X
$ be a $k$-point. Let  us define the fibre functor $T_x: \Vect
(X)\to k-\mod$ by sending $E$ to its fibre $E(x)$. Then $(\Vect
(X), \otimes ,T_x, \O_X)$ is a neutral Tannaka category (see
Proposition \ref{Tannaka}). Therefore by \cite[Theorem 2.11]{DM}
the following definition makes sense:

\begin{Definition} \label{Def}
The affine $k$-group scheme  Tannaka dual to this neutral Tannaka
category is denoted by $\pi^S_1(X,x)$ and it is called the
\emph{S-fundamental group scheme} of $X$ with base point $x$.
\end{Definition}

By definition, there exists an equivalence of categories $\Vect
(X)\to \pi^S_1(X,x)-\mod$ such that $T_x$ becomes a forgetful
functor for $\pi^S_1(X,x)$-modules. Inverse of this equivalence
defines  a principal $\pi^S_1(X,x)$-bundle $\ti X^S\to X$, called
the \emph{S-universal covering}, which to a $\pi^S_1(X,x)$-module
associates a numerically flat vector bundle.

Let $\pi _1^N(X,x)$ and $ \pi_1^{Et}(X,x)$ denote Nori and \'etale
fundamental group schemes, respectively. Using \cite[Proposition
2.21 (a)]{DM} it is easy to see that the following lemma holds:

\begin{Lemma}
There exist natural faithfully flat homomorphisms $\pi^S_1(X,x)\to
\pi _1^N(X,x)\to \pi_1^{Et}(X,x)$ of affine group schemes.
\end{Lemma}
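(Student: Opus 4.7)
The plan is to apply \cite[Proposition 2.21(a)]{DM}, which asserts that for an exact tensor functor $\iota: \mathcal{C}' \to \mathcal{C}$ between neutral Tannakian categories compatible with fibre functors, the induced homomorphism on Tannaka duals is faithfully flat provided (i) $\iota$ is fully faithful and (ii) every subobject in $\mathcal{C}$ of an object coming from $\mathcal{C}'$ is itself isomorphic to the image of a subobject in $\mathcal{C}'$. Accordingly, it suffices to exhibit the Tannakian category $\mathcal{C}_N$ of essentially finite bundles (dual to $\pi_1^N(X,x)$) and the Tannakian category $\mathcal{C}_{Et}$ of étale-trivializable bundles (dual to $\pi_1^{Et}(X,x)$) as full tensor subcategories closed under subobjects in the chain $\mathcal{C}_{Et} \subset \mathcal{C}_N \subset \Vect(X)$.

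For the inclusion $\mathcal{C}_N \subset \Vect(X)$, the key point is that every essentially finite bundle is numerically flat. If $V$ is finite, then by definition the isomorphism classes of its tensor powers, and hence of the Frobenius pullbacks $\{(F^n)^*V\}_{n\ge 0}$, form a finite set; since $c_i((F^n)^*V)=p^{ni}c_i(V)$, this forces all Chern classes of $V$ to vanish numerically. Moreover all Frobenius pullbacks of a finite bundle remain finite and hence semistable (by Nori), so $V$ is strongly semistable with vanishing Chern classes, hence in $\Vect(X)$ by Proposition \ref{num-nef}. Since $\Vect(X)$ is closed under subquotients (Proposition \ref{Tannaka}) and essentially finite bundles are by definition subquotients of finite bundles, we get $\mathcal{C}_N \subset \Vect(X)$ as a full tensor subcategory. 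The closure under subobjects required by condition (ii) follows from Nori's theorem that $\mathcal{C}_N$ is an abelian subcategory of coherent sheaves stable under taking locally free coherent subsheaves: any subobject in $\Vect(X)$ is in particular a locally free coherent subsheaf, so essentially finite.

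For the inclusion $\mathcal{C}_{Et} \subset \mathcal{C}_N$, representations of $\pi_1^{Et}(X,x)$ correspond to bundles trivialized by some finite étale cover; these are precisely the essentially finite bundles associated to principal bundles under \emph{étale} finite group schemes, and form a full tensor subcategory closed under subobjects in $\mathcal{C}_N$ (if a subobject of such a bundle were trivialized only by an inseparable cover, the corresponding quotient of $\pi_1^N$ would fail to be étale, a contradiction). Two applications of \cite[Proposition 2.21(a)]{DM} then produce the desired faithfully flat homomorphisms.

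The only nontrivial verification is the subobject condition in the first inclusion; this reduces to Nori's structural result on the category of essentially finite bundles, together with the observation that numerical flatness is automatically preserved when passing to a locally free coherent subsheaf of an essentially finite bundle, since such a subsheaf is then essentially finite in Nori's sense and therefore already numerically flat by the argument above.
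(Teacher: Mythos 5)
Your overall strategy is exactly the one the paper intends: the paper's ``proof'' consists of the single remark that the lemma follows from \cite[Proposition 2.21(a)]{DM}, and your proposal correctly identifies the two things that criterion requires (full faithfulness and closure under subobjects) and where the real content lies, namely that essentially finite bundles land in $\Vect(X)$ and that the category of essentially finite bundles is closed under subobjects taken in the ambient abelian category of numerically flat bundles. The second and third inclusions are handled correctly.

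One sub-step is wrong as literally written. You assert that for a finite bundle $V$ ``the isomorphism classes of its tensor powers, and hence of the Frobenius pullbacks $\{(F^n)^*V\}_{n\ge 0}$, form a finite set.'' The tensor powers of $V$ have unbounded rank, so their isomorphism classes certainly do not form a finite set; what Nori's finiteness condition gives is that the \emph{indecomposable summands} of all tensor powers form a finite set. Moreover, $(F^n)^*V$ is not a tensor power of $V$, so even the corrected statement does not by itself control the Frobenius pullbacks, and the deduction $c_i((F^n)^*V)=p^{ni}c_i(V)$ with boundedness is not yet available (that boundedness is exactly what one is trying to establish via the vanishing of Chern classes, so the argument as written is close to circular). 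The standard repair is to use Nori's characterization of finite bundles as those trivialized by a torsor $\pi:P\to X$ under a finite group scheme: then $\pi^*c_i(V)=c_i(\pi^*V)=0$, and since $\pi$ is finite, flat and surjective, the projection formula shows $\pi^*$ is injective on numerical classes, so $c_i(V)=0$ in $N_*X$. Combined with Nori's result that (essentially) finite bundles restrict to semistable degree-$0$ bundles on every curve, Proposition \ref{num-nef} then places them in $\Vect(X)$; alternatively, one sees directly that $\pi^*V$ trivial forces $V$ numerically flat. With this replacement your argument goes through, and the closure-under-subobjects step is, as you say, Nori's theorem that the essentially finite bundles form a thick abelian subcategory of the degree-$0$ semistable bundles.
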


Since on curves there exist strongly stable vector bundles of
degree zero and rank $r>1$ (such vector bundles are numerically
flat but not essentially finite), $\pi^S_1(X,x)\to \pi _1^N(X,x)$
is usually not an isomorphism. In fact, already non-torsion line
bundles of degree $0$ show that the S-fundamental group scheme is
usually much larger that Nori or \'etale fundamental group
schemes.

\medskip

By definition and \cite[Corollary 2.7]{DM} $\pi^S_1(X,x)$ is
isomorphic to the inverse limit of the directed system of
representations $\rho: \pi^S_1(X,x)\to G$ in algebraic $k$-group
schemes $G$, such that the image of $\rho$ is Zariski dense in
$G$. If we restrict to representations of $\pi^S_1(X,x)$ in finite
group schemes or in \'etale finite group schemes then we obtain
$\pi _1^N(X,x)$ and $ \pi_1^{Et}(X,x)$, respectively. We can
summarize this using the following obvious lemma. The formulation
for the \'etale fundamental group is left to the reader.

\begin{Lemma} \label{universal}
$\pi _1^N(X,x)$ is characterized by the following universal property:
for any representation $\rho: \pi^S_1(X,x)\to G$ in a finite $k$-group
scheme $G$, there is a unique extension to $\overline{\rho}: \pi^N_1(X,x)\to G$
such that the diagram
$$ \xymatrix{ & \pi^S_1(X,x)\ar[r]^-{\rho} \ar[d] & G\\  &\pi^N_1(X,x)\ar[ur]_-{\overline{\rho}}
&\\}$$
is commutative.
\end{Lemma}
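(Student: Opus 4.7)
The plan is to apply Tannakian duality throughout and exploit the fact that \cite[Proposition 2.21]{DM} lets me translate faithfully flat morphisms of affine group schemes into full tensor subcategories. First, given $\rho: \pi_1^S(X,x) \to G$ with $G$ finite, I would replace $G$ by the scheme-theoretic image of $\rho$, which is a closed subgroup scheme of $G$ and hence still finite. Thus without loss of generality I may assume that $\rho$ is faithfully flat.

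By Tannakian duality such a $\rho$ corresponds to a fully faithful exact $k$-linear tensor embedding $\iota_\rho: \mathop{\rm Rep}_k(G) \hookrightarrow \Vect(X)$ compatible with the fibre functors. The key step is to show that the essential image of $\iota_\rho$ lies inside Nori's full tensor subcategory $\mathcal{N}(X) \subset \Vect(X)$ of essentially finite bundles. Concretely, if $V$ is a finite-dimensional representation of the finite group scheme $G$, then $\mathop{\rm Rep}_k(G)$ has only finitely many isomorphism classes of indecomposable objects, so among the tensor powers $V^{\otimes n}$, $n \in \NN$, only finitely many indecomposable summands ever occur. This yields a Nori-type relation $f(V) \simeq g(V)$ for two distinct polynomials $f, g \in \NN[t]$, and the relation is preserved by the tensor functor $\iota_\rho$. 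Hence $\iota_\rho(V)$ is a finite, and in particular essentially finite, vector bundle on $X$ in the sense of Nori \cite{No}, so $\iota_\rho$ factors through the inclusion $\mathcal{N}(X) \hookrightarrow \Vect(X)$.

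Dualizing the factorization $\mathop{\rm Rep}_k(G) \hookrightarrow \mathcal{N}(X) \hookrightarrow \Vect(X)$ via \cite[Proposition 2.21]{DM} produces a faithfully flat homomorphism $\overline{\rho}: \pi_1^N(X,x) \to G$ whose composition with the canonical surjection $\pi_1^S(X,x) \to \pi_1^N(X,x)$ recovers $\rho$. Composing with the original closed immersion of the image of $\rho$ back into $G$ gives the required extension into the original $G$. Uniqueness of $\overline{\rho}$ is automatic: since $\pi_1^S(X,x) \to \pi_1^N(X,x)$ is faithfully flat, it is an epimorphism in the category of affine $k$-group schemes, so any two extensions agreeing after precomposition with this map must coincide.

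The only non-formal step is the argument that $\iota_\rho(V)$ is essentially finite, and this is where I expect the main (minor) obstacle to sit; once that finiteness is in place, the rest is a routine unwinding of the Tannakian dictionary of \cite{DM}. The analogous statement for $\pi_1^{Et}(X,x)$ is proved in the same way by further restricting to representations in \'etale finite group schemes, which correspond to essentially finite bundles whose monodromy is \'etale.
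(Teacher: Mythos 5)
The paper itself gives no proof of this lemma (it is declared ``obvious'' and follows from the preceding remark that restricting the inverse system of Zariski-dense representations of $\pi_1^S(X,x)$ to finite group schemes yields $\pi_1^N(X,x)$), so your overall Tannakian strategy --- reduce to $\rho$ faithfully flat, show the corresponding full tensor subcategory of $\Vect(X)$ lands inside the essentially finite bundles, dualize via \cite[Proposition 2.21]{DM}, and get uniqueness from the fact that a faithfully flat map is an epimorphism of affine group schemes --- is the right way to make the claim precise.

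However, your key step contains a genuine error. It is false that $\mathop{\rm Rep}_k(G)$ has only finitely many isomorphism classes of indecomposable objects for a finite group scheme $G$ in characteristic $p$: already $G=(\ZZ/p)^2$ (group algebra $k[x,y]/(x^p,y^p)$) has tame or wild representation type and infinitely many indecomposables. Worse, the weaker conclusion you actually use --- that only finitely many indecomposables occur among the $V^{\otimes n}$, so that $V$ satisfies a Nori relation $f(V)\simeq g(V)$ --- also fails: for the Klein four group in characteristic $2$ and $V=\Omega k$ (the $3$-dimensional Heller shift of the trivial module), one has $\Omega k\otimes \Omega k\cong \Omega^2k\oplus(\mathrm{projective})$, so the pairwise non-isomorphic modules $\Omega^nk$ all occur as summands of tensor powers of $V$; such a $V$ is \emph{not} a finite object, and its associated bundle is essentially finite but not finite. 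The correct argument, which is Nori's, goes through the regular representation: $k[G]\otimes k[G]\cong k[G]^{\oplus \dim_k k[G]}$, so the bundle associated to $k[G]$ is finite in Nori's sense, and every $V$ is a subrepresentation (and a quotient) of $k[G]^{\oplus \dim V}$; since your embedding $\iota_\rho$ is exact and $\Vect(X)$ is abelian, $\iota_\rho(V)$ is a subquotient of a finite bundle inside the category of numerically flat bundles, i.e.\ essentially finite. That is all that is needed, because $\pi_1^N(X,x)$ is dual to the category of \emph{essentially} finite bundles, not merely the finite ones. With this replacement the rest of your argument (factorization, uniqueness, and the \'etale variant) goes through.
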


\medskip

In \cite{dS} dos Santos used \cite{Gi} to introduce another
fundamental group scheme, which we denote by $\pi _1^F(X,x)$. It
is defined as the group scheme Tannaka dual to the Tannakian
category of $\O_X$-coherent ${\cal D}_X$-modules (corresponding to
the so called flat or stratified bundles; see \cite{Gi}).

Let us recall that there exist $\O_X$-coherent ${\cal
D}_X$-modules $(E,\nabla )$ for which $E$ is not semistable (see
\cite[proof of Theorem 1]{Gi2}). Similarly, not every numerically
flat bundle descends infinitely many times under the Frobenius
morphism. Therefore, in general, we cannot expect any natural
homomorphism between $\pi _1^S(X,x)$ and $\pi _1^F(X,x)$. But as
expected from the complex case (see Corollary
\ref{complex-surjection}), if $\mu _{\max} (\Omega_X)<0$ then the
S-fundamental group scheme carries all the algebraic information
about the fundamental group. So in this case we can obtain $\pi
_1^F(X,x)$ from this group scheme:

\begin{Proposition}
Let $X$ be a smooth projective $k$-variety. If $\mu _{\max}
(\Omega_X)<0$ then there exist a natural faithfully flat
homomorphism $\pi^S_1(X,x)\to \pi^F_1(X,x)$.
\end{Proposition}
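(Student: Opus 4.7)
The plan is to apply Tannaka duality: by \cite[Proposition 2.21(b)]{DM}, producing a faithfully flat homomorphism $\pi_1^S(X,x)\to\pi_1^F(X,x)$ amounts to producing a $k$-linear exact tensor functor $\Phi$ from $\O_X$-coherent ${\cal D}_X$-modules to $\Vect(X)$, compatible with the fibre functors at $x$, that is fully faithful and closed under taking subobjects. I take $\Phi$ to be the forgetful functor $(E,\nabla)\mapsto E$; it is manifestly exact, tensor and fibre-functor compatible. The proof reduces to three claims: (i) the underlying bundle of any stratified $(E,\nabla)$ is numerically flat; (ii) $\Phi$ is fully faithful; (iii) every subobject of $\Phi(E,\nabla)$ in $\Vect(X)$ lifts to a sub-stratified bundle.

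For (i), use the description of an $\O_X$-coherent ${\cal D}_X$-module as an infinite Frobenius descent tower $E=F^{n*}E_n$ with compatible isomorphisms. Then $c_i(E)=p^{ni}c_i(E_n)$ lies in $p^{ni}N_*(X)$ for every $n$, so $c_i(E)=0$ by torsion-freeness of $N_*(X)$. Now suppose $F\subset E$ is the maximal destabilizing subsheaf; then $\mu(F)>0$ and the Harder--Narasimhan inequality gives $\mu_{\min}(F)=\mu(F)>\mu_{\max}(E/F)$, so using $\mu_{\max}(\Omega_X)<0$ one obtains
\[
\mu_{\max}((E/F)\otimes\Omega_X)\le\mu_{\max}(E/F)+\mu_{\max}(\Omega_X)<\mu_{\min}(F).
\]
Hence the $\O_X$-linear second fundamental form $F\to(E/F)\otimes\Omega_X$ induced by $\nabla$ vanishes, so $F$ is horizontal for the canonical connection of $E=F^*E_1$. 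By Cartier descent $F=F^*F_1$ with $F_1\subset E_1$; iterating the argument on each stratified $(E_n,\nabla_n)$ produces a tower $F_n\subset E_n$ with $\mu(F_n)=\mu(F)/p^n>0$. But ${\rm rk}(F_n)={\rm rk}(F)\le r$ and $c_1(F_n)\cdot H^{d-1}\in\ZZ$, so every positive slope of a subsheaf is at least $1/r$, contradicting $\mu(F_n)\to 0$. Applying the same argument to the stratified bundle $(F^m)^*E$ gives semistability of every Frobenius pull-back of $E$, so $E$ is strongly semistable, and (i) follows from Proposition \ref{num-nef}.

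For (ii), set $H=E_1^*\otimes E_2$, which is stratified and by (i) has numerically flat underlying bundle; moreover each $H_n$ in its descent tower is numerically flat, since Frobenius pull-back reflects nefness. The space $\Hom_{{\cal D}_X}((E_1,\nabla_1),(E_2,\nabla_2))$ consists of compatible systems $(s_n)$ with $s_n\in H^0(X,H_n)$ and $s_n=F^*s_{n+1}$, while $\Hom_{\O_X}(E_1,E_2)=H^0(X,H_0)$. The pullback $F^*\colon H^0(X,H_{n+1})\to H^0(X,H_n)$ is injective with image equal to the horizontal sections of $H_n$ for the canonical connection of $H_n=F^*H_{n+1}$; but any $s\in H^0(X,H_n)$ is automatically horizontal, since $\nabla s\in H^0(X,H_n\otimes\Omega_X)$ and $\mu_{\max}(H_n\otimes\Omega_X)\le\mu_{\max}(\Omega_X)<0$ forces $H^0(X,H_n\otimes\Omega_X)=0$. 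Hence every transition is an isomorphism, and $\Hom_{{\cal D}_X}(E_1,E_2)=H^0(X,H)=\Hom_{\O_X}(E_1,E_2)$.

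For (iii), let $F\subset E$ be a subobject in $\Vect(X)$. Then $F$ and $E/F$ are numerically flat, so $\mu_{\min}(F)=0>\mu_{\max}(\Omega_X)\ge\mu_{\max}((E/F)\otimes\Omega_X)$, and the second fundamental form argument from (i) forces $\nabla(F)\subset F\otimes\Omega_X$; Cartier descent then produces $F_1\subset E_1$, and iterating yields a compatible tower $F_n\subset E_n$ that equips $F$ with a sub-${\cal D}_X$-module structure inside $(E,\nabla)$. The main obstacle is checking that numerical flatness and the stratified structure descend to every level of the Frobenius tower, so that the slope inequality and Cartier descent can be invoked at each step; with this in hand, both (i) and (iii) follow from the single inequality $\mu_{\max}(\Omega_X)<0$.
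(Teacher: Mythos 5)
Your proposal is correct and follows essentially the same route as the paper: the forgetful functor $(E,\nabla)\mapsto E$, the Deligne--Milne criterion for faithful flatness (which is part (a), not (b), of \cite[Proposition 2.21]{DM} in the paper's usage), the vanishing of $\Hom$'s into twists by $\Omega_X$ forced by $\mu_{\max}(\Omega_X)<0$, and Cartier descent to lift subobjects level by level. The only real divergence is in step (i), where you establish semistability of the underlying bundle by showing the maximal destabilizing subsheaf is horizontal and descends indefinitely, while the paper gets it from $E_n$ being semistable for $n\gg 0$ together with Mehta--Ramanathan's theorem that semistability implies strong semistability when $\mu_{\max}(\Omega_X)<0$ --- a fact you also tacitly use when writing $\mu_{\max}(A\otimes B)\le\mu_{\max}(A)+\mu_{\max}(B)$ in characteristic $p$.
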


\begin{proof}
We will need the following lemma:
\begin{Lemma}
If $\mu _{\max} (\Omega_X)<0$ then for any semistable locally free
sheaf $E$ of degree zero the canonical map $H^0(X,E)\to H^0(X,
F^*E)$ is an isomorphism.
\end{Lemma}

\begin{proof} To prove the lemma
we use the exact sequence
$$0\to \O_X\to F_*\O_X\to F_*\Omega_X.$$
Tensoring it with $E$ and taking sections we get
$$0\to H^0(X,E)\to H^0(X,F_*(F^*E))\to H^0(X, F_*( F^*E\otimes \Omega_X)).$$
Note that
$$H^0(X, F_*(F^*E\otimes \Omega_X))=H^0(X,
F^*E\otimes \Omega_X)=\Hom (F^*(E^*), \Omega_X ).$$ Now let us
recall that if $\mu _{\max} (\Omega_X)<0$ then a semistable sheaf
is strongly semistable (this fact is due to Mehta and Ramanathan;
see \cite[Theorem 2.9]{La3}). In particular, $F^*(E^*)$ is
semistable of degree larger than $\mu _{\max} (\Omega_X)$.
Therefore there are no nontrivial $\O_X$-homomorphisms between
$F^*(E^*)$ and $ \Omega_X$. Then the assertion follows from
equality $H^0(X,F_*(F^*E))=H^0(X,F^*E)$.
\end{proof}

Now we can begin the proof of the proposition. Let us recall that
a \emph{flat bundle} $\{ E_i, \sigma _i\}$ (which is equivalent to
an $\O_X$-coherent ${\cal D}_X$-module) is a sequence of locally
free sheaves $E_i$ and $\O_X$-isomorphisms $\sigma_i:
F^*E_{i+1}\to E_i$. Since $E_i$ is semistable  for large $i$, by
the above lemma $E_0$ is also semistable. Let us define the
functor between the neutral Tannaka category of flat bundles and
numerically flat bundles by sending $\{ E_i, \sigma _i\}$ to
$E_0$. Let $\{ E_i, \sigma _i\}$ and $\{ E_i', \sigma _i'\}$ be
flat bundles. Then by the above lemma applied to the sheaf $\cHom
(E_{i+1},E_{i+1}')$ we get a canonical isomorphism
$$\Hom (E_{i+1}, E_{i+1}')\simeq \Hom (E_i, E_i')$$
for every $i\ge 0$. This shows that
$$\Hom (\{ E_i, \sigma _i\} ,\{ E_i', \sigma _i'\})=\Hom (E_0 ,E_0').$$
Therefore by \cite[Proposition 2.21 (a)]{DM} to finish the proof
it is sufficient to show that if $E'$ is a numerically flat
subbundle of a bundle $E_0$ coming from the flat bundle  $\{ E_i,
\sigma _i\}$ then there exists the flat subbundle  $\{ E_i',
\sigma _i'\}$ with $E_0'\simeq E'$. Let us recall that $E_0$ has a
canonical connection $\nabla _{can} :E_0\to E_0\otimes\Omega_X$.
Since $\Hom _{\O_X}(E',  (E_0/E')\otimes \Omega_X)=0$, as follows
from our assumption, the sheaf $E'$ is preserved by the above
connection. Hence by Cartier's theorem $E'\subset F^*E_1$ descends
under the Frobenius morphism. This way we constructed $E_1'$ and
we can proceed by induction to construct the required flat bundle.
\end{proof}

\medskip

In \cite[10.25 and Proposition 10.32]{De} and \cite[Definition
3.1.3]{Sh} Deligne and Shiho introduced a pro-unipotent completion
of the fundamental group (Shiho called this group the de Rham
fundamental group scheme but it takes care only of the unipotent
part of such a hypothetical de Rham fundamental group). Let us
call this group $\pi_1^U(X,x)$. In our case, it is defined as
Tannaka dual to the neutral Tannaka category $\cal D$ consisting
of such sheaves $E$ with an integrable connection $\nabla : E\to
E\otimes \Omega_X$, which have a filtration
$$0=E_0\subset (E_1, \nabla _1)\subset \dots \subset (E_n, \nabla _n)=(E,\nabla)$$
such that we have short exact sequences
$$0\to (E_{i-1}, \nabla_{i-1})\to (E_i, \nabla_i)\to (\O_X, d)\to 0.$$
Let us note that usually the connection is not uniquely determined
by the sheaf. For example, for any closed  $1$-form $\gamma$  the
pair $(\O_X ,d+\gamma)$ is an object of $\cal D$. Also, not every
numerically flat bundle has a filtration with quotients isomorphic
to $\O_X$ (for example, no strongly stable numerically flat bundle
of rank $r\ge 2$ has such a filtration). So, in general, we cannot
expect any natural homomorphism between $\pi_1^U(X,x)$ and $\pi
_1^S(X,x)$. However, as before,  if $\mu _{\max} (\Omega_X)<0$
then  we can obtain $\pi _1^U(X,x)$ from  the S-fundamental group
scheme (as a pro-unipotent completion, although we will not try to
prove it as it is likely to be a trivial statement; see the remark
at the end of the section).

\begin{Proposition}
Let $X$ be a smooth projective $k$-variety. If $\mu _{\max}
(\Omega_X)<0$ then there exist a natural faithfully flat
homomorphism $\pi^S_1(X,x)\to \pi^U_1(X,x)$.
\end{Proposition}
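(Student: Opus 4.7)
The plan is to apply \cite[Proposition 2.21(a)]{DM} to the forgetful tensor functor $F:{\cal D}\to \Vect (X)$ sending $(E,\nabla)\mapsto E$. This requires three verifications: (i) $F$ lands in $\Vect (X)$; (ii) $F$ is fully faithful; (iii) every subbundle of $F(E,\nabla)$ in $\Vect (X)$ lifts to a subobject of $(E,\nabla)$ in ${\cal D}$. For (i), I induct on the length of the iterated extension: $\O_X$ is numerically flat, and a short exact sequence $0\to E_{n-1}\to E_n\to \O_X\to 0$ of locally free sheaves dualizes to the same kind of sequence, so both $E_n$ and $E_n^*$ are extensions of nef bundles and are therefore nef.

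For full faithfulness, given $\varphi:E\to E'$ between underlying sheaves of two objects of ${\cal D}$, the obstruction $\nabla'\varphi-(\varphi\otimes \id)\nabla$ to compatibility with the connections is $\O_X$-linear, so it suffices to prove $\Hom_{\O_X}(E,E'\otimes \Omega_X)=0$. By adjunction, a nonzero element corresponds to a nonzero sheaf map $E\otimes (E')^*\to \Omega_X$. Its source belongs to $\Vect (X)$ (which is closed under tensor products), so it is strongly semistable of slope $0$, and hence any nonzero quotient has slope $\ge 0$; but the image of the map is a torsion-free subsheaf of $\Omega_X$, so its slope is $\le \mu_{\max}(\Omega_X)<0$, a contradiction. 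The same Hom-vanishing applied to $(E',E/E')$ shows that any subbundle $E'\subset E$ in $\Vect (X)$ is automatically preserved by $\nabla$, producing an induced integrable connection $\nabla'$.

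It remains to verify that $(E',\nabla')\in {\cal D}$, which I do by induction on the length $n$ of the filtration of $(E,\nabla)$. For $n=1$, a nonzero object $E'$ of $\Vect (X)$ inside $\O_X$ is a rank-one numerically flat ideal sheaf, hence of the form $\O_X(-D)$ with $D$ effective and numerically equivalent to $0$; so $D=0$ and $E'=\O_X$, while the induced connection must be $d$ since the space of integrable connections on $\O_X$ is a torsor under $H^0(X,\Omega_X)$, which vanishes because a nonzero section would contradict $\mu_{\max}(\Omega_X)<0$. For the inductive step using $0\to (E_{n-1},\nabla_{n-1})\to (E,\nabla)\to (\O_X,d)\to 0$, the image of the composition $E'\to E\to \O_X$ is simultaneously a quotient of $E'$ in the abelian category $\Vect (X)$ and a subsheaf of $\O_X$, so by the $n=1$ case it is either $0$ or $\O_X$. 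In the first case, $E'\subset E_{n-1}$ and induction applies directly; in the second case, $E'\cap E_{n-1}$ belongs to $\Vect (X)$ and by induction carries the required filtration inside $(E_{n-1},\nabla_{n-1})$, which we extend on top by the rank-one quotient $(E'/(E'\cap E_{n-1}),\text{induced})$, identified with $(\O_X,d)$ by the same uniqueness argument.

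The main delicate step is precisely pinning down the induced connection on each rank-one quotient $\O_X$ as the trivial connection $d$: this is where the hypothesis $\mu_{\max}(\Omega_X)<0$ enters in a way that is stronger than the Hom-vanishing used for fullness and for restricting $\nabla$. Once this rigidification is available, the induction produces the desired ${\cal D}$-filtration of $(E',\nabla')$, and \cite[Proposition 2.21(a)]{DM} yields the faithfully flat homomorphism $\pi^S_1(X,x)\to \pi^U_1(X,x)$.
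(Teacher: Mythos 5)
Your proposal is correct and follows essentially the same route as the paper: the forgetful functor ${\cal D}\to\Vect (X)$, full faithfulness and preservation of numerically flat subbundles by the connection via the vanishing of $\Hom_{\O_X}(E,E'\otimes\Omega_X)$ forced by semistability and $\mu_{\max}(\Omega_X)<0$, and then \cite[Proposition 2.21 (a)]{DM}. The only difference is that you verify in detail, by induction on the length of the filtration, that the induced subobject again lies in ${\cal D}$ with quotients $(\O_X,d)$ --- a point the paper dismisses as automatic --- and your argument for this (rigidity of connections on $\O_X$ via $H^0(X,\Omega_X)=0$, plus the observation that a numerically flat invertible ideal sheaf is $\O_X$) is sound.
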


\begin{proof}
Let us construct a functor $\Phi$ from $\cal D$ to the Tannaka
category of numerically flat bundles by associating to an object
$(E,\nabla)$ of $\cal D$ the sheaf $E$. Clearly, $E$ is
numerically flat so this makes sense. Let $(E_1,\nabla _1 )$ and
$(E_2,\nabla _2)$ be objects of $\cal D$. Let us take an
$\O_X$-homomorphism $\varphi: E_1\to E_2$ and consider the diagram
$$ \xymatrix{ & E_1\ar[r]^-{\nabla_1} \ar[d]^{\varphi} &E_1 \otimes \Omega_X
\ar[d]^{{\varphi} \otimes {\id}_{\Omega_X}}\\
&E_2\ar[r]^-{\nabla_2} &E_2 \otimes \Omega_X\\ }$$
Then $({\varphi} \otimes {\id}_{\Omega_X}) \circ
\nabla_1-\nabla_2\circ \varphi \in \Hom _{\O_X}(E_1, E_2\otimes
\Omega_X).$ But $E_1,E_2$ are strongly semistable and
$\mu_{\max}(\Omega _X)<0$, so $\Hom _{\O_X}(E_1, E_2\otimes
\Omega_X)=0$. Therefore  the above diagram is commutative which
shows that the functor $\Phi$ is fully faithful.

By \cite[Proposition 2.21 (a)]{DM} to finish the proof we need to
show that if $E'$ is a numerically flat subbundle of a bundle $E$
coming from  $(E, \nabla)$ then $\nabla $ induces an integrable
connection on $E'$. Then, automatically,  $E'$ has a filtration as
in the definition of $\cal D$, so it is a subobject of
$(E,\nabla)$. Note that if $\nabla$ does not preserve $E'$ then it
induces a non-trivial $\O_X$-homomorphism $E'\to (E/E')\otimes \Omega_X$.
Again one can easily see that there are no such
homomorphisms, which proves the required assertion.
\end{proof}

Finally let us formulate the following easy lemma whose proof is left to the reader:

\begin{Lemma}
Let $X$ be a smooth projective $k$-variety. If $\mu _{\max}
(\Omega_X)<0$ then every semistable locally free sheaf $E$ of
degree zero admits at most one connection. If $E$ admits a
connection $\nabla$ then it is integrable (i.e., $\nabla ^2=0$)
and its $p$-curvature vanishes. In particular, there exists $E'$
such that $(E,\nabla) \simeq (F^*E', \nabla _{can})$.
\end{Lemma}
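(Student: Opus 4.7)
The plan is to reduce every claim to vanishing of $H^0(X,\End(E)\otimes \G)$ for appropriate sheaves $\G$, exploiting that $E$ is strongly semistable of slope zero by Mehta--Ramanathan (\cite[Theorem 2.9]{La3}, invoked already in the previous proposition), and hence that $\End(E)=E^*\otimes E$ is strongly semistable of slope zero, using that tensor products of strongly semistable sheaves are strongly semistable in positive characteristic.

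The three assertions reduce as follows. Two connections on $E$ differ by an $\O_X$-linear map $E\to E\otimes \Omega_X$, that is, a global section of $\End(E)\otimes \Omega_X$; the curvature $\nabla^2$ is a section of $\End(E)\otimes \Omega_X^2$; and the $p$-curvature, being $p$-linear in $T_X$, factors through $F^*T_X$ and so is a section of $\End(E)\otimes F^*\Omega_X$. If these three $H^0$'s vanish, then uniqueness, integrability, and vanishing of the $p$-curvature follow, and Cartier's theorem produces the desired $E'$ with $(E,\nabla)\simeq (F^*E',\nabla_{\rm can})$.

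To see each $H^0$ vanishes, I bound $\mu_{\max}$ of the relevant twist away from zero. Since $\mu_{\max}(\Omega_X)<0$, every semistable sheaf on $X$ is strongly semistable by \cite[Theorem 2.9]{La3}; in particular each Harder--Narasimhan subquotient of $\Omega_X$ is strongly semistable with negative slope. Tensoring the HN filtration of $\Omega_X$ (resp.\ of $\Omega_X\otimes\Omega_X$, which surjects onto $\Omega_X^2$) by the strongly semistable $\End(E)$ yields a filtration of $\End(E)\otimes \Omega_X$ (resp.\ $\End(E)\otimes \Omega_X^2$) by strongly semistable subquotients of negative slope, so both $\mu_{\max}$'s are negative. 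For the $p$-curvature term, the Frobenius pullback of the HN filtration of $\Omega_X$ remains a filtration by strongly semistable sheaves whose slopes are just scaled by $p$; since the order is preserved, this is the HN filtration of $F^*\Omega_X$, giving $\mu_{\max}(F^*\Omega_X)=p\,\mu_{\max}(\Omega_X)<0$, and the same tensoring argument yields $\mu_{\max}(\End(E)\otimes F^*\Omega_X)<0$.

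The only delicate point is the third vanishing: a priori Frobenius pullback can increase the maximum slope of an arbitrary sheaf, but precisely the hypothesis $\mu_{\max}(\Omega_X)<0$ turns semistability into strong semistability for the HN quotients of $\Omega_X$, so Frobenius pullback just multiplies slopes by $p$ and the negative bound persists. Given the three vanishings, Cartier descent finishes the proof, producing $E'$ with $F^*E'\simeq E$ and $\nabla=\nabla_{\rm can}$.
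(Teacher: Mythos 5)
Your proof is correct, and it fills in exactly the argument the paper intends: the author states this lemma as "easy" and leaves the proof to the reader, but the immediately preceding propositions use the very same vanishings $\Hom(E_1,E_2\otimes\Omega_X)=0$ for strongly semistable slope-zero bundles under $\mu_{\max}(\Omega_X)<0$, together with the Mehta--Ramanathan fact that semistability implies strong semistability in this situation and Cartier descent. Your extra care on the $p$-curvature term --- noting that $\mu_{\max}(F^*\Omega_X)=p\,\mu_{\max}(\Omega_X)<0$ precisely because the Harder--Narasimhan quotients of $\Omega_X$ are strongly semistable under the hypothesis --- is the one genuinely non-automatic point, and you handle it correctly.
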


Let us note that if $h^1(X, \O _X)\ne 0$ then $\pi^S_1(X,x)$ is
non-trivial because $\Pic ^0(X)\ne 0$. Nevertheless, the author
does not know any examples of varieties in positive characteristic
with $\mu _{\max} (\Omega_X)<0$ and a non-trivial S-fundamental
group scheme. One can show that there is no such example in
dimension $\le 2$.

\medskip

\section{Monodromy groups}

In this section we recall a few results, mostly from \cite{BPS},
generalizing them to higher dimension. Since the proofs, using our
definitions, are either the same as in \cite{BPS} or simpler we
usually skip them.

Let $G$ be a connected reductive $k$-group and let $E_G\to X$ be a
principal $G$-bundle on $X$.

\begin{Definition} \textup{(\cite[Definition 2.2]{BS})}
$E_G$ is called \emph{numerically flat} if for every parabolic
subgroup $P\subset G$ and every character $\chi: P \to {\mathbb
G}_m $ dominant with respect to some Borel subgroup of $G$
contained in $P$, the dual line bundle $L(\chi)^*$ over $E_G/P$ is
nef.
\end{Definition}

If $X$ is a smooth projective curve then $E_G$ is numerically flat
if and only if it is a strongly semistable principal $G$-bundle of
degree zero. Note that if $G$ is semisimple then a principal
$G$-bundle has always degree zero.

\begin{Lemma}
The following conditions are equivalent:
\begin{enumerate}
\item $E_G$ is numerically flat,
\item for every representation $G\to \GL (V)$  the associated
vector bundle $E_G(V)$ is numerically flat,
\item $E_G(\fg)$, associated to $E_G$ via the adjoint representation, is numerically
flat.
\end{enumerate}
\end{Lemma}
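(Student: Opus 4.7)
The plan is to prove $(2) \Rightarrow (3)$ by instantiating $(2)$ with the adjoint representation $V = \fg$, and to reduce both $(1) \Rightarrow (2)$ and $(3) \Rightarrow (1)$ to the case where $X$ is a smooth projective curve. Indeed, numerical flatness of the vector bundles $E_G(V)$ and $E_G(\fg)$ may be tested on smooth curves mapping to $X$ by Corollary \ref{strong-bogomolov}, and numerical flatness of $E_G$ itself is defined by the nefness of the line bundles $L(\chi)^*$ on the flag bundles $E_G/P$, a property preserved under pullback along any $f\colon C \to X$ and detectable by restriction to curves inside $E_G/P$. So after such a reduction it suffices to treat the case $X = C$, a smooth projective curve.

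For $(1) \Rightarrow (2)$ on $C$: the curve case discussion of \cite{BPS} identifies numerical flatness of $E_G$ on $C$ with $E_G$ being a strongly semistable principal $G$-bundle of degree zero in Ramanathan's sense. The theorem of Ilangovan--Mehta--Parameswaran, generalizing Ramanan--Ramanathan to positive characteristic for reductive $G$, then shows that for every representation $G \to \GL(V)$ the associated vector bundle $E_G(V)$ is strongly semistable of degree zero, hence numerically flat by Proposition \ref{num-nef}.

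For $(3) \Rightarrow (1)$ on $C$: suppose $E_G(\fg)$ is numerically flat but $E_G$ is not. Since numerical flatness of $E_G(\fg)$ is preserved under arbitrary Frobenius pullbacks (by Proposition \ref{num-nef}, applied to each $(F^n)^*E_G(\fg)$), we may replace $E_G$ by a sufficiently deep Frobenius pullback and assume that $E_G$ itself fails to be semistable on $C$. The canonical instability reduction then yields a proper parabolic $P \subsetneq G$ and a reduction of structure group $E_P \subset E_G$ such that, writing $\fu$ for the nilpotent radical of $\fp$, the subbundle $E_P(\fu) \subset E_P(\fg) = E_G(\fg)$ has strictly positive degree. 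This contradicts semistability of the degree-zero bundle $E_G(\fg)$, so $E_G$ must have been numerically flat in the first place.

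The main obstacle is this last implication, where one must relate semistability of the adjoint bundle to that of the principal bundle uniformly in positive characteristic; Ramanan--Ramanathan-type results carry hypotheses on the characteristic, and the naive argument via the instability parabolic of $E_G(\fg)$ can fail for small $p$. The key trick is that numerical flatness of $E_G(\fg)$ is inherited by every Frobenius pullback, giving enough room to pass to a sufficiently deep pullback where the canonical reduction and the positivity of $\deg E_P(\fu)$ can be extracted without any restriction on $p$.
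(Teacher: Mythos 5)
Your overall route is the one the paper takes: reduce everything to the case of a smooth projective curve, get $(1)\Rightarrow(2)$ from the Ramanan--Ramanathan type theorem on associated bundles of strongly semistable principal bundles (with the degree handled by numerical flatness of $E_G$), observe that $(2)\Rightarrow(3)$ is the special case of the adjoint representation, and close the cycle by relating strong semistability of $E_G$ to that of $E_G(\fg)$. The paper disposes of this last step by citing \cite[Corollary 2.8]{La3}, whereas you sketch a direct proof via the canonical parabolic reduction; as far as semistability is concerned that sketch is sound --- the positivity of the degree of the subbundle associated to the Lie algebra of the canonical parabolic, combined with $\deg E_G(\fg)=0$ and the Frobenius pullback trick, does give the contradiction.

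The genuine gap is in $(3)\Rightarrow(1)$: numerical flatness of $E_G$ on a curve means strong semistability \emph{plus} degree zero (i.e., $\deg L(\chi)=0$ for every character $\chi$ of $G$), and your argument only addresses the first half. The step from ``$E_G$ is not numerically flat'' to ``some Frobenius pullback of $E_G$ is not semistable'' fails for non-semisimple reductive $G$: for $G={\mathbb G}_m$ and $E_G$ a line bundle of degree $1$ on $C$, every Frobenius pullback is semistable and $E_G(\fg)=\O_C$ is numerically flat, yet $E_G$ is not. Since the adjoint representation kills the center, condition $(3)$ cannot detect the characters of $G$, so the degree-zero half must be argued separately (or one must assume $G$ semisimple, in which case it is automatic, as the paper notes). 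To be fair, the paper buries exactly this point in its citation and only flags the analogous wrinkle in the direction $(1)\Rightarrow(2)$. A smaller issue: the result you want for $(1)\Rightarrow(2)$ is \cite[Theorem 3.23]{RR} itself, which is already the positive-characteristic statement for \emph{strongly} semistable bundles; the Ilangovan--Mehta--Parameswaran theorem is the low-height refinement for plain semistability and is not the relevant tool here.
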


\begin{proof}
It is sufficient to prove the lemma when $X$ is a smooth
projective curve. Then 1 implies 2 because of  \cite[Theorem
3.23]{RR}. This needs a small additional argument if $G$ is not
semisimple as the radical of $G$ is not necessarily mapped to the
centre of $\GL (V)$ (the only problem is with degree of associated
bundles but this is zero as $E_G$ is numerically flat). Obviously,
2 implies 3 and 3 is equivalent to 1 by \cite[Corollary 2.8]{La3}.
\end{proof}

\medskip
Let $E_G$ be a numerically flat principal $G$-bundle. Let $\E _G
:G-\mod \to \Vect (X)$ be the functor corresponding to  $E_G$
(see, e.g., \cite[Lemma 2.3 and Proposition 2.4]{No}). Let us set
$T_G=T_x\circ\E_G$. Then $(G-\mod, \otimes, T_G, k)$ is a neutral
Tannakian category. The affine group scheme corresponding to this
category is $\Ad (E_G) _x\simeq G$. Therefore the functor
$(G-\mod, \otimes, T_G, k)\to (\Vect (X), \otimes ,T_x, \O_X )$
defines a homomorphism of groups
$$\rho (E_G) : \pi_1^S(X,x)\to \Ad (E_G)_x.$$
The image $M$ of this homomorphism is called the \emph{monodromy
group scheme} of $E_G$. One can see that $E_G$ has a reduction of
structure group to $M$ and it is the smallest such subgroup scheme
(cf. \cite[Proposition 4.9]{BPS}).

Let us recall that a subgroup of a group is called
\emph{irreducible} if it is not contained in any proper parabolic
subgroup. By \cite[Lemma 4.13]{BPS} $E_G$ is strongly stable if
and only if the reduced monodromy group $M_{red}$ is an
irreducible subgroup of $\Ad (E_G) _x\simeq G$. It is well known
that irreducible subgroups of reductive groups are reductive, so
if $E_G$ is strongly stable then by \cite[Lemma 4.12]{BPS} for
large $m$ the monodromy group of $(F^m)^*E_G$ is a reductive group
(this is analogous to the complex case; see \cite[Proposition
8.1]{BPS}).

It follows that if $E_G$ is numerically flat then for large $m$
there exists a reduction $E_P'$ of $(F^m)^*E_G$ to a parabolic
subgroup $P\subset G$ such that the monodromy group of the
extension $E_L$ of $E_P$ to the Levi quotient $q: P\to L=P/R_u(P)$
is reduced and it is an irreducible subgroup of $L$. In fact, the
monodromy group $M$ of $E_G$ is a reduced subgroup of $P$ and
$q(M)$ is the monodromy group of $E_L$.

\section{Basic properties of the S-fundamental group scheme}

In this section we prove a few basic properties of the
S-fundamental group scheme: behavior under morphisms and field
extension (in arbitrary characteristic).

\medskip

Let $f: X\to Y$ be a $k$-morphism of complete $k$-varieties. Since
pull-backs of nef bundles are nef for a $k$-point $x\in X$ there
exists an induced map $\pi _1^S(X,x)\to \pi _1^S(Y,y),$ where
$y=f(x)$.

\medskip

\begin{Lemma} \label{morphism}
Let $f: X\to Y$ be a surjective flat morphism of complete
$k$-varieties. If $f_*\O_X=\O_Y$ then $\pi _1^S(X,x)\to \pi
_1^S(Y,y)$ is a faithfully flat surjection.
\end{Lemma}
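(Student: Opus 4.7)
The plan is to apply the Tannakian criterion \cite[Proposition 2.21 (a)]{DM}: the map $\pi_1^S(X,x)\to \pi_1^S(Y,y)$ is faithfully flat if and only if the pullback functor $f^*:\Vect(Y)\to \Vect(X)$ is fully faithful, and every subobject of $f^*E$ in $\Vect(X)$ is isomorphic to $f^*F'$ for some subobject $F'\subset E$ in $\Vect(Y)$.

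Full faithfulness follows immediately from the projection formula and $f_*\O_X=\O_Y$:
$$\Hom_X(f^*E_1,f^*E_2)=H^0(X,f^*(E_1^*\otimes E_2))=H^0(Y,(E_1^*\otimes E_2)\otimes f_*\O_X)=\Hom_Y(E_1,E_2).$$

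For the subobject condition, fix a subobject $F\subset f^*E$ in $\Vect(X)$ of rank $r$. Since $F$ and $f^*E/F$ are both locally free numerically flat, $F$ is a genuine subbundle and corresponds to a $Y$-morphism $g:X\to \mathop{\rm Gr}_Y(r,E)$ into the relative Grassmannian. The pullback of the Pl\"ucker line bundle is a tensor product of line bundles in $\Vect(X)$, and hence is numerically trivial (a nef line bundle whose dual is also nef must be numerically trivial). Flat base change combined with $f_*\O_X=\O_Y$ yields $H^0(X_y,\O_{X_y})=k(y)$ for every $y\in Y$, so every fibre $X_y$ is connected. If the restriction $g_y:X_y\to \mathop{\rm Gr}(r,E_y)$ had image of positive dimension, a curve $C$ in this image together with a curve $C'\subset X_y$ dominating $C$ would satisfy $g_y^*\O(1)\cdot C'>0$, contradicting numerical triviality. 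Hence $g_y$ is set-theoretically constant for every $y$.

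To promote this to a scheme-theoretic factorization of $g$ through $f$, let $Z\subset \mathop{\rm Gr}_Y(r,E)$ be the scheme-theoretic image of $g$, with structure projection $\pi:Z\to Y$. Then $\pi$ is proper, set-theoretically bijective (by the previous paragraph together with surjectivity of $f$), hence quasi-finite, hence finite, hence affine. The composition $\O_Y\to \pi_*\O_Z\hookrightarrow f_*\O_X=\O_Y$ is the identity, so $\pi_*\O_Z=\O_Y$ and $Z=\Spec_Y\pi_*\O_Z=Y$. The inverse of $\pi$ is a section $s:Y\to \mathop{\rm Gr}_Y(r,E)$ with $g=s\circ f$, corresponding to a subbundle $F'\subset E$ with $f^*F'\cong F$. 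Since nefness descends along surjective morphisms (Subsection 1.2), both $F'$ and $E/F'$ lie in $\Vect(Y)$, completing the verification. The main obstacle is this rigidity step, in which $f_*\O_X=\O_Y$ has to combine with properness to force $\pi$ to be an isomorphism.
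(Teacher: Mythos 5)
Your proof is correct, and for the key step (the subobject condition in the criterion of \cite[Proposition 2.21 (a)]{DM}) it takes a genuinely different route from the paper. The paper argues directly with pushforwards: it restricts the subbundle $E'\subset f^*E$ and its quotient to a fibre $X_y$, observes that a numerically flat subbundle of a trivial bundle on a complete fibre is itself trivial (its dual is globally generated and sections cannot vanish), so $h^0(X_y,E'_y)$ is constant; Grauert's theorem then makes $f_*E'$ locally free, and comparing ranks in the surjection $f^*f_*E''\to E''$ for the quotient $E''$ shows that $f^*f_*E'\to E'$ is an isomorphism, with $f_*E'$ numerically flat because its pullback is. You instead encode the subbundle as a $Y$-morphism to the relative Grassmannian, use numerical triviality of the pulled-back Pl\"ucker bundle to contract the (connected) fibres of $f$, and then use $f_*\O_X=\O_Y$ together with properness to force the scheme-theoretic image to be a section; the descended subbundle is the pullback of the universal subbundle along that section. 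Both arguments hinge on the same geometric input --- numerical flatness kills the subbundle's variation along the fibres --- but the paper's version is the more elementary one (Grauert plus base change, and it names the descent explicitly as $f_*E'$), while your rigidity argument is more geometric and adapts more readily to, say, reductions of structure group of principal bundles. Two small points of hygiene: connectedness of the fibres is cleanest via the Stein factorization of $f$ (rather than invoking ``flat base change'' for $H^0(X_y,\O_{X_y})=k(y)$), and set-theoretic bijectivity of $\pi:Z\to Y$ should be checked at all points, not just closed ones --- though in fact finiteness of $\pi$ (which follows from finiteness of the closed fibres by semicontinuity of fibre dimension) together with $\pi_*\O_Z=\O_Y$ already suffices for your conclusion.
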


\begin{proof}
By \cite[Proposition 2.21 (a)]{DM} we need to show that
\begin{description}
\item{(a)} the functor $\Vect (Y,y)\to \Vect (X,x)$ is fully
faithful,
\item{(b)} if $E'\subset f^*E$ is a numerically flat subbundle
for $E\in \Vect(Y)$ then $E'$ is isomorphic to pull back of a
numerically flat subbundle of $E$.
\end{description}

(a) follows immediately from the projection formula:
$${\cHom }_Y(E',E'')\simeq f_*{\cHom }_X(f^*E',f^*E'')$$
by taking sections.

To prove (b) let us set $E''=(f^*E)/E'$ and
 denote by $r, r', r''$ the ranks of $E, E',E''$
respectively and let $X_y$ be the fibre over a $k$-point $y\in Y$.
Then $E'_y=E'_{X_y}$ is a numerically flat subbundle of the
trivial bundle $(f^*E)_{X_y}\simeq \O_{X_y}^r$. But $(E'_y)^*$ is
also globally generated. Since a section of such bundle has no
zeroes $E'_y$ is trivial. Similarly, $E''_y$ is trivial. In
particular, since $E'$ is $Y$-flat and $h^0(X_y,E'_y)=r'$ does not
depend on $y\in Y$ we see that $f_*E'$ is locally free of rank
$r'$ by Grauert's theorem. In the same way we prove that $f_*E''$
is locally free of rank $r''$. Since the surjective map $f^*E\to
E''$ factors through $f^*f_*E''\to E''$ we see that $f^*f_*E''\to
E''$ is a surjective map of rank $r''$ vector bundles and hence it
is an isomorphism. Therefore $f^*f_*E'\to E'$ is also an
isomorphism. Let us remind that if the pull back of a bundle is
nef then the bundle is nef. Therefore $f_*E'$ is numerically flat.
\end{proof}

\begin{Proposition} \label{P^n}
For any $k$-point $x$ of $\PP^n_k$ we have $\pi_1^S(\PP^n_k,x)=0$.
\end{Proposition}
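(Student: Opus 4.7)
The plan is to argue by induction on $n$ that every object of $\Vect(\PP^n_k)$ is a trivial bundle $\O^r$; by Tannakian duality this is equivalent to the statement $\pi_1^S(\PP^n_k,x)=0$. The case $n=1$ is immediate: by Grothendieck's splitting theorem every vector bundle on $\PP^1_k$ has the form $\bigoplus_i \O(a_i)$, and numerical flatness (nefness of both the bundle and its dual) forces each $a_i$ to be zero.

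For the inductive step, fix $n\ge 2$ and $E\in\Vect(\PP^n_k)$ of rank $r$. By the very strong restriction theorem (Corollary \ref{strong-bogomolov}), the restriction $E|_\ell$ lies in $\Vect(\ell)$ for every line $\ell\subset\PP^n_k$, and hence is trivial by the base case. The key construction is the blow-up $\pi:Y\to\PP^n_k$ at the point $x$: the projective space $\PP^{n-1}_k$ parametrizing lines through $x$ fits into a $\PP^1$-bundle $p:Y\to\PP^{n-1}_k$ whose fibers are the proper transforms of such lines, each mapped isomorphically to its line by $\pi$. Fiberwise triviality of $\pi^*E$, combined with cohomology and base change, should then give that $F:=p_*\pi^*E$ is locally free of rank $r$ on $\PP^{n-1}_k$ and that the canonical map $p^*F\to\pi^*E$ is an isomorphism.

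Since pullback preserves numerical flatness and $p$ is surjective, the criterion recalled in Subsection 1.2 forces $F$ itself to be numerically flat. Applying the inductive hypothesis at any $k$-point of $\PP^{n-1}_k$ gives $F\cong\O_{\PP^{n-1}_k}^r$, whence $\pi^*E\cong p^*F\cong\O_Y^r$. Finally, using $\pi_*\O_Y=\O_{\PP^n_k}$ (standard for blow-ups of smooth points) together with the projection formula, one recovers $E\cong\pi_*\pi^*E\cong\O_{\PP^n_k}^r$. The main technical point to verify carefully is the descent step $\pi^*E\simeq p^*F$ via cohomology and base change; once that is handled, the surjectivity criterion for numerical flatness and the projection formula make the rest of the argument formal.
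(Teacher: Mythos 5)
Your argument is correct, but it takes a genuinely different route from the paper. The paper handles $n=1$ as you do, then for $n=2$ runs a Riemann--Roch computation: for a stable bundle $E$ on $\PP^2$ one has $\Hom(E,E)=k$, $\ext^2(E,E)=0$, so $\chi(E,E)=r^2-\Delta(E)\le 1$, which together with $\Delta(E)=0$ forces $r=1$ and $E\simeq\O_{\PP^2}$; Theorem \ref{loc-free} and $H^1(\PP^2,\O)=0$ then dispose of extensions. For $n\ge 3$ the paper invokes Horrocks' criterion (a bundle on $\PP^n$ splits iff its restriction to some plane splits) combined with the restriction theorem. You instead induct on $n$ directly by projecting from the point $x$: restriction to lines through $x$ is trivial by the $n=1$ case and Corollary \ref{strong-bogomolov}, the blow-up $Y\to\PP^{n-1}$ is a $\PP^1$-bundle with fiberwise-trivial $\pi^*E$, cohomology and base change gives $\pi^*E\simeq p^*F$ with $F=p_*\pi^*E$ locally free, and the surjective-pullback criterion of Subsection 1.2 transports numerical flatness down to $F$. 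Your approach is more uniform and arguably more elementary --- it uses only the formal stability of numerical flatness under restriction, pullback and descent along surjections, and avoids the stability analysis, Theorem \ref{loc-free}, and Horrocks' criterion entirely; the descent step you flag as the technical crux is precisely the Ishimura-type statement (\cite[Theorem 1]{Is}) that the paper itself uses in the blow-up lemma of Section 8, so it is available and unproblematic here (Grauert applies since the fiberwise $h^0$ is constant and the base is integral, and a fiberwise isomorphism of bundles of equal rank is an isomorphism). What the paper's route buys in exchange is the sharper intermediate fact that the only stable numerically flat bundle on $\PP^2$ is $\O_{\PP^2}$, an argument that adapts to other surfaces, whereas your induction is tailored to projective space.
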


\begin{proof}
For $n=1$ the assertion is easy as every strongly semistable
vector bundle of degree $0$ is trivial.

Let $E$ be a stable vector bundle on $\PP^2$. Then by a standard
argument $\Hom (E,E)=k$, $\ext ^2(E,E)=hom (E,E(-3))=0$ and
$$\chi (E,E)=1-\ext^1(E,E)=r^2-\Delta (E)\le 1.$$
Therefore if $E$ has vanishing Chern classes then $r=1$ and
$E\simeq \O_{\PP^2}$. Since extensions of trivial bundles on
$\PP^2$ are trivial, by Theorem \ref{loc-free} every $E\in \Vect
(\PP ^2)$ is trivial.

It is well known that a vector bundle on $\PP^n$ splits if and
only if its restriction to some plane splits (see \cite[Chapter I,
Theorem 2.3.2]{OSS}; the proof given in \cite{OSS} works in
arbitrary characteristic). Therefore if $E\in \Vect (\PP ^n)$ then
by restriction theorem the restriction of $E$ to a plane belongs
to $\Vect (\PP ^2)$, hence it is trivial, which proves that
$\pi_1^S(\PP^n_k,x)=0$.
\end{proof}

\begin{Lemma}
Let $Y$ be a smooth complete $k$-variety and let $f:X\to Y$ be the
blow-up of $Y$ along a smooth subvariety $Z\subset Y$. Then $\pi
_1^S(X,x)\to \pi _1^S(Y,y)$ is an isomorphism.
\end{Lemma}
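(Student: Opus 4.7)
The strategy is to show that $f^{*}:\Vect(Y)\to \Vect(X)$ is an equivalence of neutral Tannakian categories (the fibre functors being identified via $f(x)=y$); by Tannaka duality the induced map on Tannaka duals is then an isomorphism. So I need to verify that $f^{*}$ is fully faithful and essentially surjective.

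Full faithfulness is immediate from the classical fact that for a blow-up of a smooth variety along a smooth centre one has $f_{*}\O_X=\O_Y$. The projection formula then gives, for $E_1,E_2\in\Vect(Y)$,
$$\Hom_X(f^{*}E_1,f^{*}E_2)=H^0(X,f^{*}\cHom(E_1,E_2))=H^0(Y,\cHom(E_1,E_2))=\Hom_Y(E_1,E_2).$$

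For essential surjectivity let $E'\in\Vect(X)$ of rank $r$. The closed fibres of $f$ are either reduced points (over $Y\setminus Z$, where $f$ is an isomorphism) or projective spaces $\PP^{c-1}$ with $c=\mathrm{codim}_Y Z$ (over closed points of $Z$). By Corollary \ref{strong-bogomolov} the restriction of $E'$ to any such fibre is numerically flat, and by Proposition \ref{P^n} it is therefore trivial of rank $r$. Fix $y\in Z$ and let $X_n=X\times_Y\Spec(\O_{Y,y}/m_y^{n+1})$, so that $X_0\cong\PP^{c-1}$. I would show by induction on $n$ that $E'|_{X_n}$ is free of rank $r$: the obstruction to lifting a trivialisation from $X_{n-1}$ to $X_n$ lies in $H^1(\PP^{c-1},\O_{\PP^{c-1}}(n)^{\oplus r})$ (since the conormal bundle of the exceptional divisor along a fibre is $\O_{\PP^{c-1}}(1)$), and this group vanishes. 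The theorem on formal functions then yields $(f_{*}E')^{\wedge}_{y}\cong\hat\O_{Y,y}^{\,r}$ (and $(R^i f_{*}E')^{\wedge}_{y}=0$ for $i\ge1$), so $f_{*}E'$ is locally free of rank $r$ on $Y$. The same fibrewise trivialisation shows that the canonical map $f^{*}f_{*}E'\to E'$ is an isomorphism on every fibre, hence (by Nakayama) an isomorphism of sheaves. Finally $f_{*}E'\in\Vect(Y)$, because its pullback $f^{*}f_{*}E'\cong E'$ is numerically flat and, by Subsection 1.2, numerical flatness descends along surjective morphisms.

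The main obstacle is the inductive formal-function step that produces both the local freeness of $f_{*}E'$ of rank $r$ and the isomorphism $f^{*}f_{*}E'\cong E'$; once this is in hand, everything else (including the descent of numerical flatness from $E'$ to $f_{*}E'$) is a formal consequence, and the codimension-one case is trivial as $f$ is already an isomorphism there.
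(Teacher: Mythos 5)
Your proof is correct and takes essentially the same route as the paper: restrict $E'$ to the fibres of $f$, use Corollary \ref{strong-bogomolov} and Proposition \ref{P^n} to see these restrictions are trivial, and then descend via $f_*$ to get $f_*E'$ locally free with $f^*f_*E'\simeq E'$, concluding by the Tannakian criterion. The only difference is that the paper cites Ishimura's descent theorem \cite{Is} for the step you re-derive by hand with the formal-functions/obstruction argument, and it phrases the conclusion as ``closed immersion plus faithfully flat'' rather than as an equivalence of categories.
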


\begin{proof}
Let $E\in \Vect (X)$. Then by Proposition \ref{P^n} restriction of
$E$ to each fibre of $f$ is trivial. Then by \cite[Theorem 1]{Is}
(which can be easily adapted to our setting) $f_*E$ is locally
free and $E\simeq f^*f_*E$.  By \cite[Proposition 2.21 (b)]{DM}
this shows that $\pi_1^S (X,x)\to \pi_1^S(Y,y)$ is a closed
immersion. Then the proof that it is faithfully flat is an easier
version of the proof of Lemma \ref{morphism}.
\end{proof}

The above lemma strongly suggests that the S-fundamental group
scheme is a birational invariant. This would follow from the above
lemma and a version of W{\l}odarczyk's result \cite{Wl} in
positive characteristic. \footnote{{\sl Added in proof:} Very recently, A. Hogadi and V. Mehta
proved birational invariance of the S-fundamental group scheme.}.

\medskip

The proof of the following lemma was motivated by the proof of
\cite[Proposition 3.1]{MS}.

\begin{Lemma}
Let $X$ be a complete variety defined over an algebraically closed
field $k$. Let  $k'$ be an algebraically closed extension of $k$.
Let $x'$ be the $k'$-point of $X_{k'}=X\times _k\Spec k'$
corresponding to a $k$-point of $x$ of $X$. Then
$\pi_1^S(X_{k'},x') \to \pi _1^S(X,x)\times _k\Spec k'$ is
faithfully flat (in particular, it is surjective).
\end{Lemma}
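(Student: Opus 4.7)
My plan is to verify the Tannakian criterion of \cite[Proposition 2.21(a)]{DM}: a morphism $f\colon H \to G'$ of affine $k'$-group schemes is faithfully flat if and only if the associated pullback functor $\omega^f$ is fully faithful and its essential image is closed under taking subobjects. Here $H = \pi_1^S(X_{k'}, x')$, $G' = \pi_1^S(X, x) \times_k \Spec k'$, and $\omega^f$ identifies with the pullback functor $\pi^*\colon \Vect(X) \otimes_k k' \to \Vect(X_{k'})$ along the projection $\pi\colon X_{k'} \to X$.

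Full faithfulness of $\pi^*$ is immediate from flat base change: for $E_1, E_2 \in \Vect(X)$, the sheaf $\cHom(E_1, E_2) \in \Vect(X)$ is coherent, $X/k$ is proper, and hence
\[
\Hom_{X_{k'}}(\pi^*E_1, \pi^*E_2) = H^0(X, \cHom(E_1, E_2)) \otimes_k k' = \Hom_X(E_1, E_2) \otimes_k k'.
\]

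For the subobject condition, given $E \in \Vect(X)$ and a subobject $F \hookrightarrow \pi^*E$ in $\Vect(X_{k'})$, I aim to produce $F_0 \in \Vect(X)$ with $\pi^*F_0 \simeq F$ (only isomorphism as objects, not equality of subsheaves, is required by the Tannakian criterion). I would implement this by spreading out: writing $k' = \varinjlim A$ as a filtered colimit of its finitely generated $k$-subalgebras (all integral domains since $k'$ is a field), the finitely presented inclusion $F \hookrightarrow \pi^*E$ descends to some $F_A \hookrightarrow E \otimes_k A$ on $X_A = X \times_k \Spec A$. After localizing $A$ further, I may assume that $F_A$ and the quotient $(E \otimes_k A)/F_A$ are $A$-flat with numerically flat geometric fibers over every closed point of $\Spec A$; this arrangement uses the boundedness Theorem \ref{refl-bound} combined with Proposition \ref{num-nef}, openness of slope-semistability in flat families, and constancy of Chern classes in such families. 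Since $A$ is of finite type over the algebraically closed field $k$, $\Spec A$ has a closed $k$-point $a_0$, and the fiber $F_0 := F_{A, a_0} \subseteq E$ is then a numerically flat subobject on $X$.

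The delicate step is arranging that $a_0$ may be chosen so that $\pi^*F_0 \simeq F$ as objects in $\Vect(X_{k'})$. This rests on a rigidity statement: in a flat family of numerically flat subsheaves of a fixed $E$, the isomorphism class of the underlying bundle is locally constant on connected components of the base. Combining Theorem \ref{refl-bound} (finite type of the relevant Quot scheme restricted to numerically flat quotients), semicontinuity of $\Hom$ between numerically flat sheaves from Proposition \ref{num-nef}, and the Jordan--H\"older structure from Theorem \ref{loc-free}, one can specialize inside the connected component of $\Spec A$ carrying the $k'$-point giving $F$ to a $k$-point $a_0$ where the fiber has the same isomorphism class. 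This rigidity is the main obstacle and is the analogue, for numerical flatness, of the argument in \cite[Proposition 3.1]{MS} for Nori's fundamental group scheme, with the boundedness of numerically flat families replacing the discreteness of essentially finite bundles used there.
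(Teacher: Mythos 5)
Your step for the subobject condition is not the condition that \cite[Proposition 2.21 (a)]{DM} actually requires, and as you state it, it is false. Writing $G=\pi_1^S(X,x)$, the criterion applied to $\pi_1^S(X_{k'},x')\to G_{k'}$ asks that a numerically flat subobject $F$ of $\omega^f(V')$ be isomorphic to the image of a $G_{k'}$-\emph{subrepresentation} of $V'$; since $\mathrm{Rep}_{k'}(G_{k'})$ contains many objects that are not of the form $V\otimes_k k'$, this is much weaker than demanding $F\simeq \pi^*F_0$ with $F_0\in\Vect(X)$. The stronger statement you aim for fails: let $X$ be a smooth projective curve with $g=h^1(X,\O_X)\ge 2$ and let $0\to\O_X\to\E\to H^1(X,\O_X)\otimes_k\O_X\to 0$ be the universal extension (class $\id\in\Hom(H^1(\O_X),H^1(\O_X))$). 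For a line $\ell\subset H^1(X,\O_X)\otimes_kk'$ not defined over $k$, the preimage $F_\ell\subset\pi^*\E$ of $\ell\otimes\O_{X_{k'}}$ is a subobject of $\pi^*\E$ in $\Vect(X_{k'})$ (its quotient is trivial, hence numerically flat), and it is the extension of $\O$ by $\O$ with class $[\ell]\in\PP\bigl(H^1(X_{k'},\O)\bigr)$. Since $h^0(F_\ell)=1$, this class is an invariant of the bundle up to scalar, so $F_\ell\not\simeq\pi^*F_0$ for any $F_0\in\Vect(X)$. This also shows that the ``rigidity'' you hope to extract by spreading out over $\Spec A$ and specializing to a $k$-point cannot hold: the isomorphism class of the subobject genuinely varies in the family precisely because $\Ext^1$ between numerically flat bundles is positive-dimensional, unlike the essentially finite situation of \cite[Proposition 3.1]{MS} that your argument is modelled on. (Consistently with this, the paper remarks immediately after the lemma that the map is usually \emph{not} a closed immersion, because stable numerically flat bundles on $X_{k'}$ need not be defined over $k$.)

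The paper's proof avoids any descent to $k$. It introduces the full subcategory $\cT\subset\Vect(X_{k'})$ of numerically flat bundles $E'$ admitting an embedding $E'\subset E\otimes_kk'$ with $E\in\Vect(X)$; this is closed under subobjects by construction, so \cite[Proposition 2.21 (a)]{DM} applies to the inclusion $\cT\subset\Vect(X_{k'})$ once $\cT$ is identified with $\mathrm{Rep}_{k'}(G_{k'})$. That identification is the real content: every finite-dimensional $G_{k'}$-module embeds into $(k[G]^{\oplus m})\otimes_kk'$ and hence into $W\otimes_kk'$ for a finite-dimensional $G$-submodule $W\subset k[G]^{\oplus m}$ (local finiteness of comodules, \cite[I 3.9, 3.10]{Ja}), and the base change of the S-universal covering then realizes it as a subbundle of the base change of the bundle attached to $W$. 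Your base-change computation $\Hom(\pi^*E_1,\pi^*E_2)=\Hom(E_1,E_2)\otimes_kk'$ is correct and is an ingredient, but on its own it only treats morphisms between honest pullbacks, not between arbitrary objects of $\mathrm{Rep}_{k'}(G_{k'})$.
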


\begin{proof}
Let us note that if $E$ on $X_k$ is numerically flat then
$E\otimes _kk'$ is also numerically flat. By definition it is
sufficient to check this in case of smooth projective curves. But
in case of curves this follows immediately from the fact that if
$E$ on $X_k$ is stable (semistable or strongly semistable) then
$E\otimes _kk'$ is also stable (semistable or strongly semistable,
respectively); see \cite[Corollary 1.3.8 and Corollary
1.5.11]{HL}.

Let $\cT$ be the Tannakian subcategory of $\C' =(\Vect (X_{k'}),
\otimes ,T_{x'}, \O_{X_{k'}})$ whose objects are numerically flat
vector bundles $E'$ on $X_{k'}$ such that there exists $E\in \Vect
(X_k)$ such that $E'\subset E\otimes _k k'$.

Let us set $G=\pi _1^S(X,x)$ and consider the category $\cT '$ of
finite dimensional  $k'$-representations of $G_{k'}=G \times
_k\Spec k'$. Let $G_{k'}\to \GL (V')$ be a $k'$-representation.
Then by \cite[I 3.9 and 3.10]{Ja} there exists an inclusion of
$G_{k'}$-modules $V'\subset k'[G_{k'}]^{\oplus m}=(k[G]^{\oplus
m})\otimes k'$. Therefore there exists a $k$-vector subspace
$W'\subset k[G]^{\oplus m}$ such that $V'\subset W'\otimes k'$.
But there exists a finite dimensional $G$-module $W \subset
k[G]^{\oplus m}$ containing $W'$. Let $\ti X^S_{k'}$ be  the base
change of the S-universal covering of $X$. Then the vector bundle
$E'$ associated to $V'$ via this principal $G'$-bundle is a vector
subbundle of the base change of the vector bundle $E$ associated
to $W$ via the S-universal covering of $X$.

This shows that we have a natural functor $\cT ' \to \cT$ of
neutral Tannakian categories. It is easy to see that this functor
is an equivalence of Tannakian categories. Then by
\cite[Proposition 2.21 (a)]{DM} $\cT \subset \C'$ defines the
faithfully flat homomorphism $\pi_1^S(X_{k'},x) \to
\pi_1^S(X,x)\times _k\Spec k'$.
\end{proof}

\medskip
As in \cite[Proposition 3.1]{MS} one can easily see that if
$\pi_1^S(X_{k'},x) \to \pi _1^S(X,x)\times _k\Spec k'$ is a closed
immersion then every stable strongly semistable vector bundle on
$X_{k'}$ must be defined over $k$. Since this is not true already
for stable $F$-trivial bundles  (see \cite{Pa} for an example when
$X$ is a smooth curve), the above homomorphism is usually not a
closed immersion.

\medskip

Let $X$ and $Y$ be complete $k$-varieties. Let us fix $k$-points
$x\in X$ and $y\in Y$. Then we have a natural homomorphism
$$ \pi_1^S(X\times Y, (x,y)) \to \pi_1^S(X,x)\times \pi_1^S(Y,y).$$
Using embeddings of $X\times \{y\}$ and $Y\times \{x\}$ into
$X\times Y$ and Lemma \ref{morphism} one can easily see that this
homomorphism is faithfully flat. Unfortunately, it is not clear if
this is an isomorphism \footnote{In ``On the S-fundamental group scheme II''
we prove that this map is an isomorphism.}.

Note that the question is non-trivial even at the level of
characters of S-fundamental groups. For example, it is true but a
non-trivial fact that
$${\Pic } ^0(X)\times {\Pic }^0(Y)\to {\Pic }^0(X\times Y)$$
is an isomorphism on the level of $k$-points (i.e., it is an
isomorphism of the corresponding reduced schemes). But this is not
yet sufficient to conclude that a line bundle on $X\times Y$ with
a (numerically) trivial first Chern class is of the form
$p_X^*L\otimes p_Y^*M$ for some line bundles $L$ on $X$ and $M$ on
$Y$. Here we should recall that a line bundle has vanishing first
Chern class if and only if certain tensor power of this line
bundle is algebraically equivalent to zero in $\Pic X$ (see, e.g.,
\cite[Example 19.3.3]{Fu}).

\section{Some vanishing theorems for $H^1$ and $H^2$}

In this section we prove a few basic vanishing theorems for the
cohomology groups of strongly semistable sheaves with vanishing
Chern classes.

\medskip

We assume that $X$ is a smooth $d$-dimensional projective variety
defined over an algebraically closed field $k$ and $H$ is an ample
divisor on $X$ (we consider slopes only with respect to this
divisor).

\medskip

If $E\in \Vect (X)$ then for any effective divisor $D$ we have
$H^0(X, E(-D))=0$, as $E(-D)$ is semistable with negative slope.
In this section we will find similar vanishing theorems for $H^1$
and $H^2$.

\begin{Theorem}\label{vanishing} \textup{(Vanishing theorem for $H^1$)}
Assume that $X$ has dimension $d\ge 2$. Let $E\in \Vect (X)$ and
let $D$ be any ample divisor. If $DH^{d-1}>\frac{\mu_{\max}(\Omega
_X)}{p}$ then $H^1(X, E(-D))=0$.
\end{Theorem}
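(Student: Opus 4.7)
The plan is to iterate Frobenius pullbacks so as to replace $H^1(X, E(-D))$ with $H^1(X, ((F^n)^*E)(-p^n D))$ for arbitrarily large $n$, and close by uniform Serre vanishing, in the spirit of Szpiro's techniques. The starting point is a Cartier-type short exact sequence. Since $X$ is smooth of characteristic $p>0$, the exterior derivative $d: F_*\O_X \to F_*\Omega_X$ is $\O_X$-linear (because $d(f^p g) = f^p dg$ in characteristic $p$) with kernel the image of $F^\#: \O_X \to F_*\O_X$, so one obtains
$$0 \to \O_X \to F_*\O_X \to Q \to 0$$
with $Q := F_*\O_X/\O_X$ embedding into $F_*\Omega_X$ via $d$. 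Tensoring by the (locally free) sheaf $E(-D)$ and applying the projection formula produces
$$0 \to E(-D) \to F_*((F^*E)(-pD)) \to E(-D) \otimes Q \to 0,$$
whose long exact cohomology sequence gives, using $H^i(X, F_*G) = H^i(X, G)$,
$$H^0(X, E(-D) \otimes Q) \to H^1(X, E(-D)) \to H^1(X, (F^*E)(-pD)).$$

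The crux is to kill the leftmost group. Using $Q \hookrightarrow F_*\Omega_X$ and the projection formula once more,
$$H^0(X, E(-D) \otimes Q) \hookrightarrow H^0(X, (F^*E)(-pD) \otimes \Omega_X) = \Hom((F^*E)^*(pD),\, \Omega_X).$$
By Proposition \ref{num-nef}, $(F^*E)^*$ is numerically flat, and therefore $H$-semistable of slope zero, so $(F^*E)^*(pD)$ is $H$-semistable of slope $pD \cdot H^{d-1}$, which is strictly greater than $\mu_{\max}(\Omega_X)$ by the hypothesis. Hence there are no nonzero maps $(F^*E)^*(pD) \to \Omega_X$, and the $\Hom$ group vanishes. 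This gives $H^1(X, E(-D)) \hookrightarrow H^1(X, (F^*E)(-pD))$. Since the slope hypothesis is preserved (in fact strengthened) after replacing $(E, D)$ by $((F^n)^*E, p^n D)$, iterating yields an injection $H^1(X, E(-D)) \hookrightarrow H^1(X, ((F^n)^*E)(-p^n D))$ for every $n \in \NN$.

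To finish, the family $\{(F^n)^*E\}_{n \geq 0}$ lies in $\Vect(X)$, which by Theorem \ref{refl-bound} is a subfamily of the bounded family $\cT_{X/k}(r, 0, 0; 0)$. Because $D$ is ample, uniform Serre vanishing for this bounded family with respect to $D$ yields $H^1(X, ((F^n)^*E)(-p^n D)) = 0$ once $p^n D$ is sufficiently positive, which holds for $n \gg 0$; combined with the iterated injection this forces $H^1(X, E(-D)) = 0$. The only real subtlety lies in the slope comparison of the second paragraph: the factor $1/p$ in the hypothesis is exactly what is needed to absorb the $p$-fold slope jump under Frobenius pullback that is built into the Cartier sequence; the rest of the argument is formal.
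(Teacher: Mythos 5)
Your proposal is correct and follows essentially the same route as the paper: the sequence $0\to\O_X\to F_*\O_X\to Q\to 0$ with $Q=F_*\O_X/\O_X\simeq F_*B^1_X\hookrightarrow F_*\Omega_X$ is exactly the paper's Lemma \ref{van-lemma} (following Szpiro), the vanishing of $\Hom((F^*E)^*(pD),\Omega_X)$ by the slope comparison is the same key step, and the conclusion by iterating over $E, F^*E, (F^2)^*E,\dots$ together with uniform Serre vanishing for the bounded family of Theorem \ref{refl-bound} matches the paper's argument. No gaps.
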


\begin{proof} First let us prove the following

\begin{Lemma}\label{van-lemma}
\textup{(see \cite[2.1, Crit\`ere]{Szp})} Let $E$ be a torsion
free sheaf on $X$ such that $H^0(X,F^*E(-pD) \otimes\Omega_X )=0$
and $H^1(X, F^*E(-pD))=0$. Then $H^1(X, E(-D))=0$.
\end{Lemma}

\begin{proof}
Let $B^1_X$ be the sheaf of exact $1$-forms. By definition we have
an exact sequence
$$0\to \O_X\to F_*\O_X\to F_*B^1_X\to 0.$$
By assumptions and the projection formula we have
$$H^0(X, E(-D) \otimes F_*\Omega_X )=H^0(X,F^*E(-pD) \otimes\Omega_X )=0.$$
But $F_*B^1_X$ is a subsheaf of $F_*\Omega_X^1$, so $H^0(X, E(-D)
\otimes F_*B^1_X)=0$.

Tensoring  the above sequence with $E(-D)$ and using the
projection formula we get the following exact sequence
$$0\to E(-D)\to F_*(F^*E(-pD))\to E(-D)  \otimes F_*B^1_X\to 0.$$
Since
$$H^1(X, F_*(F^*E(-pD)))=H^1(X, F^*E(-pD))=0$$
we get $H^1(X, E(-D))=0$.
\end{proof}

\medskip

The family of all strongly $H$-semistable locally free sheaves $G$ of fixed rank
with vanishing Chern classes is bounded. Hence by Serre's
vanishing theorem there exists such $m_0$ that for all $m\ge m_0$
and all such $G$ we have $H^1(X, G(-p^mD))=0$. Let us also remark
that
$$H^0(X,G(-pD)  \otimes\Omega_X )=\Hom (G^*,\Omega _X(-pD) )=0,$$
since $G^*$ is semistable with slope $0$ and by assumption
$\mu_{\max}(\Omega _X(-pD))<0$. Therefore applying Lemma
\ref{van-lemma} to $E, F^*E, (F^2)^*E,\dots $ we easily get
vanishing of $H^1(X, E(-D))$.
\end{proof}

\medskip

\begin{Corollary} \label{vanishing3}
Let $\alpha$ be a non-negative integer such that $T_X(\alpha H)$
is globally generated. Assume that $X$ has dimension $d\ge 2$. Let
$E\in \Vect (X)$ and let $D$ be any divisor such that $D-\alpha H$
is ample. If
$$DH^{d-1}>\max\left((d+1)\alpha H^d- K_XH^{d-1}, \left(1+ \frac{1}{p}\right)\alpha H^d\right)$$
then $H^1(X, E\otimes \Omega_X(-D))=0$.
\end{Corollary}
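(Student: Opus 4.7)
The plan is to reduce to the already-proved Theorem \ref{vanishing} by resolving $\Omega_X$ in terms of line bundles. Since $T_X(\alpha H)$ is globally generated, there is a surjection $\O_X^N\twoheadrightarrow T_X(\alpha H)$, equivalently $\O_X(-\alpha H)^N\twoheadrightarrow T_X$. Dualizing this short exact sequence of locally free sheaves yields a short exact sequence
$$0\to \Omega_X\to \O_X(\alpha H)^N\to K^\vee\to 0$$
with $K^\vee$ locally free. Tensor it with $E(-D)$ and pass to the long exact sequence in cohomology. The relevant piece is
$$H^0(X,E\otimes K^\vee(-D))\to H^1(X,E\otimes\Omega_X(-D))\to H^1(X,E(\alpha H-D))^N,$$
so it suffices to kill the outer two groups.

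For the right-hand term I would apply Theorem \ref{vanishing} to the ample divisor $D-\alpha H$. The input there is a bound on $\mu_{\max}(\Omega_X)$, which comes for free from the inclusion $\Omega_X\hookrightarrow \O_X(\alpha H)^N$: any subsheaf of $\Omega_X$ lies inside the semistable sheaf $\O_X(\alpha H)^N$, hence $\mu_{\max}(\Omega_X)\le \alpha H^d$. The hypothesis $DH^{d-1}>(1+1/p)\alpha H^d$ then gives $(D-\alpha H)H^{d-1}>\alpha H^d/p\ge \mu_{\max}(\Omega_X)/p$, which is exactly what Theorem \ref{vanishing} needs.

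For the left-hand term, I would argue by slope-contradiction. A nonzero section of $E\otimes K^\vee(-D)$ corresponds, via Hom-tensor adjunction and the local freeness of $E$, to a nonzero map $\varphi:E^\vee(D)\to K^\vee$. Since $E^\vee$ is numerically flat, hence strongly semistable of slope $0$, the sheaf $E^\vee(D)$ is semistable of slope $DH^{d-1}$, so the image of $\varphi$ (and its saturation $\bar I\subset K^\vee$) has slope $\ge DH^{d-1}$, giving $\mu_{\max}(K^\vee)\ge DH^{d-1}$. To obtain a contradiction I would bound $\mu_{\max}(K^\vee)$ from above: for any saturated rank-$r$ subsheaf $L\subset K^\vee$, its preimage $L'\subset \O_X(\alpha H)^N$ sits in $0\to\Omega_X\to L'\to L\to 0$, so $\mathrm{rank}(L')=r+d$ and $c_1(L')=K_X+c_1(L)$; semistability of $\O_X(\alpha H)^N$ then gives
$$\mu(L)\le \alpha H^d+\frac{d\alpha H^d-K_XH^{d-1}}{r}.$$
Maximizing the right-hand side over $r\ge 1$ yields $\mu_{\max}(K^\vee)\le \max\bigl(\alpha H^d,\ (d+1)\alpha H^d-K_XH^{d-1}\bigr)$, and both numbers are dominated by the two quantities in the hypothesis on $DH^{d-1}$, producing the required contradiction.

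The main obstacle will be the bound on $\mu_{\max}(K^\vee)$: one must simultaneously handle both signs of $d\alpha H^d-K_XH^{d-1}$, which is what forces the $\max$ in the statement and explains why the first term $(d+1)\alpha H^d-K_XH^{d-1}$ appears rather than the naive $\mu(K^\vee)$. Everything else (the $H^1$-vanishing, the adjunction, and the fact that quotients/subsheaves behave well since $E$ and $K^\vee$ are locally free) is routine once this slope estimate is in place.
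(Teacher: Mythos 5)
Your proposal is correct and follows essentially the same route as the paper: resolve $\Omega_X$ by $\O_X(\alpha H)^N$ using global generation of $T_X(\alpha H)$, kill $H^1(X,E(\alpha H-D))$ via Theorem \ref{vanishing} applied to the ample divisor $D-\alpha H$, and kill $H^0$ of the cokernel term by a slope comparison with the semistable sheaf $E^*(D)$. The only (immaterial) difference is that you bound $\mu_{\max}$ of the cokernel $K$ by pulling saturated subsheaves back to $\O_X(\alpha H)^N$, whereas the paper uses $\mu_{\min}(K)\ge\alpha H^d$ together with the inequality $\mu_{\max}(K)+(N-d-1)\mu_{\min}(K)\le\deg K$; both yield a bound strictly below $DH^{d-1}$.
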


\begin{proof}
Since $T_X(\alpha H)$ is globally generated there exists a torsion
free sheaf $K$ and an integer $N$ such that we have an exact
sequence
$$0\to \Omega_X \to \O_X (\alpha H)^N \to K\to 0.$$
In particular, we have $\mu_{\max}(\Omega _X)\le \alpha H^d$
and $\mu _{\min} (K)\ge \mu _{\min} (\O_X (\alpha H)^N)=\alpha H^d$.
Since $K$ has rank $(N-d)$ we also have
$$\mu _{\max}(K)+(N-d-1)\mu _{\min }(K)\le \deg K=N\alpha H^d-K_XH^{d-1}.$$
Hence $\mu _{\max}(K) \le (d+1) \alpha H^d -K_XH^{d-1}
<DH^{d-1}=\mu _H(E^*(D))$. Because $E^*(D)$ is semistable we have
$$H^0(X,E(-D)\otimes K)=\Hom (E^*(D), K)=0.$$
Our assumptions imply that
$$\frac{\mu_{\max}(\Omega _X)}{p}< \frac{\alpha H^d}{p}\le (D-\alpha H)H^{d-1}.$$
Therefore by  Theorem \ref{vanishing} we get vanishing of
$H^1(X, E(\alpha H-D))$. Together with the above this implies
vanishing of $H^1(X, E(-D) \otimes \Omega_X )$.
\end{proof}

\medskip

\begin{Theorem} \label{vanishing2} \textup{(Vanishing theorem for $H^2$)}
Let $\alpha$ be a non-negative integer such that $T_X(\alpha H)$
is globally generated. Assume that $X$ has dimension $d\ge 3$. Let
$E\in \Vect (X)$. Let $D$ be any divisor such that $pD-\alpha H$
is ample. If
$$DH^{d-1}>\max \left(\alpha H^d, \frac{(d+1)\alpha H^d-K_XH^{d-1}}{p}\right)$$
then $H^2(X, E(-D))=0$.
\end{Theorem}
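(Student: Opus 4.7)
My plan is to follow the pattern of Theorem \ref{vanishing} and Szpiro's technique, but one cohomological degree higher. Starting from the exact sequence $0 \to \O_X \to F_*\O_X \to F_*B^1_X \to 0$, I would tensor with the locally free sheaf $E(-D)$ and apply the projection formula to obtain
$$0 \to E(-D) \to F_*(F^*E(-pD)) \to E(-D) \otimes F_*B^1_X \to 0.$$
Because $F$ is affine, the cohomology of the middle term computes $H^i(X, F^*E(-pD))$, and so the long exact sequence reduces $H^2(X, E(-D)) = 0$ to the two vanishings $H^1(X, E(-D) \otimes F_*B^1_X) = 0$ and $H^2(X, F^*E(-pD)) = 0$.

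For the $H^1$-vanishing I would use the Cartier filtration of $F_*\Omega^1_X$. Setting $Q = F_*\Omega^1_X / F_*B^1_X$, the Cartier isomorphism identifies $F_*Z^1_X / F_*B^1_X$ with $\Omega^1_X$, giving short exact sequences $0 \to F_*B^1_X \to F_*\Omega^1_X \to Q \to 0$ and $0 \to \Omega^1_X \to Q \to F_*B^2_X \to 0$. Tensoring with $E(-D)$ (exact since $E(-D)$ is locally free), the first sequence reduces $H^1(X, E(-D) \otimes F_*B^1_X) = 0$ to $H^0(X, E(-D) \otimes Q) = 0$ together with $H^1(X, E(-D) \otimes F_*\Omega^1_X) = H^1(X, F^*E(-pD) \otimes \Omega^1_X) = 0$, and the second reduces $H^0(X, E(-D) \otimes Q) = 0$ to $H^0(X, E(-D) \otimes \Omega^1_X) = 0$ and $H^0(X, E(-D) \otimes F_*B^2_X) \subset H^0(X, F^*E(-pD) \otimes \Omega^2_X) = 0$. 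The vanishing of $H^1$ of $F^*E(-pD) \otimes \Omega^1_X$ is Corollary \ref{vanishing3} applied to $F^*E \in \Vect(X)$ with divisor $pD$: the hypotheses transfer cleanly (the bound $pDH^{d-1} > (1+1/p)\alpha H^d$ follows from $DH^{d-1} > \alpha H^d$ together with $p \geq 2$, and $p^2 D - \alpha H$ is automatically ample). The two $H^0$-vanishings follow from semistability: $E^*(D)$ is semistable of slope $DH^{d-1} > \alpha H^d \geq \mu_{\max}(\Omega^1_X)$, while dualizing the surjection $\O_X(-\alpha H)^N \twoheadrightarrow T_X$ (whose kernel is locally free) embeds $\Omega^1_X$ as a local direct summand of $\O_X(\alpha H)^N$, whence $\Omega^2_X = \wedge^2 \Omega^1_X \hookrightarrow \O_X(2\alpha H)^{\binom{N}{2}}$ and $\mu_{\max}(\Omega^2_X) \leq 2\alpha H^d < pDH^{d-1}$ (again using $p \geq 2$).

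For $H^2(X, F^*E(-pD)) = 0$ I would iterate: the same argument applies to $(F^*E, pD)$ since $F^*E \in \Vect(X)$, the numerical bounds on $pDH^{d-1}$ are strictly stronger than those on $DH^{d-1}$, and $p^2D - \alpha H$ remains ample. After $m$ iterations we reduce to $H^2(X, (F^m)^*E(-p^mD)) = 0$. The family $\{(F^n)^*E\}_{n \geq 0}$ is bounded by Theorem \ref{refl-bound} (all Chern classes vanish and strong semistability is preserved), so Serre duality combined with Serre vanishing for the bounded family $\{((F^n)^*E)^* \otimes \omega_X\}$ furnishes a uniform $m_0$ such that $H^i(X, G(-p^mD)) = 0$ for all $i < d$, all $m \geq m_0$, and all $G$ in the family; taking $i = 2 < d$ closes the iteration. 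The main obstacle is the analysis of the quotient $Q$, which requires the Cartier isomorphism and a clean slope bound on $\Omega^2_X$; once that preparation is in place, every intermediate vanishing is a direct appeal to Corollary \ref{vanishing3}, to semistability, or to Serre vanishing.
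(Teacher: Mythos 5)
Your proposal is correct and follows essentially the same route as the paper: the paper's Lemma \ref{van-lemma2} chases the filtration $F_*B^1_X\subset F_*Z^1_X\subset F_*\Omega^1_X$ via the subsheaf $F_*Z^1_X$ and the Cartier sequence, while you chase it via the quotient $Q=F_*\Omega^1_X/F_*B^1_X$, but both reduce $H^2(X,E(-D))$ to the identical four vanishings $H^0(E(-D)\otimes\Omega_X)$, $H^0(F^*E(-pD)\otimes\Omega^2_X)$, $H^1(F^*E(-pD)\otimes\Omega_X)$ and $H^2(F^*E(-pD))$, verified in the same way (semistability and the embedding $\Omega^2_X\hookrightarrow\O_X(2\alpha H)^{\binom{N}{2}}$, Corollary \ref{vanishing3} applied with divisor $pD$, and Serre vanishing on the bounded family of Frobenius pull-backs to terminate the iteration). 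The only differences are cosmetic bookkeeping; your explicit use of Serre duality to get the terminal $H^2$-vanishing is a slightly more careful justification than the paper's bare appeal to Serre vanishing.
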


\begin{proof} First let us prove the following

\begin{Lemma}\label{van-lemma2} \textup{(cf. \cite[Proposition 2.31]{La2})}
Let $E$ be a torsion free sheaf on $X$ such that $H^0(X, E (-D) \otimes\Omega_X )=0$,
$H^0(X, F^*E (-pD) \otimes\Omega_X^2 )=0$,
$H^1(X,F^*E(-pD)  \otimes\Omega_X )=0$ and $H^2(X, F^*E(-pD))=0$.
Then $H^2(X, E(-D))=0$.
\end{Lemma}

\begin{proof}
Let $B^1_X$ and $Z^1_X$ be the sheaves of exact and closed $1$-forms, respectively.
Then we have the following short exact sequence
$$0\to F_*B^1_X\to F_*Z^1_X\mathop{\to}^{C}\Omega_X \to 0,$$
where $C$ is the Cartier operator. Tensoring it with $E(-D)$ and using
the projection formula we get the following  short exact sequence
$$0\to E(-D)\otimes F_*B^1_X \to E(-D)\otimes F_*Z^1_X\to E(-D) \otimes\Omega_X \to 0.$$
Using  definition of $Z^1_X$ we also have
an exact sequence
$$0\to F_*Z^1_X\to F_*\Omega_X\to F_*\Omega_X^2.$$
Again tensoring it with $E(-D)$ and using
the projection formula we get the following exact sequence
$$0\to E(-D)\otimes F_*Z^1_X\to  F_*(F^*E(-pD)\otimes \Omega_X)\to
F_*(F^*E(-pD)\otimes \Omega_X^2).$$
Using this sequence we see that vanishing
of $H^0(X, F^*E (-pD) \otimes\Omega_X^2 )$ and $H^1(X,F^*E(-pD)  \otimes\Omega_X )$
implies vanishing of $H^1(E(-D)\otimes F_*Z^1_X)$.
Vanishing of this group together with vanishing of
$H^0(X,E (-D) \otimes\Omega_X )$ implies vanishing of $H^1(X,E(-D)\otimes F_*B^1_X)$.
But from the long cohomology exact sequence this, together with
vanishing of $H^2(X, F^*E(-pD))$ implies vanishing of $H^2(X,
E(-D))$.
\end{proof}

As before the family of all strongly $H$-semistable locally free
sheaves $G$ of fixed rank with vanishing Chern classes is bounded and by Serre's
vanishing theorem there exists such $m_0$ that for all $m\ge m_0$
and all such $G$ we have $H^2(X, G(-p^mD))=0$.

Since $DH^{d-1}>\alpha H^d\ge {\mu_{\max}(\Omega _X)}$ we get
vanishing of $H^0(X,G(-D)  \otimes\Omega_X )$.

Now note that $\Omega_X^2$ is a subsheaf of $\bigwedge ^2(\O _X(\alpha H)^N)=\O_X (2\alpha H)^{N\choose 2}$. This implies that $$\mu_{\max}(\Omega _X^2)\le 2\alpha H^d<pDH^{d-1}=\mu
(G^*(pD)).$$ Therefore
$$H^0(X,G(-pD)  \otimes\Omega_X^2 )=\Hom (G^*(pD), \Omega_X^2)=0.$$
By assumption we have
$$pDH^{d-1}>\max\left((d+1)\alpha H^d-K_XH^{d-1},
\left(1+ \frac{1}{p}\right)\alpha H^d \right) .$$  Therefore by
Corollary \ref{vanishing3} we also have $H^1(X, G(-pD)\otimes
\Omega_X )=0$.

Now we finish  proof of the theorem by applying Lemma
\ref{van-lemma2} to $E, F^*E, (F^2)^*E,\dots $
\end{proof}

\section{Lefschetz type theorems for the S-fundamental group scheme}

In this section we prove Lefschetz type theorems for the
S-fundamental group scheme.

\medskip

Let us recall the following example. It appeared essentially in
\cite[p.181]{Szp} and then it reappeared with the interpretation
below in \cite[Section 2]{BH}.

\begin{Example}
Let $D$ be an ample effective divisor violating the Kodaira
vanishing theorem in positive characteristic (i.e., such that
$H^1(\O _X (-D))\ne 0$). Let us recall that a non-zero element $c$
of $H^1(\O_X)$ gives rise to a non-trivial extension $E$ of $\O_X$
by $\O_X$. If the class $c$ of $H^1(\O _X)$ is in the kernel of
$H^1(\O_X )\to H^1(\O_D)$ then $E_D\simeq \O_D\oplus \O_D$. By
Serre's vanishing theorem, action of the Frobenius morphism on
elements of the kernel of $H^1(\O_X )\to H^1(\O_D)$ is nilpotent.
Therefore $(F^m)^*E\simeq \O_{X}^2$ for large $m$.

This gives an example of a non-trivial representation of $\pi_1^S
(X,x)$ which is trivial on the image of $\pi_1^S(D,x)$ (obviously
this holds already on the level of the Nori's fundamental group
scheme). In particular, $\pi_1^S (D,x)\to \pi_1^S(X,x)$ is not
surjective.

\medskip
We can also interpret the above example in the following way which
explains connection with \cite{Szp}. Let $\alpha _{p^n}$ denotes
the group scheme on $X$ defined by
$$\alpha _{p^n} (U) =\{ t\in \Gamma (U, \O _U) : t^{p^n}=0\} .$$
Then we have the following exact sequence (only in fppf topology)
$$0\to \alpha_{p^n}\to {\mathbb G}_a  \mathop{\to}^{F^n} {\mathbb G}_a \to 0,$$
where the last map is given by $t\to t^{p^n}$. Using this one can
easily see that
$$H^1_{fl}(X, \alpha _{p^n})=\ker \left(H^1(X, \O_X)\mathop{\to}^{F^n} H^1(X, \O_X) \right) .$$
But $H^1_{fl}(X, \alpha _{p^n})$ is the set of
$\alpha_{p^n}$-torsors on $X$ and each such  torsor gives an
element of Nori's fundamental group. Therefore the example says
that there exists a nontrivial  element of $H^1_{fl}(X, \alpha
_{p^n})$ whose restriction to $D$ gives a trivial
$\alpha_{p^n}$-torsor.  But we know that the action of the
Frobenius on $H^1(X, \O_X (-D))$ is nilpotent so any non-zero
element of $H^1(X, \O_X (-D))$ gives such a torsor for some $n\ge
1$.
\end{Example}

\medskip

In this section $X$ is a smooth $d$-dimensional projective variety
defined over an algebraically closed field $k$ and $H$ is an ample
divisor on $X$.

\begin{Theorem} \label{Lefschetz1}
Let $D\subset X$ be any ample smooth effective divisor. If $d\ge
2$ and
$$DH^{d-1}>\mu_{\max}(\Omega _X)$$ then $\pi_1^S
(D,x)\to \pi_1^S(X,x)$ is a faithfully flat homomorphism.
\end{Theorem}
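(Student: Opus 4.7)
The plan is to apply the Tannakian criterion \cite[Proposition~2.21(a)]{DM} to the restriction functor $r\colon \Vect(X)\to \Vect(D)$, $E\mapsto E_D$. Faithful flatness of $\pi^S_1(D,x)\to \pi^S_1(X,x)$ reduces to two conditions: $(i)$ $r$ is fully faithful on Hom-sets, and $(ii)$ every numerically flat subbundle $N\subset E_D$ (with $E\in\Vect(X)$) is the restriction of a numerically flat subbundle $\ti N\subset E$.

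For $(i)$, given $E_1,E_2\in\Vect(X)$, set $G:=E_1^{*}\otimes E_2\in\Vect(X)$ (using closure of $\Vect(X)$ under tensor products and duals). The long exact sequence associated to
$$0\to G(-D)\to G\to G_D\to 0$$
reduces the claim to $H^0(X,G(-D))=H^1(X,G(-D))=0$. The first vanishing is automatic since $G$ is semistable of slope zero, so $G(-D)$ is semistable of strictly negative slope $-DH^{d-1}$. For the second, the hypothesis $DH^{d-1}>\mu_{\max}(\Omega_X)$ implies $DH^{d-1}>\mu_{\max}(\Omega_X)/p$: if $\mu_{\max}(\Omega_X)>0$, then $\mu_{\max}(\Omega_X)/p\le\mu_{\max}(\Omega_X)<DH^{d-1}$; if $\mu_{\max}(\Omega_X)\le 0$, then ampleness of $D$ gives $DH^{d-1}>0\ge\mu_{\max}(\Omega_X)/p$. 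Theorem~\ref{vanishing} then yields $H^1(X,G(-D))=0$.

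Part $(ii)$ is the main obstacle. Given $N\subset E_D$ with quotient $Q:=E_D/N\in\Vect(D)$, the plan is to extend the surjection $E_D\twoheadrightarrow Q$ formally along $D$ and then algebraize. Inductively assume a flat surjection $\pi_n\colon E|_{D_n}\twoheadrightarrow Q_n$ on the $n$-th infinitesimal neighborhood $D_n\subset X$ of $D$ has been built, restricting to the prescribed quotient on $D$. Standard Quot-scheme deformation theory identifies the obstruction to lifting to $D_{n+1}$ with a class in
$$\Ext^1_{\O_D}\bigl(N,\ Q\otimes I_D^n/I_D^{n+1}\bigr)\ \cong\ H^1\bigl(D,\ (N^{*}\otimes Q)(-nD)\bigr),$$
with $N^{*}\otimes Q\in\Vect(D)$. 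These groups should be killed by applying the Section~9 vanishing theorems on $D$ (noting that $D|_D$ is ample on $D$ by Nakai--Moishezon), using the conormal sequence $0\to \O_D(-D)\to \Omega_X|_D\to \Omega_D\to 0$ to transfer the hypothesis on $\mu_{\max}(\Omega_X)$ to a bound on $\mu_{\max}(\Omega_D)$. Once a compatible formal surjection on $\hat X_D$ is produced, ampleness of $D$ enables Grothendieck's effective formal existence theorem to algebraize it to $E\twoheadrightarrow\ti Q$ on $X$; both $\ti N:=\ker(E\to\ti Q)$ and $\ti Q$ are then verified to be numerically flat by restriction to curves and comparison with the given data on $D$.

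The chief difficulty is controlling the whole tower of obstructions uniformly in $n$ and ensuring that the algebraization restricts back to the prescribed $N\subset E_D$. The hypothesis $DH^{d-1}>\mu_{\max}(\Omega_X)$ is calibrated precisely to kill the first-order obstruction via Theorem~\ref{vanishing}, with higher orders expected to follow by a bootstrapping argument. An additional subtlety arises in the boundary case $d=2$, where $D$ is a curve and the cohomological toolkit on $D$ is more limited; there Theorem~\ref{loc-free} (ensuring local freeness of the extended quotient) will likely need to be invoked.
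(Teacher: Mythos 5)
Your part (i) follows the paper's argument for condition (a) exactly and is correct, including the observation that $DH^{d-1}>\mu_{\max}(\Omega_X)$ implies the bound $DH^{d-1}>\mu_{\max}(\Omega_X)/p$ required by Theorem \ref{vanishing}. Part (ii), however, takes a route that cannot work under the stated hypotheses. Your plan --- extend the quotient $E_D\twoheadrightarrow Q$ over the infinitesimal neighborhoods $D_n$, kill obstructions in $H^1(D,(N^*\otimes Q)(-nD))$, and algebraize --- is essentially the machinery of Theorem \ref{Lefschetz2}, which needs $d\ge 3$ and much stronger numerical hypotheses. Concretely: (1) for $d=2$ the divisor $D$ is a curve, and $H^1(D,(N^*\otimes Q)(-nD))$ is Serre-dual to $H^0$ of a semistable sheaf of positive degree, so these obstruction spaces are generically nonzero; yet the theorem is asserted for $d\ge 2$. (2) Even for $d\ge 3$, applying the Section 9 vanishing on $D$ requires control of $\mu_{\max}(\Omega_D)$, and the conormal sequence $0\to\O_D(-D)\to\Omega_X|_D\to\Omega_D\to 0$ only bounds $\mu_{\max}(\Omega_X|_D)$ by $\max(-D^2H^{d-2},\mu_{\max}(\Omega_D))$, not the reverse (a quotient can have strictly larger $\mu_{\max}$ than the ambient sheaf), so the hypothesis on $\mu_{\max}(\Omega_X)$ does not transfer to $D$. (3) The ``bootstrapping'' of higher-order obstructions is not substantiated; the introduction explicitly warns that extending bundles from ample divisors requires vanishing one cannot hope for even in characteristic zero, and Theorem \ref{Lefschetz2} itself sidesteps the issue by replacing $E'$ with $(F^m)^*E'$, which extends to $D_{p^m}$ for free, rather than killing all obstructions.

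The paper's proof of condition (b) avoids extension across infinitesimal neighborhoods entirely. It reduces to lifting the first step $E_1$ of a Jordan--H\"older filtration of $E_D$. When all Jordan--H\"older quotients of $E$ are strongly stable, Theorems \ref{loc-free} and \ref{bogomolov} show that each restricted quotient $E^j_D$ is strongly stable, so $E_1\simeq E^{j_0}_D$ for some $j_0$, and the Hom-isomorphism from part (a) lifts the inclusion $E_1\subset E_D$ to $X$. In general one passes to $(F^m)^*E$ for suitable $m$, lifts $(F_D^m)^*E_1$ there, and then descends the resulting subsheaf step by step under Frobenius: the obstruction to Cartier descent is an $\O_X$-homomorphism $E_i'\to E_i''\otimes\Omega_X$ induced by the canonical connection, and it is shown to vanish by comparing with its restriction to $D$ (which is zero by construction), using the exact sequences for $\Omega_X(-D)$ and $\Omega_X|_D$ together with semistability and $\mu_{\max}(\Omega_X(-D))<0$. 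This Frobenius-descent step is where the full-strength hypothesis $DH^{d-1}>\mu_{\max}(\Omega_X)$ is actually used --- not, as you suggest, in calibrating Theorem \ref{vanishing}, which only needs the weaker bound with $1/p$.
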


\begin{proof}
By \cite[Proposition 2.21 (a)]{DM} we need to show that
\begin{description}
\item{(a)} the functor $\Vect (X,x)\to \Vect (D,x)$ is fully
faithful,
\item{(b)} every subbundle of degree $0$ in the restriction $E_D$
of $E\in \Vect(X)$ is isomorphic to the restriction of a subbundle
of $E$.
\end{description}

To show (a) we need to prove that for $E',E''\in \Vect(X)$ the
restriction
$${\Hom }_X(E',E'')\to {\Hom }_D(E'_D,E''_D)$$
is an isomorphism. But from the short exact sequence
$$0\to {\cHom }_X(E',E'')\otimes \O_X(-D)\to {\cHom }_X(E',E'')\to {\cHom }_D(E'_D,E''_D)\to 0$$
we see that it is sufficient to show that $H^i(X,{\cHom
}_X(E',E'')\otimes \O_X(-D))=0$ for $i=0,1$. Since ${\cHom
}_X(E',E'')\in \Vect (X)$, this follows from Theorem
\ref{vanishing} and the remark preceding it.

To prove (b) let us note that for every degree $0$ subbundle of
$E_D$ there exists a Jordan--H\"older filtration $0=E_0\subset
E_1\subset \dots \subset E_m=E_D$ and some index $j$ such that
this subbundle is equal to $E_j$. So it is sufficient to lift this
filtration to a filtration of $E$.

First we prove this for sheaves such that all quotients in any
Jordan--H\"older filtration of $E$ are strongly stable.  More
precisely, let us consider the following assertion: for all
sheaves $E\in \Vect (X)$ of rank $\le r$ and such that all
quotients in any Jordan--H\"older filtration of $E$ are strongly
stable if $0=E_0\subset E_1\subset \dots \subset E_m=E_D$ is  a
Jordan--H\"older filtration of $E_D$ then $E_i$ lifts to a
subsheaf of $E$. We prove it by induction on $r$. The case $r=1$
is obvious. So assume that we know it for $r-1$ and consider a
rank $r$ sheaf $E$ satisfying the above condition. Note that it is
sufficient to lift the first subsheaf $E_1$ to a subsheaf
$E'\subset E$ and use the induction assumption for $E/E'$.

To lift $E_1$ let us take an arbitrary Jordan--H\"older filtration
$0=E_0'\subset E_1'\subset \dots \subset E_n'=E$ of $E$.  By
Theorem \ref{loc-free} each quotient $E^j=E'_j/E'_{j-1}$ is
locally free and by Theorem \ref{bogomolov} the restriction
$E^j_D$ is strongly stable. In particular, we have $n>1$ (unless
$m=1$, in which case $E$ is the required lift). Therefore there
exists some $j_0$ such that $E_1$ is isomorphic to $E^{j_0}_D$
(every non-zero map from $E_1$ to any of the sheaves $E^j_D$ is an
isomorphism). But we already know by (a) that the restriction map
$${\Hom }_X(E^{j_0},E)\to {\Hom }_D(E_1,E_D)$$
is an isomorphism so we can lift the inclusion $E_1\subset E_D$
and it clearly lifts to an inclusion.

Now let us consider the general case. Let us choose $m$ such that
all quotients in any Jordan-H\"older filtration  of $\ti
E=(F^m_X)^*E$ are strongly stable. The restriction $\ti E_D\simeq
(F^m_D)^*(E_D) $ contains $(F^m_D)^*(E_1)$ which by the above is
isomorphic to the restriction $\ti E'_D$ of some subsheaf $\ti E'$
of $\ti E$. We claim that for every $0\le i\le m$ there exists a
subsheaf $E'_i\subset (F^{m-i}_X)^*E$ such that $\ti
E'=(F^i)^*E'_i$ and $(E'_i)_D\simeq (F^{m-i}_D)^*(E_1)$. In
particular for $i=m$ we get the subsheaf of $E$ that we were
looking for. We prove the above assertion by induction on $i$. For
$i=0$ the claim is clear as we already have $E'_0=E'$. Assume that
we constructed $E'_i$ for some $i<m$. Let us set
$E_i''=((F^{m-i}_X)^*E)/E_i'$. We only need to show that there
exists $E_{i+1}'\subset (F^{m-i-1}_X)^*E$ such that
$F^*_XE_{i+1}'\simeq E_i'$. If such a sheaf does not exist then
the $\O_X$-homomorphism $E_i'\to E_i'' \otimes\Omega_X $, induced
from the canonical connection $\nabla _{can}: (F^{m-i}_X)^*E \to
(F^{m-i}_X)^*E  \otimes\Omega_X $ coming from Cartier's descent,
is non-zero (see, e.g., \cite[Theorem 2.1]{La1}; see also
\cite[Lemma 2.3]{La1} for a similar assertion). But we have a
commutative diagram
$$ \xymatrix{ & E_i'\ar[r] \ar[d] &E_i'' \otimes\Omega_X
\ar[d]\\  &(E_i')_D\ar[r]^-{0} &(E_i'')_D \otimes\Omega_D \\
}$$ where the lower map is similarly induced from the canonical
connection and it is zero because $(E_i')_D$ descends to a
subsheaf of $(F^{m-i-1}_D)^*(E_D)$ by construction. Now using the
exact sequence $$0\to \Omega_X(-D) \to \Omega_X\to  \Omega
_X|_D\to 0$$ we see that if $E_i'\to E_i'' \otimes\Omega_X \otimes \O_D $
is zero, then $E_i'\to E_i'' \otimes\Omega_X $ induces a non-zero map
$E_i'\to  E_i'' \otimes\Omega_X (-D) $
or equivalently a non-zero map $E_i'\otimes (E_i'')^*\to \Omega
_X(-D)$ . But $E_i'$ and $E_i''$ are strongly semistable of slope
$0$, so $E_i'\otimes (E_i'')^*$ is also strongly semistable. Since
by assumption $\mu _{\max} (\Omega_X(-D))<0$ the above map is
zero, a contradiction. Therefore $(E_i')_D\to (E_i'')_D  \otimes \Omega_X |_D$
is non-zero. So using the exact sequences
$$0\to \O_D(-D) \to \Omega _X|_D\to \Omega _D \to 0$$
we see that this map lifts to a non-zero map $(E_i')_D\to
(E_i'')_D \otimes \O_D(-D)$. But there are no non-zero maps
between $(E_i')_D$ and $(E_i'')_D \otimes \O_D(-D)$ because both
sheaves are semistable and the second one has smaller slope. This
finishes the proof  the theorem.
\end{proof}

\medskip

As a corollary of the above proof of (b)  we get the
following:

\begin{Corollary} \label{bogomolov-cor2}
Let $E\in \Vect (X)$, $d\ge 2$. Let $D$ be any ample smooth
effective divisor such that $DH^{d-1}>\mu_{\max}(\Omega _X)$. If
$E$ is stable then $E_D$ is also stable.
\end{Corollary}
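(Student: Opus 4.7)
The plan is to read the conclusion out of the construction used to prove part (b) of Theorem \ref{Lefschetz1}: what was built there is not merely a subsheaf of $E$, but one that again lies in $\Vect (X)$, and this together with slope stability of $E$ forces a contradiction.

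Suppose for contradiction that $E_D$ is not stable. By Corollary \ref{strong-bogomolov} we have $E_D\in \Vect (D)$, so $E_D$ is $H_D$-semistable of slope $0$; hence any Jordan--H\"older filtration of $E_D$ has at least two terms, and we may pick a proper saturated subsheaf $E_1\subset E_D$ which is a term in such a Jordan--H\"older filtration. In particular $E_1\in \Vect (D)$. The proof of assertion (b) in Theorem \ref{Lefschetz1} produces a subsheaf $E'\subset E$ with $E'_D\simeq E_1$ by first passing to $\ti E=(F^m)^*E$ whose Jordan--H\"older factors are strongly stable, lifting $(F^m_D)^*E_1$ to $\ti E'\subset \ti E$ as one of these factors (which is itself in $\Vect (X)$ by Theorem \ref{loc-free}), and then Cartier-descending through Frobenius $m$ times to obtain $E'\subset E$.

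Now I would check that $E'$ lies in $\Vect (X)$. The initial lift $\ti E'$ is numerically flat by construction, and at each descent step one obtains $E'_{i+1}\subset (F^{m-i-1})^*E$ with $F^*E'_{i+1}\cong E'_i$; by the remark in Subsection 1.2 that a bundle is numerically flat iff its pull-back by a finite surjective morphism is numerically flat (applied to $F$), each $E'_{i+1}$ is numerically flat as soon as $E'_i$ is. Iterating, $E'=E'_m\in \Vect (X)$, so in particular $\mu _H(E')=0$.

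Since $E'_D=E_1$ is a proper subsheaf of $E_D$, the sheaf $E'$ is a proper subsheaf of $E$; it has rank strictly less than $r=\mathop{\rm rk}E$ and slope $\mu _H(E')=0=\mu _H(E)$, directly contradicting the assumed slope stability of $E$ with respect to $H$. The main point where care is needed is step 3: verifying that every Frobenius descent in the construction of (b) stays within $\Vect (X)$, which relies on the fact that the existence of the descent (the vanishing of the obstruction $E_i'\to E_i''\otimes \Omega_X$ proved there using $\mu_{\max}(\Omega_X)<DH^{d-1}$) has already been established in the proof of Theorem \ref{Lefschetz1}, so only the numerical flatness of each successive descent needs to be observed.
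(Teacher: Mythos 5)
Your proposal is correct and follows essentially the same route as the paper, which derives the corollary directly from the construction in part (b) of Theorem \ref{Lefschetz1}: a proper Jordan--H\"older term of $E_D$ lifts to a proper slope-zero subsheaf of $E$, contradicting stability. The only minor imprecision is that the lift $\ti E'$ of $(F^m_D)^*E_1$ is in general an iterated extension of Jordan--H\"older factors of $\ti E$ rather than a single one, but it still lies in $\Vect(X)$, so your slope computation and the contradiction go through unchanged.
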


\medskip

\begin{Theorem} \label{Lefschetz2}
Let us assume that $d\ge 3$ and $T_X(\alpha H)$ is globally
generated for some non-negative integer $\alpha$. Let $D\subset X$
be any ample smooth effective divisor such that $D-\alpha H$ is
ample. If
$$DH^{d-1}>\max \left(p\alpha H^d, (d+1)\alpha H^d-{K_XH^{d-1}}\right)$$
then $\pi_1^S (D,x)\to \pi_1^S(X,x)$ is an isomorphism.
\end{Theorem}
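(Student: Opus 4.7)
The plan is to upgrade the faithfully flat map of Theorem \ref{Lefschetz1} to an isomorphism. First I verify that the present hypotheses imply those of Theorem \ref{Lefschetz1}: global generation of $T_X(\alpha H)$ yields an embedding $\Omega_X \hookrightarrow \O_X(\alpha H)^{\oplus N}$, so $\mu_{\max}(\Omega_X) \le \alpha H^d$, and from $DH^{d-1} > p\alpha H^d \ge \alpha H^d$ one deduces $DH^{d-1} > \mu_{\max}(\Omega_X)$. Hence the restriction functor $\Vect (X) \to \Vect (D)$ is fully faithful, and by \cite[Proposition 2.21]{DM} it remains to establish essential surjectivity: every $E \in \Vect (D)$ must lift to some $\ti E \in \Vect (X)$ with $\ti E|_D \simeq E$.

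The natural strategy is to extend $E$ formally across the infinitesimal neighborhoods $D_n \subset X$ of $D$, then algebraize, then verify numerical flatness. The obstruction to extending a bundle $E_n$ on $D_n$ to $D_{n+1}$ sits in $H^2(D, \End (E) \otimes \O_D(-nD))$, and the torsor of lifts is controlled by $H^1(D, \End (E) \otimes \O_D(-nD))$. Since both $E$ and $\End (E)$ lie in $\Vect (D)$, the idea is to kill these groups using the vanishing Theorems \ref{vanishing} and \ref{vanishing2} applied on $D$ (which has dimension $d-1 \ge 2$) to the ample divisors $nD|_D$, for all $n \ge 1$. The numerical inequality $DH^{d-1} > \max(p\alpha H^d, (d+1)\alpha H^d - K_X H^{d-1})$ is calibrated, via adjunction $K_D = (K_X + D)|_D$, precisely so that after restriction the hypotheses of those vanishing theorems hold on $D$ with polarization $H|_D$ and ample divisor class $D|_D$. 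Uniqueness of the lift at each stage (via $H^1$-vanishing) lets us glue $\{E_n\}$ into a formal vector bundle $\hat E$ on the formal completion $\hat X_D$.

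Algebraization of $\hat E$ to an honest coherent sheaf on $X$ is provided by Grothendieck's formal existence theorem together with ampleness of $D$, producing a vector bundle $\ti E$ with $\ti E|_D \simeq E$. To conclude that $\ti E \in \Vect (X)$ I use that $\Vect (X)$ is cut out by strong semistability together with $\ch_1 \cdot H^{d-1} = \ch_2 \cdot H^{d-2} = 0$ (Proposition \ref{num-nef}); the intersection-theoretic Chern class conditions on $X$ reduce, via a Lefschetz-type argument using that $D$ is ample, to the corresponding vanishings on $D$, which are known since $E \in \Vect (D)$. Strong semistability of $\ti E$ follows from that of $\ti E|_D = E$ by the restriction theorems already in hand. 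The main obstacle is the $H^2$-vanishing on $D$: the boundary case $d = 3$ (so $D$ is a surface) is the most delicate, and one must exploit the Frobenius-descent argument underlying Theorem \ref{vanishing2}, which is the structural reason the numerical hypothesis carries an extra factor of $p$ compared with a classical Kodaira-type inequality.
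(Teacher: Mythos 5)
Your reduction to essential surjectivity of the restriction functor $\Vect (X)\to \Vect (D)$ is the right first step and matches the paper, but the central mechanism you propose for the extension --- killing the obstruction groups $H^2(D,\End (E)\otimes \O_D(-nD))$ for all $n\ge 1$ by applying the vanishing theorems on $D$ --- fails, and this failure is precisely the difficulty the whole argument is built around. When $d=3$ the divisor $D$ is a surface, so Theorem \ref{vanishing2} (which requires dimension $\ge 3$) is not even available on $D$; worse, by Serre duality $H^2(D,\End (E)\otimes \O_D(-nD))$ is dual to $H^0(D,\End (E)\otimes \O_D(nD)\otimes \omega_D)$, which is \emph{nonzero} for all large $n$ because $\O_D(D)$ is ample. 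No calibration of the hypothesis on $DH^{d-1}$ can make these obstruction spaces vanish; the paper flags exactly this point at the end of Section \ref{Section-p^2}. Your closing remark that one should ``exploit the Frobenius-descent argument underlying Theorem \ref{vanishing2}'' gestures at the right phenomenon but does not supply the missing idea.

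The paper's actual route is structurally different. First, $(F_D^m)^*E'$ extends for free to the thickening $D_{p^{m}}$ (pull back the extension by zero of $E'$ under a power of $F_X$), and a boundedness-plus-semicontinuity argument (the loci $S_n\subset S$ of sheaves in a bounded family extending to $D_n$ are closed and decreasing, hence stabilize) shows that $(F_D^m)^*E'$ extends to the formal completion of $X$ along $D$ for some large $m$; the effective Lefschetz condition from SGA 2 algebraizes this, and Theorem \ref{loc-free} makes the resulting reflexive extension locally free. The second and harder step is to descend from $(F_D^m)^*E'$ back to $E'$: assuming $m=1$ is minimal, one shows the Atiyah class of the extension $E_1$ of $F_D^*E'$ vanishes, chooses a connection on $E_1$ restricting to the canonical connection on $D$, proves its $p$-curvature is zero, and applies Cartier descent to extend $E'$ itself, a contradiction. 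It is in proving injectivity of the restriction maps appearing in this descent --- applied on $X$ to twists such as $G(-D)$, $G(-2D)$ and $G\otimes \Omega_X(-D)$ with $G=\End E_1$ --- that Theorems \ref{vanishing} and \ref{vanishing2} and Corollary \ref{vanishing3} enter, and where the numerical hypotheses on $DH^{d-1}$ are used; they are not used to kill deformation obstructions on $D$. (A smaller point: your claim that strong semistability of the extension follows from that of its restriction ``by the restriction theorems already in hand'' is backwards --- those theorems go from $X$ to $D$; one instead argues via semistability with respect to the mixed polarization $(D,H,\dots ,H)$ and Proposition \ref{independence}.)
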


\begin{proof}
It is sufficient to show that for every strongly semistable
locally free sheaf $E'$ on $D$ with $\ch _1(E')\cdot H^{d-1}=0$
and $\ch _2 (E')\cdot H^{d-2}=0$ there exists a locally free sheaf
$E$ on $X$ such that $E'\simeq E_D$. Then $E$ is also strongly
semistable and   $\pi_1^S (D,x)\to \pi_1^S(X,x)$ is a closed
immersion by \cite[Proposition 2.21 (b)]{DM}. Then the assertion
follows from the previous theorem.

Let $D_n$ denote the scheme whose topological space is $D$ and the
structure sheaf is $\O _X/I_D^n$ (so $D_n$ is just the divisor
$nD$ with a natural scheme structure induced from $X$).

\begin{Lemma}
Let $S$ be a $k$-scheme of finite type. Let $\cS$ be a bounded set
of coherent sheaves on $D$. There exists $n_0$ such that for all
$n\ge n_0$ the following holds. Let $\F$ be an $S$-flat family of
locally free sheaves on $D_{n_0}$ such that $\F |_{D\times
\{s\}}\in \cS$ for every $s\in S$. Then the set $S_n\subset S$ of
points $s\in S$ such that $\F_s$ can be extended to a locally free
sheaf on $D_n\subset X$ is closed. Moreover, for large $n$, $\F
|_{D_{n_0}\times S_{n}}$ can be extended to an $S_n$-flat family
of locally free sheaves on the formal completion of $X$ along $D$.
\end{Lemma}

\begin{proof}
Let $p:D\times S \to S$ and $q:D\times S \to D$ be the natural
projections. Let  $\cExt _p^j(E, \cdot)$ be the $j$th derived
functor of $\cHom _p(E,\cdot)=p_*\circ \cHom (E, \cdot) $ (see,
e.g., \cite[10.1.7]{HL} for definition and basic properties of
these functors). Let us set
$$\G=\cExt _p^2(\F, \F\otimes q^*\O_D(-nD)).$$
Let us take $n_0$ such that for all  $n\ge n_0$,  $\Ext _D ^i(\F
_s, \F _s\otimes \O_D(-nD))$ are for all $k$-points $s\in S$ equal
to zero for $i\le 1$ and have the same dimension for $i=2$
(existence of such $n_0$ follows, e.g., from \cite[Chapter III,
Proposition 6.9 ]{Ha2}; note that we use the fact that $D$ has
dimension $\ge 2$). Then $\G$ is locally free and it commutes with
base-change. In particular, applying the base change for the map
$s:\Spec k\to S$ mapping the point $(0)$ to $s\in S$ we get an
isomorphism
$$\G _s\simeq {\Ext }^2_D(\F _s, \F _s\otimes \O_D(-nD)).$$

Using induction, it is sufficient to prove the assertion from the
lemma for $n=n_0+1$ (then in the same way one can prove it for
$n_0+2$ and so on).

Let $\ob '(\F)\in \Ext ^2_{D\times S} (\F, \F\otimes q^*\O_D(-nD))$
be an obstruction to extend $\F$ from
$D_{n_0}\times S$ to $D_{n}\times S$.
Let $\ob (\F)$ be the image of $\ob '(\F)$ under the map
$$\Ext _{D\times S}^2(\F, \F\otimes q^*\O_D(-nD)) \to
H^0(S, \cExt _p^2(\F, \F\otimes q^*\O_D(-nD))) $$ obtained from
the global to local spectral sequence $H^i (S, \cExt ^i_p
)\Rightarrow \Ext ^{i+j}_{D\times S}$ (note that by our
assumptions the beginning of the spectral sequence degenerates and
the above map is in fact an isomorphism). Then for every $k$-point
$s\in S$ the germ $\ob (\F) _s=\ob (\F_s)\in {\Ext }^2_D(\F _s, \F
_s\otimes \O_D(-nD))$ is an obstruction to extend $\F _s$ from
$D_{n_0}$ to $D_n$. So $S_n$ is just the zero set of section $\ob
(\F)$ in $S$.
\end{proof}

Let us take a flat family $\F$ of sheaves on $D$  parameterized by
some $k$-scheme $S$ of finite type and such that it contains all
sheaves $\{(F_D^n)^* E'\}_{n}$. Let $s_n\in S$ be such that $\F
_{s_n}\simeq (F_D^n)^* E'$. Consider $\F$ as a sheaf on $X\times
S$ extending it by zero (this sheaf is no longer locally free on
$X\times S$). Taking $\F'=(F_X^{n_0}\times \id_S)^* \F$ we get a
sheaf on $X\times S$, whose restriction to $D\times S$ is
$(F_D^{n_0}\times \id _S)^* \F$. But we can consider $\F'$ as an
$S$-flat family of locally free sheaves on $D_{n_0}$ and hence we
can apply the above lemma. Note that $\F '_{s_{m}}\simeq
(F_D^{m+n_0})^*E'$ can be extended to $D_{p^{m+n_0}}$  so $s_m$
belongs to $S_{p^{m+n_0}}$. But the sequence $\dots \subset
S_{n+1}\subset S_{n}\subset \dots \subset S_{n_0}=S$ stabilizes
starting with some $n_1$: $S'=S_{n_1}=S_{n_1+1}=\dots$ of $S$. By
the above there exists $m_0$ such that for all $m\ge m_0$ we have
$s_m\in S_{n_1}=S'$. Therefore for large $m$ we can extend
$(F_D^{m})^*E'$ to a locally free sheaf $\hat{E} _m$ on the formal
completion of $X$ along $D$. By \cite[Expos\'e X, Exemple 2.2]{Gr}
the pair $(X,D)$ satisfies the effective Lefschetz condition. In
particular, there exists an open set $U\supset D$ and a locally
free sheaf $E'_m$ on $U$ such that the formal completion of $E'_m$
is isomorphic to $\hat{E} _m$. Now set $E_m=j_*E'_m$, where $j:
U\hookrightarrow X$ denotes the open embedding. This is a
reflexive sheaf on $X$ such that $(F_D^m)^*E'\simeq (E_m)_D$.
Therefore $E_m$ is strongly semistable and by Theorem
\ref{loc-free} it is also locally free.

Let us take the smallest $m\ge 0$ such that $(F_D^m)^*E'$ can be
extended to a locally free sheaf $E_m$ on $X$. We need to prove
that $m=0$. Let us assume that $m\ge 1$. Replacing $E'$ with
$(F^{m-1}_D)^*E'$ we can assume that $m=1$. Then $F_D^*E'$ extends
to a vector bundle $E_1$ on $X$ and it has the canonical
connection $\nabla _{can} : F_D^*E'\to  F_D^*E'  \otimes\Omega_D$.

Let us recall that an obstruction to existence of a connection on
a vector bundle $E$ on a smooth variety $X$ is given by the Atiyah
class $A(E)\in \Ext ^1 _X(E, E\otimes \Omega _X)=H^1(X, \End
E\otimes \Omega_X)$.

 In our case we have a sequence of maps
$$H^1(X, \End E_1 \otimes \Omega_X) \mathop{\longrightarrow} ^{\alpha_0} H^1(X, \End E_1\otimes \Omega_X|_D) \mathop{\longrightarrow}^{\beta _0} H^1(D, \End
(E_1)_D\otimes \Omega_D)$$ mapping $A(E_1)$ to
$A((E_1)_D)=A(F^*_DE')=0$. Let us set $G=\End E_1$.  Note that
$\alpha _0$ is injective if $H^1(X, G\otimes \Omega_X(-D))=0$ and
$\beta _0$ is injective if $H^1(D, G_D (-D))=0$. Since $G$ is
strongly semistable, vanishing of the first cohomology group
follows from Corollary \ref{vanishing3} and our assumptions on
$DH^{d-1}$. To get vanishing of the second group we can use the
sequence
$$0\to G(-2D)\to G(-D) \to G _D(-D)\to 0$$
from which we see that it is sufficient to prove that $H^1(X,
G(-D))=H^2(X, G(-2D))=0$. This follows from Theorem
\ref{vanishing}, Theorem \ref{vanishing2} and our assumptions on
$D$ and $H$. Therefore $A(E_1)=0$ and $E_1$ has some connection
$\nabla ^1$.

We need to show that $E_1$ has a connection $\nabla$ such that on
$D$ it induces the connection $\nabla _{can}$ of $F^*_DE'_D$. Let
$\nabla ^1_D$ denotes the connection induced from $\nabla _1$ on
$D$. As above we have a sequence of maps
$$H^0(X, G \otimes \Omega_X) \mathop{\longrightarrow} ^{\alpha_1} H^0(X, G\otimes \Omega_X|_D) \mathop{\longrightarrow}^{\beta _1} H^0(D, G_D\otimes
\Omega_D).$$ Since $H^0(X, G\otimes \Omega _X(-D))=H^1(X,G\otimes
\Omega _X(-D))=0$, $\alpha _1$ is an isomorphism. Similarly,
$\beta _1$ is an isomorphism since  $H^0(D, G_D(-D))=H^1(D,
G_D(-D))=0$. Therefore $\nabla_{can}-\nabla^1_D \in H^0(D,
G_D\otimes \Omega_D)$ lifts to a unique class $\gamma \in H^0(X, G
\otimes \Omega_X)$. Then $\nabla = \nabla ^1+\gamma$ is the
required connection of $E_1$.

Again we have a sequence of maps
$$H^0(X, G \otimes  F_X^*\Omega_X) \mathop{\longrightarrow} ^{\alpha_2}
H^0(D, G_D\otimes F_D^*(\Omega_X|_D))
\mathop{\longrightarrow}^{\beta _2}H^0(D, G_D\otimes
F_D^*\Omega_D)$$ mapping the $p$-curvature of $\nabla$ to the
$p$-curvature of $\nabla _{can}$ which is $0$.

Let us recall that by assumption $\Omega _X\hookrightarrow \O_X(\alpha H)^N$ for some integer $N$.
Therefore $G\otimes (F_X^*\Omega _X)(-D)\hookrightarrow G(p\alpha H-D)^N$
and since $(p\alpha H-D)H^{d-1}<0$ we have vanishing of $H^0(X, G \otimes  (F_X^*\Omega_X )(-D))$.
Since $F_D^*(\Omega_X|_D))=(F^*_X\Omega _X)_D$ this implies that the map $\alpha_2$ is injective.
Since
$$H^0(D, G\otimes F_D^*(\O_D(-D)))=H^0(D, G(-pD))=0,$$ the map
$\beta _2$ is injective.  This proves that the $p$-curvature of
$\nabla$ is equal to $0$ and hence by Cartier's descent there
exists a sheaf $E$ on $X$ such that $E_1=F_X^*E$ and $E_D\simeq
E'$. This contradicts our assumption.
\end{proof}

\medskip

\begin{Remark}
Let us note that we do not really need Theorem \ref{Lefschetz1}
in the proof of Theorem \ref{Lefschetz2}. First as above
we prove that for any $E'\in \Vect (D)$ there
exists $E\in \Vect (X)$ such that $E_D\simeq E'$. Then we can go
back to the proof of Theorem \ref{Lefschetz1}. Point (a) is proven
in the same way as before but now point (b) is much easier.
Namely, let $E'\subset E_D$ be a  subbundle of degree $0$ in the
restriction $E_D$ of $E\in \Vect(X)$. Then we can lift $E'$ to
some bundle $E''\in \Vect (X)$. But by (a) the restriction map
$${\Hom }_X(E'',E)\to {\Hom }_D(E',E_D)$$
is an isomorphism, so inclusion $E'\subset E_D$ can be lifted to
an inclusion $E''\subset E$, which finishes the proof of (b).
\end{Remark}
\medskip

The following corollary strengthens \cite[Theorem 1.1]{BH}. Note
that in their paper Biswas and Holla used Grothendieck's Lefschetz
theorem to prove this theorem.
In our case the corollary follows immediately from Theorems \ref{Lefschetz1}
and \ref{Lefschetz2} and the universal property of the fundamental group schemes
(see Lemma \ref{universal}).

\begin{Corollary} \textup{(Lefschetz theorem for Nori's and \'etale fundamental groups)}
Let $X$ be a smooth $d$-dimensional projective variety
defined over an algebraically closed field $k$ and let $H$ be an ample
divisor on $X$. Let $D\subset X$ be any ample smooth effective divisor.
\begin{enumerate}
\item Let us assume that  $d\ge 2$ and
$$DH^{d-1}>\mu_{\max}(\Omega _X).$$
Then $\pi_1^N (D,x)\to \pi_1^N(X,x)$ and $\pi_1^{Et} (D,x)\to
\pi_1^{Et} (X,x)$ are faithfully flat.
\item
Let us assume that $d\ge 3$ and $T_X(\alpha H)$ is globally
generated for some non-negative integer $\alpha$.  Let us also
assume that $D-\alpha H$ is ample and
$$DH^{d-1}>\max \left(p\alpha H^d, (d+1)\alpha H^d-{K_XH^{d-1}}\right) .$$
Then $\pi_1^N (D,x)\to \pi_1^N(X,x)$ and $\pi_1^{Et} (D,x)\to
\pi_1^{Et} (X,x)$ are isomorphisms.
\end{enumerate}
\end{Corollary}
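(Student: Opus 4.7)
The plan is to derive the corollary formally from Theorems \ref{Lefschetz1} and \ref{Lefschetz2} by invoking the universal property of $\pi_1^N$ and $\pi_1^{Et}$ recorded in Lemma \ref{universal}. Recall that $\pi_1^N(Y,y)$ is the inverse limit of the directed system of Zariski-dense representations of $\pi_1^S(Y,y)$ in finite $k$-group schemes, with a canonical faithfully flat quotient map $\pi_1^S(Y,y) \to \pi_1^N(Y,y)$; the analogous description holds for $\pi_1^{Et}$ upon replacing ``finite'' by ``finite \'etale.''

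For part (1), I would consider the commutative square whose rows are the natural maps $\pi_1^S(D,x) \to \pi_1^S(X,x)$ and $\pi_1^N(D,x) \to \pi_1^N(X,x)$ and whose columns are the canonical projections. The top arrow is faithfully flat by Theorem \ref{Lefschetz1}, and the right vertical arrow is faithfully flat by construction. Hence the composite $\pi_1^S(D,x) \to \pi_1^S(X,x) \to \pi_1^N(X,x)$ is faithfully flat; but this composite equals the one going down then across, i.e.\ it factors through the surjective map $\pi_1^S(D,x) \to \pi_1^N(D,x)$ followed by $\pi_1^N(D,x) \to \pi_1^N(X,x)$. Surjectivity (faithful flatness) of this latter map then forces $\pi_1^N(D,x) \to \pi_1^N(X,x)$ to be faithfully flat. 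The identical argument, with ``finite \'etale group scheme'' in place of ``finite group scheme,'' yields the statement for $\pi_1^{Et}$.

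For part (2), Theorem \ref{Lefschetz2} makes the top arrow of the square an isomorphism, so the directed systems of Zariski-dense representations of $\pi_1^S(D,x)$ and $\pi_1^S(X,x)$ into finite (resp.\ finite \'etale) $k$-group schemes are canonically in bijection via the universal property. Passing to the inverse limit through Lemma \ref{universal}, this bijection upgrades to an isomorphism $\pi_1^N(D,x) \cong \pi_1^N(X,x)$ (resp.\ $\pi_1^{Et}(D,x) \cong \pi_1^{Et}(X,x)$). Since all the geometric content has already been packaged into Theorems \ref{Lefschetz1} and \ref{Lefschetz2}, no real obstacle is expected; the only bookkeeping is to verify that ``faithfully flat'' transfers correctly across the factorization, which is immediate because the vertical projections are themselves faithfully flat.
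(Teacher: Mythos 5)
Your proposal is correct and is essentially the paper's own argument: the paper likewise deduces the corollary "immediately from Theorems \ref{Lefschetz1} and \ref{Lefschetz2} and the universal property of the fundamental group schemes (see Lemma \ref{universal})", and your commutative-square bookkeeping (faithful flatness of the composite forcing faithful flatness of the bottom arrow, and the isomorphism of $\pi_1^S$ identifying the directed systems of finite, resp.\ finite \'etale, quotients) is exactly the verification the paper leaves implicit.
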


\medskip
In case of the local fundamental group of Nori, the
Grothendieck--Lefschetz type theorem was also proved in \cite{Me},
but without the precise bounds on the degrees of the
hypersurfaces.
\medskip

\begin{Corollary}\label{homogeneous}
Let $G$ be a reduced, connected linear algebraic group and let $X$
be a projective homogeneous $G$-space such that the
scheme-theoretic stabilizers of the action of $G$ on $X$ are
reduced. Assume that $X$ has dimension $\ge 3$. Then for any
smooth ample effective divisor  $D\subset X$ and any $k$-point
$x\in D$ the group   $\pi_1^{S} (D,x)$ is trivial.
In particular, if $D$ is a smooth hypersurface in $\PP
^d$, $d\ge 3$ then $\pi _1^S (D,x)=0$.
\end{Corollary}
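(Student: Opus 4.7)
The plan is to apply Theorem~\ref{Lefschetz2} in order to reduce the statement to $\pi_1^S(X,x)=0$, and then to establish the latter directly using the $G$-action on $X$.

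First, since $G$ is connected and the scheme-theoretic stabilizers are reduced, the infinitesimal action yields a surjection $\fg \otimes \O_X \twoheadrightarrow T_X$, so $T_X$ is globally generated. Consequently $\mu_{\max}(\Omega_X) \le 0$ and $-K_X = \det T_X$ is nef. This makes $\alpha = 0$ admissible in Theorem~\ref{Lefschetz2} and the hypothesis of Theorem~\ref{Lefschetz1} automatic for every pair of ample divisors $(D,H)$, in particular giving at least the surjectivity of $\pi_1^S(D,x)\to\pi_1^S(X,x)$.

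Next I would verify $\pi_1^S(X,x)=0$ by showing every $E \in \Vect(X)$ is trivial. The $G$-action on $X$ produces a family $\{g^*E\}_{g\in G}$ which is bounded by Theorem~\ref{refl-bound}; the rigidity of numerically flat bundles on $X$ (the moduli being discrete once $-K_X$ is nef and $T_X$ globally generated) combined with the connectedness of $G$ then forces all $g^*E$ to be isomorphic. A standard descent argument equips $E$ with a $G$-equivariant structure and realizes it via a representation of the reduced isotropy subgroup at $x$, and numerical flatness forces that representation to be trivial, giving $E\simeq \O_X^r$. Applying Theorem~\ref{Lefschetz2} with $\alpha=0$ and an ample polarization $H$ satisfying $D\cdot H^{d-1}>-K_X\cdot H^{d-1}$ then yields $\pi_1^S(D,x)\cong\pi_1^S(X,x)=0$.

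For the \emph{in particular} statement with $X=\PP^d$: a hyperplane is $\PP^{d-1}$ and triviality follows from Proposition~\ref{P^n}; for hypersurfaces of degree $\ge d+2$ the bound of Theorem~\ref{Lefschetz2} holds with $H=\O_{\PP^d}(1)$; intermediate degrees can be handled either by iterating Lefschetz through a chain of containing smooth hypersurfaces, or by a direct extension argument built on the vanishing results of Section~9 applied to the ambient $\PP^d$.

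The main obstacle is proving $\pi_1^S(X,x)=0$ in the stated generality. The $G$-equivariant descent has to be carried out carefully in positive characteristic (the gerbe obstruction must be controlled), and one must verify that numerical flatness genuinely forces triviality of the associated isotropy representation even when the stabilizer is not reductive; this is the delicate heart of the argument.
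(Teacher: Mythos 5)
Your reduction follows the same route as the paper for the Lefschetz step: reducedness of the stabilizers gives a surjection $\fg\otimes\O_X\twoheadrightarrow T_X$, so $T_X$ is globally generated, one takes $\alpha=0$ in Theorem \ref{Lefschetz2}, and the problem becomes $\pi_1^S(X,x)=0$. Where you diverge is in this last step, and that is where the genuine gap lies. The paper disposes of it in one line by citing Mehta--Nori \cite[Theorem 1]{MN}, which is precisely the statement that on a projective homogeneous space with reduced stabilizers every semistable sheaf with $c_1\cdot H^{d-1}=c_2\cdot H^{d-2}=0$ is trivial. Your attempted direct proof does not close: (i) boundedness of $\{g^*E\}_{g\in G}$ plus connectedness of $G$ does not force all $g^*E$ to be isomorphic --- the locus $\{g\in G: g^*E\simeq E\}$ in a flat family is in general neither open nor closed, and the ``discreteness of the moduli'' of numerically flat bundles that you invoke to get around this is essentially equivalent to the triviality statement you are trying to prove, so the argument is circular as written; (ii) even granting $g^*E\simeq E$ for all $g$, promoting this to a $G$-equivariant structure requires killing a cocycle obstruction, and the remaining work --- showing that numerical flatness forces the resulting representation of the parabolic isotropy group to be an iterated extension of trivial characters, and that these extensions then split because $H^1(X,\O_X)=0$ --- is exactly the content of \cite{MN}. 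You correctly flag this as the ``delicate heart,'' but it cannot be left as a standard descent argument; it must either be carried out in full or cited.

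Two smaller points. First, you state the numerical hypothesis of Theorem \ref{Lefschetz2} with $\alpha=0$ as $D\cdot H^{d-1}>-K_X\cdot H^{d-1}$ but do not check that an ample $H$ satisfying it exists; since $-K_X$ is nef here, this amounts to $(D+K_X)\cdot H^{d-1}>0$, which genuinely fails for small $D$ (e.g.\ hypersurfaces of degree $\le d+1$ in $\PP^d$). The paper's own proof glosses over this as well, so your worry about low-degree hypersurfaces in the ``in particular'' clause is not misplaced; but the fix you sketch (``iterating Lefschetz through a chain of containing smooth hypersurfaces'') does not make sense as stated, since a low-degree hypersurface of $\PP^d$ is not a divisor on a higher-degree one. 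The paper itself only adds a reference to Proposition \ref{P^n} for the projective-space case.
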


\begin{proof}
We can take $\alpha =0$ in the above theorem so that we get an isomorphism
$\pi_1^{S} (D,x)\simeq \pi_1^S (X,x)$. But by \cite[Theorem 1]{MN} the S-fundamental
group scheme of $X$ is trivial, which proves the first assertion.
The last assertion also follows from Proposition \ref{P^n}.
\end{proof}

\section{Lefschetz type theorems in presence of lifting modulo $p^2$
and in characteristic zero} \label{Section-p^2}

We fix the following notation. Let $X$ be a smooth
$d$-dimensional complete variety defined over a perfect field $k$
of characteristic $p>0$. We assume throughout that $X$ has a lifting
to $W_2(k)$. Under this assumption Deligne and Illusie (and Raynaud)
showed in \cite{DI} that the Kodaira vanishing theorem is still valid
in positive characteristic. We can use their method to give stronger
Lefschetz type theorems for varieties with lifting modulo $p^2$.

\medskip

Let us recall the following lemma which is a small variation of \cite[Lemma 2.9]{DI}
(to simplify exposition we avoid the log version):

\begin{Lemma}
For any locally free sheaf $E$ and an integer $l<p$ we have
$$\sum_{i+j=l}h^j(X, E\otimes \Omega^i_X)\le \sum_{i+j=l}h^j(X, F^*E\otimes \Omega^i_X).$$
\end{Lemma}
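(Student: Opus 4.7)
The plan is to adapt the Deligne--Illusie argument \cite{DI} by introducing the coefficient sheaf $E$. Consider the complex
\[
K^\bullet := F_*\bigl(F^*E \otimes \Omega^\bullet_X\bigr)
\]
on $X$, whose differentials are induced by the canonical (Cartier) connection $\nabla_{\mathrm{can}}$ on $F^*E$ (and are $\O_X$-linear after the pushforward). The twisted Cartier isomorphism yields $\mathcal{H}^i(K^\bullet)\simeq E \otimes \Omega^i_X$. The central input is a twisted version of the Deligne--Illusie decomposition: under the assumption that $X$ lifts to $W_2(k)$, there is a quasi-isomorphism in $D^b(X)$
\[
\bigoplus_{i<p} E \otimes \Omega^i_X[-i] \;\simeq\; \tau_{<p}K^\bullet.
\]
The quasi-isomorphism constructed in \cite{DI} for $E=\O_X$ is built locally from a lift of Frobenius over a $W_2$-lift of $X$; I would extend it verbatim by tensoring every local splitting with $F^*E$ equipped with $\nabla_{\mathrm{can}}$, the point being that the splittings are $\O_X$-linear on the coefficient factor so no frame of $E$ is needed for gluing.

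Granted the decomposition, the hypercohomology of $K^\bullet$ in degree $l<p$ satisfies
\[
\dim \mathbb{H}^l(X, K^\bullet) \;=\; \dim \mathbb{H}^l(X, \tau_{<p}K^\bullet) \;=\; \sum_{i+j=l} h^j(X, E \otimes \Omega^i_X),
\]
where the first equality uses the triangle $\tau_{<p}K^\bullet \to K^\bullet \to \tau_{\ge p}K^\bullet \to$ together with the fact that $\tau_{\ge p}K^\bullet$, having cohomology sheaves concentrated in degrees $\ge p$, has vanishing hypercohomology in degrees $<p$. On the other hand, the Hodge-to-de Rham spectral sequence of $K^\bullet$ has first page $E_1^{i,j}= H^j(X, F^*E\otimes \Omega^i_X)$ (using that $F$ is affine, so $F_*$ preserves cohomology), and its convergence to $\mathbb{H}^l(X,K^\bullet)$ supplies the inequality
\[
\dim \mathbb{H}^l(X, K^\bullet) \;\le\; \sum_{i+j=l} h^j(X, F^*E \otimes \Omega^i_X).
\]
Chaining the equality and the inequality produces the lemma.

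The main obstacle I anticipate is the bookkeeping required to verify the twisted decomposition: one has to unwind the local construction of \cite[\S2]{DI}, tensor every intermediate sheaf with $F^*E$, and confirm that the comparison map on overlaps is defined via $\nabla_{\mathrm{can}}$ and thus glues globally. The hypothesis $l<p$ is essential and enters precisely through the truncation $\tau_{<p}$, which is why the lemma is stated only in that range.
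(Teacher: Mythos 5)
Your argument is correct and is exactly the intended one: the paper offers no proof of this lemma, referring instead to Deligne--Illusie, and your proof is precisely their proof of \cite[Lemme 2.9]{DI} (the complex $K^\bullet$ you write down is, via the projection formula, $E'\otimes F_*\Omega^\bullet_X$ for $E'$ the descent of $F^*E$ to the Frobenius twist $X'$). The only simplification worth noting is that the ``twisted decomposition'' does not require reworking the local construction of \cite{DI}: since the Deligne--Illusie quasi-isomorphism $\bigoplus_{i<p}\Omega^i_{X'}[-i]\to\tau_{<p}F_*\Omega^\bullet_X$ is a map of complexes of $\O_{X'}$-modules and $E'$ is locally free, one obtains the twisted statement by simply applying the exact functor $E'\otimes_{\O_{X'}}(-)$, which commutes with truncation.
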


The above lemma allows to obtain, in presence of lifting, strong
vanishing theorems for numerically flat bundles:

\begin{Corollary} \label{vvan}
For any ample divisor $D$ and any $E\in \Vect (X)$ we have
$$H^j(X,  E(-D)\otimes \Omega^i_X)=0$$
if $i+j<\min (p, d).$
\end{Corollary}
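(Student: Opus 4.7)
The strategy is to iterate the preceding Deligne--Illusie type lemma and reduce to Serre vanishing on a bounded family of Frobenius pullbacks. Fix $l = i_0 + j_0 < \min(p, d)$. Since $E \in \Vect(X)$ is locally free (by Proposition \ref{num-nef}), so is $E(-D)$, and the hypothesis $l < p$ allows me to apply the preceding lemma with this sheaf in the role of $E$, yielding
$$\sum_{i+j=l} h^j(X, E(-D) \otimes \Omega^i_X) \le \sum_{i+j=l} h^j(X, F^*E(-pD) \otimes \Omega^i_X).$$
Because Frobenius pullback preserves numerical flatness, $F^*E$ is again in $\Vect(X)$, so the lemma can be reapplied to the locally free sheaf $F^*E(-pD)$. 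Iterating $m$ times, I obtain
$$\sum_{i+j=l} h^j(X, E(-D) \otimes \Omega^i_X) \le \sum_{i+j=l} h^j(X, (F^m)^*E(-p^m D) \otimes \Omega^i_X).$$

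Next I would exploit that the family $\{(F^m)^*E\}_{m \ge 0}$ is bounded: each member is numerically flat of the same rank as $E$, hence lies in $\cT_{X/k}(r,0,0;0)$, and boundedness is supplied by Theorem \ref{refl-bound}. For each fixed $i$, the family $\{(F^m)^*E \otimes \Omega^i_X\}_m$ is then also bounded, and uniform Serre vanishing (applied to the ample twist $\O_X(p^m D)$ via Serre duality) produces an integer $N$ such that
$$H^j(X, (F^m)^*E \otimes \Omega^i_X \otimes \O_X(-p^m D)) = 0$$
for every $j < d$, every $0 \le i \le l$, and every $m \ge N$. Fixing any such $m$ kills the right-hand side of the iterated inequality, and since every summand on the left is non-negative, each term, in particular $h^{j_0}(X, E(-D) \otimes \Omega^{i_0}_X)$, must vanish.

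The main obstacle is that the preceding lemma yields only an inequality of sums rather than of individual cohomology dimensions, so one needs vanishing of every $H^j$ with $i+j = l$ on the right-hand side simultaneously. This is precisely what the hypothesis $l < \min(p, d)$ delivers: the clause $l < p$ is exactly what permits the iteration through the lemma, while $l < d$ forces $j < d$ for every summand, so that Serre duality converts a positive-twist Serre vanishing statement on the bounded family $\{((F^m)^*E)^\vee \otimes \Omega^{d-i}_X \otimes \omega_X\}_m$ into the required vanishing with the negative twist $\O_X(-p^m D)$.
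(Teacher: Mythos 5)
Your proposal is correct and follows essentially the same route as the paper: iterate the Deligne--Illusie lemma along the Frobenius tower $E(-D), F^*E(-pD), \dots, (F^m)^*E(-p^mD)$, then kill the right-hand side for large $m$ by uniform Serre vanishing on the bounded family $\{(F^m)^*E\}_m$, using non-negativity of each summand to conclude. Your explicit accounting of why $l<p$ is needed for the iteration and $l<d$ for the Serre-duality step is a useful elaboration of what the paper leaves implicit, but it is not a different argument.
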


\begin{proof}
Let us note that since the family $\{(F^l)^*E\}$ is bounded we
have for  large $l$
$$H^j(X, (F^l)^*E(-p^{l}D)\otimes \Omega^i_X)=0.$$
Therefore the assertion follows by induction from the lemma
applied to the sheaves $(F^{l-1})^*E(-p^{l-1}D)$,
$(F^{l-2})^*E(-p^{l-2}D)$, $\dots, E(-D)$.
\end{proof}

\begin{Theorem}
Let $D$ be any smooth ample effective divisor on $X$.
\begin{enumerate}
\item
If $d\ge 2$ then  $\pi_1^{S} (D,x)\to \pi_1^{S} (X,x)$ is
faithfully flat.
\item
If $d\ge 3$ and $p\ge 3$ then $\pi_1^S (D,x)\to \pi_1^S(X,x)$ is
an isomorphism.
\end{enumerate}
\end{Theorem}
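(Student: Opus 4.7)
The plan is to mimic the proofs of Theorem \ref{Lefschetz1} and Theorem \ref{Lefschetz2}, but with the coarse numerical vanishing theorems of Section 9 replaced everywhere by the much stronger Corollary \ref{vvan}, which under the lifting hypothesis applies to an arbitrary smooth ample divisor without any further numerical constraint. The numerology of Corollary \ref{vvan} then dictates the hypotheses: for vanishings with $i+j\le 1$ one only needs $\min(p,d)\ge 2$, giving (1); for vanishings with $i+j=2$ one needs $\min(p,d)\ge 3$, giving (2).

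For part (1), by \cite[Proposition 2.21 (a)]{DM} I would check: (a) the functor $\Vect(X)\to \Vect(D)$ is fully faithful, and (b) for every $E\in \Vect(X)$ every numerically flat subbundle of $E_D$ lifts to a subbundle of $E$. Full faithfulness reduces via the sequence
$$0\to {\cHom}(E_1,E_2)(-D)\to {\cHom}(E_1,E_2)\to {\cHom}(E_1,E_2)|_D\to 0$$
to vanishing of $H^0$ and $H^1$ of the twisted internal $\cHom$, which lies in $\Vect(X)$; both follow from Corollary \ref{vvan} since $i+j\le 1<\min(p,d)$. For (b) I would run the same inductive lifting of a Jordan--H\"older filtration as in the proof of Theorem \ref{Lefschetz1}; the slope bound $\mu_{\max}(\Omega_X(-D))<0$ is used there only to conclude $\Hom(F_1,F_2\otimes \Omega_X(-D))=0$ for $F_1,F_2\in \Vect(X)$, and in our setting this is again supplied by Corollary \ref{vvan}, applied with $(i,j)=(1,0)$.

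For part (2) one additionally needs, by \cite[Proposition 2.21 (b)]{DM}, that every $E'\in \Vect(D)$ extends to some $E\in \Vect(X)$. I would follow the extension-and-descent strategy of Theorem \ref{Lefschetz2}: boundedness of $\{(F_D^m)^*E'\}$ together with the obstruction lemma for extending bundles through infinitesimal thickenings of $D$ and Grothendieck's effective Lefschetz condition produces, for some large $m$, an extension $E_m\in \Vect(X)$ of $(F_D^m)^*E'$; then one Frobenius-descends $E_m\mapsto E_{m-1}\mapsto\dots\mapsto E$. At each descent step one must build a connection on $E_i$ matching the canonical connection on $(F_D^i)^*E'$ and check that its $p$-curvature vanishes. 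The obstructions to constructing the connection (Atiyah class, matching on $D$) reduce to vanishings of $H^i(X,\End(E_i)\otimes \Omega_X^j(-D))$ for $i,j\le 1$, together with $H^2(X,\End(E_i)(-2D))=0$ via the sequence $0\to G(-2D)\to G(-D)\to G_D(-D)\to 0$; all of these follow from Corollary \ref{vvan} as soon as $\min(p,d)\ge 3$.

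The main obstacle, as in the original proof of Theorem \ref{Lefschetz2}, will be the $p$-curvature step: the obstruction lives in $H^0(X,\End(E_i)\otimes F_X^*\Omega_X(-D))$, a cohomology group involving $F_X^*\Omega_X$ rather than $\Omega_X$, and hence not directly covered by Corollary \ref{vvan}. In the earlier proof this was handled by explicit slope estimates using an embedding $\Omega_X\hookrightarrow \O_X(\alpha H)^N$, but here no such numerical input is assumed. To overcome this I would exploit the lifting hypothesis once more, either by pushing the Deligne--Illusie lemma of Section \ref{Section-p^2} through a Frobenius twist of $\Omega_X$, or by invoking the underlying decomposition $\tau_{<p}F_{X*}\Omega^\bullet_X\simeq \bigoplus_{i<p}\Omega^i_X[-i]$ to reduce the needed vanishing to the $\Omega^i$-twists already controlled by Corollary \ref{vvan}. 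Once the $p$-curvature is shown to vanish, Cartier's descent produces $E_{i-1}$ with $F_X^*E_{i-1}\simeq E_i$, and iterating $m$ times yields the required extension $E\in \Vect(X)$ with $E_D\simeq E'$, completing the argument.
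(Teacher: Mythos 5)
Your strategy is the one the paper itself uses: its entire proof of this theorem is the remark that one follows the proofs of Theorems \ref{Lefschetz1} and \ref{Lefschetz2}, with every cohomology vanishing now supplied by Corollary \ref{vvan}. Your bookkeeping for part (1) and for most of part (2) is correct: full faithfulness, the Jordan--H\"older lifting, the Atiyah class, and the matching of $\nabla$ with $\nabla_{can}$ on $D$ all reduce to groups of the form $H^j(X,G\otimes\Omega_X^i(-D'))$ with $G\in\Vect(X)$, $D'\in\{D,2D\}$ ample and $i+j\le 1$ (resp.\ $i+j\le 2$), which is exactly what $\min(p,d)\ge 2$ (resp.\ $\ge 3$) buys. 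You also correctly route the vanishings on $D$ (e.g.\ $H^1(D,G_D(-D))=0$) through the sequence $0\to G(-2D)\to G(-D)\to G_D(-D)\to 0$ on $X$ rather than applying Corollary \ref{vvan} on $D$ itself, which matters since $D$ is not assumed to lift to $W_2(k)$.

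The one place your proposal is not closed is the one you flag yourself: the injectivity of $\alpha_2$, i.e.\ $H^0(X,\End E_1\otimes F_X^*\Omega_X(-D))=0$, which is what kills the $p$-curvature and permits Cartier descent. This is not an instance of Corollary \ref{vvan}, because the twist is by the Frobenius \emph{pull-back} $F_X^*\Omega_X$ and not by $\Omega_X^i$; in Theorem \ref{Lefschetz2} this vanishing is precisely what the hypothesis $DH^{d-1}>p\alpha H^d$ is for, via $F_X^*\Omega_X\hookrightarrow\O_X(p\alpha H)^N$, and no such bound is available here. Neither of your suggested remedies works as stated: the Deligne--Illusie splitting concerns $F_{X*}\Omega_X^{\bullet}$ and only compares $H^j(E\otimes\Omega_X^i)$ with $H^j(F^*E\otimes\Omega_X^i)$, while adjunction turns the obstruction group into $\Hom(T_X,F_*(\End E_1(-D)))$, which is not excluded by slope considerations when $\mu_{\max}(\Omega_X)$ is large relative to $DH^{d-1}$. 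So the descent step $E_1\simeq F_X^*E$ is not justified in your write-up. To be fair, the paper's phrase ``without changes'' elides exactly the same point, so you have at least correctly located the only genuinely delicate step; but as it stands this is a gap, and some additional input (beyond Corollary \ref{vvan}) is needed to dispose of the $p$-curvature.
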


\begin{proof}
Using the above corollary one can follow the proofs of Theorems
\ref{Lefschetz1} and \ref{Lefschetz2} without changes (except for
the fact that vanishing of cohomology groups is much simpler).
\end{proof}

\medskip

Clearly, we get the same result also for Nori and \'etale
fundamental groups.

\bigskip

Now let $X$ be a complex projective manifold. Using Lefschetz theorems for the topological
fundamental group and the universal property of S-fundamental group scheme we get the following
theorem:

\begin{Theorem}
Let $D$ be any smooth ample effective divisor on $X$.
\begin{enumerate}
\item
If $d\ge 2$ then  $\pi_1^{S} (D,x)\to \pi_1^{S} (X,x)$ is
faithfully flat.
\item
If $d\ge 3$ then $\pi_1^S (D,x)\to \pi_1^S(X,x)$ is
an isomorphism.
\end{enumerate}
\end{Theorem}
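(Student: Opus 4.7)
The plan is to reduce the theorem to the classical Lefschetz hyperplane theorem for the topological fundamental group, via Simpson's correspondence (Theorem \ref{Simp}). Under that equivalence, $\Vect(X)$ is identified, through the monodromy representation, with the full subcategory ${\cal R}(X,x)$ of finite dimensional complex representations of $\pi_1(X,x)$ that are iterated extensions of unitary representations; the same description works on $D$, and restriction of vector bundles corresponds to pullback of representations along the natural map $\iota_*: \pi_1(D,x) \to \pi_1(X,x)$. By the classical Lefschetz hyperplane theorem, $\iota_*$ is surjective for $d\ge 2$ and an isomorphism for $d\ge 3$, so it remains to translate these statements into properties of the induced homomorphism of Tannaka duals through \cite[Proposition 2.21]{DM}.

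For part (1), I would check the two conditions of \cite[Proposition 2.21 (a)]{DM}. Full faithfulness of ${\cal R}(X,x)\to {\cal R}(D,x)$ follows because any $\pi_1(D,x)$-equivariant morphism between two $\pi_1(X,x)$-representations is automatically $\pi_1(X,x)$-equivariant once $\iota_*$ is surjective. For the lifting of subobjects, a $\pi_1(D,x)$-stable subspace $V'\subset V$ is automatically $\pi_1(X,x)$-stable, and $V'$ still belongs to ${\cal R}(X,x)$ because a subrepresentation of a unitary representation is unitary (the invariant Hermitian form restricts), and an easy induction on the length of a filtration with unitary quotients shows that ${\cal R}(X,x)$ is closed under subobjects in the ambient category of all finite dimensional complex representations of $\pi_1(X,x)$.

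For part (2) I need, in addition, the criterion of \cite[Proposition 2.21 (b)]{DM}: every object of ${\cal R}(D,x)$ should be a subquotient of the restriction of some object of ${\cal R}(X,x)$. When $d\ge 3$, $\iota_*$ is an isomorphism, so any $W\in {\cal R}(D,x)$ transports to a representation of $\pi_1(X,x)$ on the same underlying space; being an iterated extension of unitary representations is preserved under transport along a group isomorphism, so this transported representation lies in ${\cal R}(X,x)$ and its pullback to $D$ returns $W$.

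There is no real obstacle: the argument is a short bookkeeping exercise once one has Simpson's equivalence in hand, and all the genuine analytic input is hidden inside Theorem \ref{Simp}. The only mildly nontrivial point is verifying that ${\cal R}(X,x)$ is stable under subrepresentations, which is handled by the restriction-of-Hermitian-form remark above.
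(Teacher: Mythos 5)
Your argument is correct and takes essentially the same route the paper intends: the theorem is deduced from the topological Lefschetz hyperplane theorem via Simpson's equivalence (Theorem \ref{Simp}) between $\Vect(X)$ and iterated extensions of unitary representations of $\pi_1(X,x)$, with the Tannakian translation carried out through \cite[Proposition 2.21]{DM}. Your check that this subcategory of representations is closed under subobjects (and transports along the isomorphism $\pi_1(D,x)\simeq\pi_1(X,x)$ when $d\ge 3$) just fills in the bookkeeping the paper leaves implicit.
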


Let us note that a similar theorem holds also for the universal
complex pro-algebraic group $\pi_1^a (X,x)$. We sketch now an
algebraic proof (in 2 we assume that $d\ge 4$).

\begin{proof}
Manivel's vanishing theorem (see \cite[Theorem A]{Ma}) implies
that for any ample divisor $D$ and any $E\in \Vect (X)$ we have
$$H^j(X, E(-D)\otimes \Omega^i_X)=0$$
if $i+j<d$ (note that the proof by reducing to characteristic $p$
and using Corollary \ref{vvan} does not quite work as we do not
know if the reduction of $E$ modulo $p$ is still in $\Vect (X)$
for some $p$). Therefore we can also give an algebraic proof of
the above Lefschetz type theorem following the proofs of Theorems
\ref{Lefschetz1} and \ref{Lefschetz2} (replacing the Frobenius
morphism with identity). In this case, in proof of Theorem
\ref{Lefschetz2}, we cannot use the Frobenius morphism to extend
$E_D$ from the divisor $D$ to $X$. But by the above vanishing
theorem we have
$$H^2(D, \End E_D \otimes \O_D(-iD))=0$$
for $i>0$. This allows us to extend $E_D$ to a vector bundle on the formal completion of $X$ along $D$
and then we can go back to the proof.
\end{proof}

Note that the above proof works only if $d\ge 4$ (as with
Grothendieck's proof of Lefschetz theorem for the Picard group).
If $d=3$ then, as one can see using Serre's duality, the above
obstruction space is never equal to zero for large $i$.
Nevertheless, in positive characteristic we could go around this
problem.

\bigskip

{\bf Acknowledgements.}

The author was partially supported by a Polish KBN grant (contract
number NN201265333).

\nocite{BaK}

\bibliography{bibl}{}
\bibliographystyle{plain}

\end{document}